\documentclass[11pt,a4paper,reqno]{amsart}
\usepackage[utf8]{inputenc}
\usepackage{a4wide}
\usepackage{graphicx}
\usepackage{amsmath}
\usepackage{amsfonts}
\usepackage{amsthm, enumerate}
\usepackage{comment}
\usepackage[foot]{amsaddr}

\newtheorem{theorem}{Theorem}[section]
\newtheorem*{theoremNN}{Theorem}
\newtheorem{proposition}[theorem]{Proposition}
\newtheorem{lemma}[theorem]{Lemma}
\newtheorem{corollary}[theorem]{Corollary}

\newtheorem{assumption}{Assumption}
\newtheorem{remark}[theorem]{Remark}

\usepackage[colorlinks=true,linkcolor=blue,urlcolor=blue, citecolor=blue]{hyperref}

\title[An Euler scheme for McKean SDEs with Besov drift]{An Euler scheme for McKean SDEs with Besov drift: convergence rate and implementation}

\author{Luis Mario Chaparro J\'aquez$^{1}$}
\author{Elena Issoglio$^{2}$}
\author{Jan Palczewski$^{3}$}
\address{$^1$School of Mathematics, University of Leeds, Leeds, LS2 9JT, United Kingdom}
\address{$^2$Dept. of Mathematics ``G. Peano'', University of Turin, Via Carlo Alberto 10, 10123, Torino, Italy}
\address{$^3$Faculty of Mathematics, Wroc\l{}aw University of Science and Technology, Wybrze\.{z}e Wyspia\'{n}skiego 27, 50-370, Wroc\l{}aw, Poland}
\email{$^1$academic@lmcj.xyz}
\email{$^2$elena.issoglio@unito.it}
\email{$^3$j.palczewski@pwr.edu.pl}

\usepackage[backend=biber, style=alphabetic, url=false, isbn=false, doi=false, maxbibnames=4]{biblatex}
\renewbibmacro{in:}{}

\usepackage{xcolor}
  \definecolor{leedsGreen}{RGB}{0, 80, 47}
  \definecolor{leedsRed}{RGB}{196, 18, 48}
  \definecolor{leedsBlack}{RGB}{0, 0, 0}
  \definecolor{leedsCream}{RGB}{246, 241, 228}

\usepackage{bbm} 

\usepackage{soul} 

\newcommand{\figtitle}[1]{{\small\textbf{#1}}\par\smallskip}
\newcommand{\figsubtitle}[1]{{\scriptsize#1\par\smallskip}}

\newcommand{\holder}[1]{$#1$\nobreakdash-H\"older} 
\newcommand{\cb}{c_b}
\def \lbracket{(}

\newcommand\cF{\mathcal{F}}
\renewcommand\P{\mathbb{P}}
\newcommand\E{\mathbb{E}}

\DeclareMathOperator\sgn{sgn} 
\DeclareMathOperator*{\esssup}{ess\,sup}

\allowdisplaybreaks

\begin{document}

\begin{abstract}
We study a one-dimensional McKean-Vlasov stochastic differential equation (SDE) with a drift equal to a product of a distribution depending on the state of the process and a non-linear function depending pointwise on the law density of the solution. Building on recent well-posedness results, we propose the first implementable numerical scheme for this class of SDEs. Our approach combines mollification of the distributional drift with the Euler-Maruyama scheme and a PDE-based approximation of the law via the associated Fokker-Planck equation. We prove strong convergence of the scheme and derive an explicit rate, showing how to balance the smoothing parameter with the time discretisation. Numerical experiments confirm the applicability of our scheme and demonstrate the significant influence of the McKean interaction term on the law of the solution. 
\end{abstract}

\maketitle

\section{Introduction}
\subsection{Problem setting}
In this paper we study a McKean–Vlasov stochastic differential equation (SDE) of the form
\begin{equation}
  \begin{cases}
    X_{t} = X_{0} + \displaystyle\int_{0}^{t} F(\rho(s, X_{s})) b(s, X_{s}) ds + W_t\\
    \rho(t, \cdot) \text{ is the law density of } X_t \text{ with } \rho(0, \cdot) = \rho_0,
  \end{cases}
  \label{eq:MVSDEInit}
\end{equation}
where $W$ is a one-dimensional Brownian motion, $\rho_0$ is a given density of $X_0$, $F$ is a non-linear function, and $b$ is not a function but a distribution belonging to $C([0,T]; C^{-\hat\beta})$ with $\hat\beta \in(0,\frac12)$. 

The theoretical analysis of \eqref{eq:MVSDEInit} in this singular case (in any dimension $d$) has been recently addressed in \cite{issoglioMcKeanSDEsSingular2023}, under the assumptions that $\rho_0 \in C^{\alpha}$ for some $\alpha > \hat\beta$ and that $F$ is a bounded function with suitable regularity: $F$ and $\tilde F(z) = zF(z)$ have bounded and Lipschitz continuous derivatives. A weak solution is constructed via a martingale problem, and existence and uniqueness are shown. In the one-dimensional setting considered here, it is also possible to construct the solution on a given probability space with a fixed Brownian motion and a fixed random initial condition (see Theorem~\ref{thm:strong}). This makes it possible to study strong approximations of $X$.

Analytical solutions to McKean SDEs are rare, so numerical methods are needed in most instances. A natural and often encountered way to numerically solve a McKean SDE is to use a particle system approximation, i.e., employ propagation of chaos results and simulate a cloud of particles instead of the limiting equation. The majority of contributions in this direction assume at least locally Lipschitz coefficients and the dynamics depending sufficiently smoothly on the law of the solution, see e.g. \cite{bossyStochasticParticleMethod1997, dosreisImportanceSamplingMcKeanVlasov2018, dosreisSimulationMcKeanVlasov2022, liStrongConvergenceEuler2023}. Notable exceptions are \cite{baoApproximationsMcKeanVlasov2022} and \cite{leobacherWellposednessNumericalSchemes2022}. The first paper deals with McKean SDEs with time-homogeneous coefficients that are Lipschitz continuous in the measure variable but are required only to be Hölder continuous in space. In the second paper, the McKean SDE is one dimensional but it has a discontinuous drift (with certain structural assumptions on the measure-dependence of the drift) and a diffusion coefficient that is non-negative at the discontinuity points of the drifts. An Euler-type particle approximation makes use of a transformation to remove the singularity in the drift increasing the strong convergence order from $1/9$ for the particle approximation of the original McKean SDE to $1/2$ for the one with transformed drift. We emphasise that none of these approaches applies to McKean SDEs with distributional drifts and pointwise dependence on the law density.

\subsection{Contributions}
Although an adaptation of particle approximation is possible in our distributional setting, we pursue another (rather natural) way to simulate McKean SDEs. It consists in first solving (numerically) the Fokker-Planck equation for the law of the SDE (or its density) and then inserting it into the McKean SDE which then becomes a standard SDE. This leads to an {\em implementable} numerical scheme for \eqref{eq:MVSDEInit}. We provide its theoretical {\em strong rate of convergence} and investigate its {\em empirical} performance. To the best of our knowledge, this is the first work addressing numerical approximation of McKean SDEs with distributional coefficients.

Our scheme consists of two steps. First, we regularise the distributional drift $b$ by mollification, yielding a smooth function $b^N$, where the parameter $N$ is inversely proportional to the degree of smoothing. Second, we apply the standard Euler–Maruyama scheme to the SDE \eqref{eq:MVSDEInit} in which $b$ is replaced by $b^N$. While the convergence rate of the Euler-Maruyama scheme does not depend on $N$, the associated constant explodes as $N \to \infty$, which implies that the strong error also explodes unless $N$ is appropriately balanced with the time step. We show how to couple $N$ and the time discretisation to obtain strong convergence of the combined numerical scheme and we compute the corresponding rate.

A similar two-step approach has already been used in the literature on numerical approximation of non-McKean SDEs with a distributional drift, i.e., $F \equiv 1$. 
It is natural as numerical schemes like Euler–Maruyama are not directly applicable since the distributional drift cannot be evaluated pointwise. We are aware of only three papers that have treated such approximations: \cite{deangelisNumericalSchemeStochastic2022, chaparro2023}, which focus on one-dimensional SDEs driven by Brownian motion, and \cite{goudenegeNumericalApproximationSDEs2022}, which studies $d$-dimensional SDEs driven by fractional Brownian motion. In \cite{deangelisNumericalSchemeStochastic2022} the drift is a $(\frac12+)$-H\"older continuous function of time with values in a fractional Sobolev space of negative order $H^{-\beta}_{q, \tilde q}$ with $\beta\in(0,1/4)$. In \cite{chaparro2023} the drift is $\frac12$-H\"older continuous in time with values in a negative H\"older space $C^{-\beta}$ with $\beta\in(0,1/2)$. The paper \cite{goudenegeNumericalApproximationSDEs2022} considers a time-homogeneous drift $b$ in a negative Besov space $B^{-\beta}_\infty$ for $0 < \beta \le 1/(2H) -1$ with the Hurst coefficient $H < 1/2$, i.e., SDEs driven by the standard Brownian motion are not covered. As $H \uparrow 1/2$, the admissible range for $\beta$ shrinks to zero, further distinguishing our setting from theirs. 

While our regularisation approach is conceptually similar to those used in \cite{deangelisNumericalSchemeStochastic2022, chaparro2023}, the key difference is the McKean interaction term $F(\rho)$ in the drift, which introduces additional complexity. In particular, the law density $\rho$ is not known a priori but must be computed alongside the process. It is shown in \cite{issoglioMcKeanSDEsSingular2023} that this density exists and it satisfies the Fokker-Planck equation. However, this PDE involves distributional coefficients and cannot be solved numerically. Instead, we build our scheme around the solution $X^N$ of \eqref{eq:MVSDEInit} with $b$ replaced by its mollification $b^N$. The density $\rho^N$ of $X^N$ solves the Fokker-Planck equation with smooth coefficients. This PDE can be solved using standard methods. 

Assuming first that $\rho^N$ is known exactly, we determine the relationship between the mollification level N and the Euler time step needed to ensure strong convergence, and we compute the corresponding rate. In practice, only an approximate solution $\hat\rho^N$ to the Fokker-Planck equation is available. Therefore, we quantify how the numerical error in $\hat\rho^N$ (as a function of $N$) must be controlled in order for the convergence rate of the scheme to remain unaffected. 
This decoupling is motivated by the fact that $\rho^N$ is computed only once, independently of the number of trajectories simulated for $X^N$. Hence, it is not appropriate to balance the computational cost of computing $\rho^N$ and the cost of simulating an Euler sample path. This highlights the key feature of our approach that the density $\rho^N$ of the solution is not estimated via Monte Carlo sampling but through direct numerical solution of a PDE -- often a more efficient strategy in one dimension. In higher dimensions, strong solutions of \eqref{eq:MVSDEInit} with distributional drift are not known to exist, so the main motivation to approximate the law of $X^N$ via Monte Carlo is not relevant in the present setting.

We conclude the paper with a detailed numerical study. The drift $b$ is taken to be time homogeneous. In the spatial variable $x$ it is given as the derivative of a trajectory of a fractional Brownian motion on $\mathbb{R}$ with Hurst index $H \in (1/2, 1)$, making it a singular but time-homogeneous distribution in $C^{H-1}$. We compute the empirical convergence rate of the scheme and observe that it significantly outperforms the theoretical worst-case bound, though this observation should be interpreted with caution. Since the exact solution is unavailable, we use a highly refined approximation as a reference. We also compare the empirical law of $X^N$ with the numerically computed $\hat\rho^N$ using the Kolmogorov–Smirnov goodness-of-fit test, which provides evidence for the accuracy of the approach. Finally, we investigate the influence of the McKean term $F(\rho)$ on the law of $X$ and show that the choice of $F$ can significantly affect the resulting density.

To summarise, the key contributions of this paper are: (i) a first numerical approximation scheme for McKean SDEs with distributional drift; (ii) derivation of the theoretical strong convergence rate; (iii) a practical implementation via PDE-based approximation of the density; and (iv) detailed empirical validation.

\subsection{Literature review}
We conclude the introduction with a more detailed review of related work concerning McKean SDEs.

McKean SDEs were popularised by Sznitman's Saint-Flour lectures \cite{Sznitman}. Their coefficients typically depend on the whole law of the unknown process, for example through its mean. They arise as limits of interacting particles when the number of particles tends to infinity. The dynamics of each particle is described by an SDE whose drift and/or diffusion depend on the empirical law of the cloud of particles. One can typically prove the propagation of chaos, namely that at the limit when the number of particles goes to infinity, each particle becomes independent and its dynamics is governed by its own law in place of the empirical measure. Other types of interaction have also been studied. Of particular interest to us is the case when the particles' dynamics do not depend on the empirical measure directly but on its convolution with a smooth kernel (known as moderate interaction). \cite{oelschlager_law_1985} proved that the propagation of chaos holds  for smooth enough coefficients  and leads to McKean SDEs with pointwise dependence on the law density of the process. This pointwise dependence is the one we consider in this paper: the drift at time $t$ depends on the density  of the process $\rho(t, \cdot)$  evaluated at the current position $X_t$. 

A natural and often encountered way to numerically solve a McKean SDE is to use a particle system approximation, i.e., employ propagation of chaos results and simulate a cloud of particles instead of the limiting equation. This is particuarly useful in higher dimensions because it partly avoids the curse of dimensionality, see \cite[Section 2.1]{ben_rached_double-loop_2024}. In an early contribution to the field -- \cite{bossyStochasticParticleMethod1997} -- the authors study a McKean equation with (smooth enough) coefficients depending linearly on the law. They simulate a system with $N$ particles and prove a convergence rate of order $1/2$ in time and $\sqrt N$ in the number of particles. In \cite{dosreisImportanceSamplingMcKeanVlasov2018} the assumptions on coefficients are relaxed to allow a superlinear growth in space, maintaining  Lipschitz conditions under the Wasserstein metric. The authors develop importance sampling algorithms based on particle system approximations for computing the expectations of functionals of solutions to the SDE. Under the assumptions of superlinear growth in space, Lipschitz continuity in the measure variable and 1/2-H\"older continuity in time, \cite{dosreisSimulationMcKeanVlasov2022} presents an explicit Euler scheme using interacting particle systems; this approximation is proved to be strongly convergent with the rate $1/2$ in time. This paper also shows an implicit Euler scheme but without obtaining a convergence rate. \cite{baoApproximationsMcKeanVlasov2022} deals with McKean SDEs with time-homogeneous coefficients that are Lipschitz continuous in the measure variable but are required only to be Hölder continuous in space (but the smoothness in space cannot be relaxed for the drift and diffusion coefficients simultaneously).  The authors consider an Euler scheme for the particle system and show its convergence to the solution of the original McKean SDE also providing explicit rates. The case of only locally Lipschitz condition in space for drift and diffusion coefficients is treated in  \cite{liStrongConvergenceEuler2023} where the coefficients are assumed to be globally Lipschitz in the measure variable and have linear growth uniformly in time. The authors prove convergence of a Euler-type scheme without providing a rate. 

Higher-order schemes have also been considered in the context of particle system approximations. Two works \cite{baoFirstorderConvergenceMilstein2021, kumarExplicitMilsteintypeScheme2021} appeared at the same time and studied the case of the drift with a superlinear growth in space. Under certain regularity assumptions they obtain a convergence rate of $1$ for a Milstein scheme.

A different line of extension is pursued in \cite{leobacherWellposednessNumericalSchemes2022} where the McKean SDE is one dimensional but with a discontinuous drift (with certain structural assumptions on the measure-dependence of the drift) and a diffusion coefficient that is non-negative at the discontinuity points of the drifts. The authors design an Euler-type particle approximation that makes use of a transformation to remove the singularity in the drift. Their scheme has a strong convergence rate $1/2$, while a direct time-discretisation of the particle system without the smoothing transformation is shown to be convergent with the order $1/9$.

\subsection{Organisation of the paper}

The paper is organised as follows. In Section \ref{sec:prel} we introduce notation and functional spaces in which we work. Section \ref{sec:statement} contains assumptions, a detailed explanation of the two-step numerical scheme and a statement of the main result on the rate of convergence and its interpretation. In Section \ref{sc:analytical} we collect auxiliary (existing and new) analytical results, while regularity properties of the density $\rho^N$ are derived in Section \ref{sec:rho}. The proof of the main result (Theorem \ref{thm:conv}) is presented in Section \ref{sec:main_result} for the case when the solution $\rho^N$ of the Fokker-Planck equation is exact, and extended in Section \ref{sec:rate_approx_PDE} for the case when the solution of the Fokker-Planck equation is approximated numerically (Theorem \ref{cor:conv}). Both proofs are based on bounds on the mollified drift $B^N := F(\rho^N) b^N$ obtained in Section \ref{sec:three_bounds}. Finally, Section \ref{sec:numerics} describes the numerical implementation of the scheme and explores its properties.

\section{Preliminaries}\label{sec:prel}
Given a function $f$ of $(t,x)$  we denote by $\partial_x f$ and $\partial_{xx} f$ its first  and second derivative with respect to $x$ respectively, with the same notation even  if $f$ depends only on $x$.

We denote by  $\mathcal S: = \mathcal S(\mathbb R)$ the  space of Schwartz functions and by $\mathcal S':= \mathcal S'(\mathbb R)$ the space  of Schwartz distributions on $\mathbb R$, and by $\hat \cdot $ and $\cdot^{\vee}$ the Fourier and inverse Fourier transforms on $\mathcal S$, extended to $\mathcal S'$ by duality. The notation for the sup norm is $\|f\|_{\infty} = \sup_{x \in \mathbb R} |f(x)|$ and $\|f\|_{L^\infty} = \esssup_{x \in \mathbb R} |f(x)|$.
Let  $C^{\gamma}:=C^{\gamma}(\mathbb R)$ denote the H\"older space (or Besov space when $\gamma<0$) of functions (or Schwartz distributions when $\gamma<0$) on $\mathbb R$, which is defined for any $\gamma\in\mathbb R$ as
\[
C^\gamma :=\{f \in \mathcal S' : \|f\|_{C^\gamma} := \sup_{j \geq -1 } 2^{j\gamma}\| (\phi_j \hat f)^{\vee}\|_{L^\infty} <\infty \},
\]
where $(\phi_j)_{j \in \mathbb{N}}$ is a dyadic partition of unity; for more details, see \cite[Chapter~5]{triebel2006theory} and \cite[Section 2.7]{bahouri_fourier_2011}. When  $\gamma \in \mathbb R\setminus \mathbb N$, these spaces coincide with the classical H\"older spaces. In particular, for $\gamma \in (0,1)$, denoting the   H\"older $\gamma$-seminorm by
\begin{equation}
  \label{eq:holder_seminorm}
  [f]_\gamma := \sup_{x \neq y, |x - y| < 1} \frac{|f(x) - f(y)|}{|x - y|^\gamma},
\end{equation}
then the H\"older $\gamma$-norm 
\begin{equation}
  \label{eq:holder_norm}
  \|f\|_{\gamma} 
  := \|f\|_{\infty} +  [f]_\gamma
\end{equation}
is equivalent to $\|f\|_{C^\gamma}$. Therefore, in what follows we will identify $\|\cdot\|_{C^\gamma}$ with $\|\cdot\|_\gamma$ for $\gamma \in (0,1)$.

For a Banach space $(E, \|\cdot \|_E)$, we denote by $C_T E$ the space $C([0, T]; E)$ of continuous functions on $[0, T]$ taking values in $E$ with the norm
\begin{equation*}
  \|f\|_{C_T E} := \sup_{t \in [0, T]} \|f\|_{E}.
\end{equation*}
Similarly, for $\kappa \in (0,1)$, we write $C^\kappa_T E$ for the space $C^\kappa([0, T]; E)$ of $\kappa$-H\"older continuous functions on $[0,T]$ taking values in $E$ with the following norm
\[
\|f\|_{C^\kappa_T E} := \|f\|_{C_T E} + [f]_{\kappa,E},
\]
where
\begin{equation*}
  [f]_{\kappa,E} 
 : = \sup_{t \in [0, T], t \neq s} \frac{\|f(t) - f(s)\|_{E}}{|t - s|^{\kappa}}.
\end{equation*}
Finally we denote by $L_T^\infty E$ the space of essentially bounded functions on $[0,T]$ with values in $E$ with the norm $\|\cdot \|_{L_T^\infty E}$.

Let  $(P_t)$ denote  the semigroup associated to the operator $\frac12 \partial_{xx}$ acting on  $\mathcal S$, that is for each $t>0$ we have 
\[
\begin{array}{llll}
P_t: & \mathcal S& \to &\mathcal S \\
 & \phi &\mapsto & \int_{\mathbb R} p_t (\cdot-z) \phi(z) dz
\end{array}
\]
where $p_t$ is the heat kernel given by $p_t(z) = (2 \pi t)^{-1/2} e^{-\frac{z^2}{2t}}$. The semigroup is $(P_t)$
 is extended in the usual way to $\mathcal S'$ by dual pairing, and denoted again by $(P_t)$. The heat semigroup acts well on the full scale of Besov spaces introduced above,  providing smoothing, as illustrated by the Schauder estimates below (Lemma \ref{lem:schauder}).

\section{Problem statement}\label{sec:statement}
We start by making the following assumptions, which will be in force throughout the paper. 

\begin{assumption}\label{ass:rho0}
Let $\rho_0 \in C^{1+\nu}$ for some $\nu>0$ and $\int_{-\infty}^\infty x^2 \rho_0(dx) < \infty$.
\end{assumption}

\begin{assumption}\label{ass:smallb}
  Let $\beta \in (0, 1/2)$ and $b \in C^{1/2}_TC^{-\hat\beta}$ for some $\hat \beta \in (0,\beta)$ satisfying $\beta - \hat\beta < \nu$. 
\end{assumption}

\begin{assumption}\label{ass:FF}
Let $F$ be a non-linear function and $x\mapsto\tilde F(x) := x F(x)$ be such that
\begin{itemize}
   \item $F$ is bounded and differentiable, and $\partial_x F$ is Lipschitz and bounded,
   \item $\tilde F$ is differentiable, and $\partial_x \tilde F$ is Lipschitz and bounded.
\end{itemize}
\end{assumption}

\begin{remark}\label{rem:beta}
The convergence rate will be expressed in terms of $\beta$ and the smaller this parameter is, the faster the numerical scheme will be shown to converge. The reader should think of $\beta$ as being close to $\hat \beta$. For technical reasons, we require that $\beta-\hat\beta < \nu \wedge (1-\beta)$ which is guaranteed by Assumption \ref{ass:smallb}: indeed $\beta - \hat \beta < 1/2 < 1 - \beta$, and the bound by $\nu$ is explicitly assumed. 

We remark that Assumption \ref{ass:smallb} is often written in an equivalent way: $b \in C^{1/2}_T C^{(-\beta)+}$, where $C^{1/2}_T C^{(-\beta)+} := \bigcup_{-\beta' > -\beta}  C^{1/2}_T C^{-\beta'}$.

{The square integrability condition in Assumption \ref{ass:rho0} is required to ensure the integrability of solution so that the strong error of our Euler approximation is well defined.}
\end{remark}

In order to solve the McKean-Vlasov SDE \eqref{eq:MVSDEInit}, a key tool is the associated Fokker-Planck equation 
\begin{equation}\label{eq:FPEqn}
  \begin{cases}
    \partial_t \rho = \frac12 \Delta \rho - \partial_x[\tilde{F}(\rho)b],\\
    \rho(0) = \rho_0.
  \end{cases}
\end{equation}
In this paper we understand the Fokker-Planck equation \eqref{eq:FPEqn} through its mild formulation, namely a function $\rho$ is a solution to \eqref{eq:FPEqn} if $\rho \in C_T C^\alpha$ for some $\alpha>\beta$ and it solves the integral equation
\begin{equation}\label{eq:mild_soln_fp}
  \rho(t) = P_t \rho_0 - \int_0^t P_{t - s}\left[\partial_x \left[\tilde F(\rho(s)) b(s)\right]\right] ds, \qquad t \ge 0.
\end{equation}
We note that one can also study a weak formulation of the PDE \eqref{eq:FPEqn}. However, it was shown in \cite[Proposition 3.2]{issoglioMcKeanSDEsSingular2023} that mild and weak formulations are equivalent. 

\begin{proposition}[{\cite[Proposition 3.6]{issoglioMcKeanSDEsSingular2023}}]
  \label{prop:mildSoln}
  Under Assumptions  \ref{ass:rho0}, \ref{ass:smallb}  and  \ref{ass:FF}, PDE \eqref{eq:FPEqn}
  has a unique mild solution in the space $C_T C^\alpha$ for any $\alpha \in (\beta, 1-\beta)$.
\end{proposition}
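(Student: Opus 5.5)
We would prove the statement by a Banach fixed-point argument for the map
\[
\mathcal{T}(\rho)(t) := P_t\rho_0 - \int_0^t P_{t-s}\big[\partial_x[\tilde F(\rho(s))b(s)]\big]\,ds
\]
on $C_T C^\alpha$, for fixed $\alpha\in(\beta,1-\beta)$. The three analytical tools are:
\begin{itemize}
\item the Schauder estimates for $(P_t)$ (Lemma \ref{lem:schauder}), namely $\|P_t f\|_{C^{\gamma+\theta}}\le C t^{-\theta/2}\|f\|_{C^\gamma}$ for $\theta\ge 0$, together with $\|\partial_x f\|_{C^{\gamma-1}}\le C\|f\|_{C^\gamma}$;
\item the Bony paraproduct estimate $\|fg\|_{C^{-\hat\beta}}\le C\|f\|_{C^\alpha}\|g\|_{C^{-\hat\beta}}$, valid because $\alpha-\hat\beta>0$;
\item a composition estimate $\|\tilde F(f)\|_{C^\alpha}\le C(1+\|f\|_{C^\alpha})$, using $\tilde F(0)=0$ and $\partial_x\tilde F$ bounded (no $L^\infty$ issue since $\alpha<1$).
\end{itemize}

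\textbf{Well-definedness and stability of the ball.} Since $\rho_0\in C^{1+\nu}\subset C^\alpha$ and $P_t$ is a contraction on $C^\alpha$, continuous in $t$, we have $P_\cdot\rho_0\in C_TC^\alpha$. For the integral term we chain the estimates:
\[
\big\|P_{t-s}\partial_x[\tilde F(\rho(s))b(s)]\big\|_{C^\alpha}\le C(t-s)^{-(1+\hat\beta+\alpha)/2}\big(1+\|\rho(s)\|_{C^\alpha}\big)\|b\|_{C_TC^{-\hat\beta}}.
\]
The exponent satisfies $(1+\hat\beta+\alpha)/2<1$ because $\alpha<1-\beta<1-\hat\beta$, so the singularity is integrable. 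Continuity in time of the integral follows by the standard splitting of increments together with the time regularity of the semigroup. This gives
\[
\|\mathcal{T}\rho\|_{C_tC^\alpha}\le C_0+Ct^{\epsilon}\big(1+\|\rho\|_{C_tC^\alpha}\big),\qquad \epsilon=\tfrac{1-\hat\beta-\alpha}{2}>0 .
\]

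\textbf{Contraction on short intervals.} Write $\tilde F(\rho_1)-\tilde F(\rho_2)=(\rho_1-\rho_2)\int_0^1\partial_x\tilde F(\rho_2+\lambda(\rho_1-\rho_2))\,d\lambda$. Since $\partial_x\tilde F$ is bounded and Lipschitz, we get
\[
\|\tilde F(\rho_1)-\tilde F(\rho_2)\|_{C^\alpha}\le C\big(1+\|\rho_1\|_{C^\alpha}+\|\rho_2\|_{C^\alpha}\big)\|\rho_1-\rho_2\|_{C^\alpha}.
\]
This Hölder estimate for the difference of compositions is the main technical step. The growth of the constant with the norms is the obstacle: it only gives local Lipschitz continuity of $\mathcal T$. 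Hence $\mathcal T$ maps a ball of radius $R>2C_0$ into itself and contracts on $[0,t_0]$ for $t_0$ small, which yields a unique local solution.

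\textbf{Globalisation.} To extend to $[0,T]$ we need an a priori bound that does not let the local time shrink. Because the estimate of $\mathcal T$ is linear in $\|\rho\|$, any solution satisfies, via a Gronwall-type lemma for weakly singular kernels,
\[
\|\rho(t)\|_{C^\alpha}\le C_0+C\int_0^t(t-s)^{-(1+\hat\beta+\alpha)/2}\big(1+\|\rho(s)\|_{C^\alpha}\big)\,ds,
\]
and hence $\sup_t\|\rho(t)\|_{C^\alpha}\le K$. Alternatively one can use a weighted norm $\sup_t e^{-\lambda t}\|\rho(t)\|$. With this bound the local contraction radius $R$ and time $t_0$ can be fixed uniformly. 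We then restart from $\rho(t_0)\in C^\alpha$, using the semigroup property to rewrite the mild equation on $[t_0,2t_0]$, and patch the pieces together to reach $T$. Uniqueness on all of $[0,T]$ follows from the same local contraction applied successively on each subinterval, since any two solutions in $C_TC^\alpha$ are bounded.
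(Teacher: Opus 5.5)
Your argument is correct and is essentially the standard argument behind the cited result \cite[Proposition 3.6]{issoglioMcKeanSDEsSingular2023}, which the paper quotes rather than reproves. That argument combines a contraction on short time intervals, built from the Schauder estimates, the paraproduct bound and the locally Lipschitz composition estimate of Lemma~\ref{lem:prop_op_F}, with the linear-growth a priori bound (the one the paper later invokes as \cite[Proposition 3.3]{issoglioMcKeanSDEsSingular2023}) to glue local solutions up to $T$ and to obtain uniqueness.

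The one statement to phrase more carefully is time continuity. $(P_t)$ is not strongly continuous on $C^\alpha$, so continuity of $t\mapsto P_t\rho_0$ at $t=0$ relies on $\rho_0\in C^{1+\nu}$. At each restart time it relies on the semigroup rewriting of the full Duhamel formula that you mention (equivalently, on the extra regularity $\rho(t_0)\in C^{\alpha'}$ for some $\alpha'\in(\alpha,1-\hat\beta)$), not merely on $\rho(t_0)\in C^\alpha$.
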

The above result is proved in \cite{issoglioMcKeanSDEsSingular2023} under weaker assumptions, namely $\rho_0 \in C^{\alpha}$ and 
$b \in C_T C^{-\hat\beta}$ as there is no need for H\"older regularity of the drift $b$ in time.

We are now in a position to formally define what we mean by a solution to \eqref{eq:MVSDEInit} following \cite[Definition 6.2]{issoglioMcKeanSDEsSingular2023}. A triple $(X, \rho, (\Omega, \cF, (\cF_t),\mathbb P))$, in short written as $(X, \rho, \P)$, is a \emph{solution to McKean-Vlasov SDE} \eqref{eq:MVSDEInit} if 
\begin{itemize}
\item $X$ is a stochastic process on a filtered probability space $(\Omega, \cF, (\cF_t),\mathbb P)$,  
\item $(X, \mathbb P )$ is a solution to the {\em martingale problem} with distributional drift $B:=  F(\rho) b $ and $X_0 \sim \rho_0(x) dx$ in the sense of  \cite[Definition~3.1]{issoglioSDEsSingularCoefficients2023},
\item $\rho \in C_T C^{\beta}$, and $\rho(t)$ is the density of $X_t$ for all $t \in [0, T]$. 
\end{itemize}
The problem admits \emph{uniqueness} if for any two solutions $(X, \rho, \P)$ and $(X', \rho', \P')$ the densities $\rho, \rho'$ coincide as elements of $C_TC^{\beta}$ and the laws of $(X, \P)$ and $(X', \P')$ are identical.

\begin{theorem}[{\cite[Theorem 6.3 and Theorem 4.2]{issoglioMcKeanSDEsSingular2023}}]\label{thm:1}
Let Assumptions  \ref{ass:rho0}, \ref{ass:smallb}  and  \ref{ass:FF} hold. There is a unique solution to McKean-Vlasov SDE \eqref{eq:MVSDEInit} and $\rho$ is a mild solution to PDE \eqref{eq:FPEqn}.
\end{theorem}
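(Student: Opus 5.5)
This statement is quoted from \cite[Theorem 6.3 and Theorem 4.2]{issoglioMcKeanSDEsSingular2023}, so the proof amounts to checking that our standing assumptions are stronger than those used there, and recalling the structure of the argument. The assumptions there are $\rho_0\in C^\alpha$ for some $\alpha>\hat\beta$, $b\in C_TC^{-\hat\beta}$, and $F$, $\tilde F$ as in Assumption \ref{ass:FF}. Assumption \ref{ass:rho0} gives $\rho_0\in C^{1+\nu}\subset C^\alpha$ for every $\alpha<1$. Assumption \ref{ass:smallb} gives $b\in C^{1/2}_TC^{-\hat\beta}\subset C_TC^{-\hat\beta}$ with $\hat\beta<\beta<1/2$. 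The required hypotheses are therefore met.

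The construction itself proceeds as a decoupling argument in three steps.

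\textbf{Step 1 (the density).} Proposition \ref{prop:mildSoln} gives a unique mild solution $\rho\in C_TC^\alpha$ of \eqref{eq:FPEqn} for $\alpha\in(\beta,1-\beta)$. Its proof is a Banach fixed point in $C_TC^\alpha$ for the map in \eqref{eq:mild_soln_fp}, using three ingredients:
\begin{itemize}
\item Schauder estimates (Lemma \ref{lem:schauder});
\item the Bony product estimate $\|\tilde F(\rho)b\|_{C^{-\hat\beta}}\lesssim\|\tilde F(\rho)\|_{C^\alpha}\|b\|_{C^{-\hat\beta}}$, valid since $\alpha-\hat\beta>0$;
\item local Lipschitz continuity of $\rho\mapsto\tilde F(\rho)$ in $C^\alpha$, which comes from $\partial_x\tilde F$ being Lipschitz.
\end{itemize}
Contraction is obtained on a short interval, or with a weighted-in-time norm, and the bounds are global because $\partial_x\tilde F$ is bounded.

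\textbf{Step 2 (the linear SDE).} Freeze $\rho$ and set $B:=F(\rho)b\in C_TC^{-\hat\beta}$, again by the product estimate with $F(\rho)\in C_TC^\alpha$. Well-posedness of the martingale problem for the linear SDE with distributional drift $B$, from \cite{issoglioSDEsSingularCoefficients2023}, gives a unique-in-law solution $X$ with $X_0\sim\rho_0$.

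\textbf{Step 3 (closing the loop).} One must check that the law density of $X_t$ equals $\rho(t)$. The time marginals $v(t)$ of $X$ solve the linear Fokker–Planck equation $\partial_tv=\frac12\Delta v-\partial_x(vF(\rho)b)$. The function $\rho$ solves the same linear equation, because $\tilde F(\rho)=\rho F(\rho)$. Uniqueness for this linear equation, for example by duality with the backward Kolmogorov PDE with drift $B$ solved mildly in \cite{issoglioSDEsSingularCoefficients2023}, then gives $v=\rho$.

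\textbf{Uniqueness.} For two solutions, their densities are mild solutions of the nonlinear PDE. The weak formulation is equivalent to the mild one by \cite[Proposition 3.2]{issoglioMcKeanSDEsSingular2023}, so the densities coincide by Proposition \ref{prop:mildSoln}. Uniqueness of the law then follows from uniqueness for the linear martingale problem with that fixed drift.

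\textbf{Main obstacle.} The delicate step is the identification in Step 3, namely uniqueness for the linear Fokker–Planck equation with a distributional coefficient in the class of measure-valued or $C^\beta$ solutions. Here one needs that the marginal laws of the martingale-problem solution have the regularity required for the duality argument. In the present paper this is all taken from the cited reference.
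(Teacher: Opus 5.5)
Your proposal is correct and matches the paper, which gives no independent proof of this theorem. The paper simply invokes \cite{issoglioMcKeanSDEsSingular2023}, having noted (in the introduction and after Proposition~\ref{prop:mildSoln}) that the reference only needs $\rho_0\in C^{\alpha}$ for some $\alpha>\hat\beta$, $b\in C_TC^{-\hat\beta}$ and the stated conditions on $F$ and $\tilde F$; this is exactly the hypothesis check you carry out.

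One caution on your optional sketch. In Step~3, the claim that the marginals $v(t)$ solve the linear Fokker--Planck equation is not meaningful for merely measure-valued $v(t)$, because the product of a measure with $F(\rho)b\in C^{-\hat\beta}$ is undefined. The identification of the law of $X_t$ with $\rho(t)$ therefore has to come from the martingale problem itself, for example by duality with the backward Kolmogorov equation or by approximating the drift. That route needs no a priori regularity of the law of $X_t$, so the obstacle you name is not really where the difficulty lies.
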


To study the strong error of a numerical approximation to $X$, we have to be able to construct a solution $X$ on a given probability space (on the space employed to run the numerical scheme and support the random initial condition). It is possible in dimension $1$ as the following theorem shows. The limitation to dimension one comes from \cite[Ch.~IV, Thm.~3.2]{ikeda} as our construction involves an SDE with H\"older continuous diffusion coefficient. We emphasise that the strong uniqueness in a classical sense cannot be considered here as the solution to the SDE \eqref{eq:MVSDEInit} is only defined in the weak sense.

\begin{theorem}\label{thm:strong}
Let $(\Omega, \cF, (\cF_t),\mathbb P)$ be a filtered probability space supporting a Brownian motion $(W_t)$ and an independent $\cF_0$-measurable random variable $\xi$ with the density $\rho_0$. Under the assumptions of Theorem~\ref{thm:1}, there is a stochastic process $X$ adapted to the filtration $(\cF_t)$ such that $(X, \rho, \P)$ is a solution to the McKean-Vlasov SDE \eqref{eq:MVSDEInit} and $X_0 = \xi$ $\P$-a.s. Furthermore, $\E X^2_t < \infty$ for any $t \in [0, T]$.
\end{theorem}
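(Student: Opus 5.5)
We plan to freeze the density $\rho$ from Theorem~\ref{thm:1} and treat \eqref{eq:MVSDEInit} as a standard (non-McKean) SDE with the fixed distributional drift $B := F(\rho)b$. The strategy is the classical Zvonkin-type transformation used for one-dimensional SDEs with distributional drift. We remove the singular drift by a change of variables, solve the transformed SDE pathwise on the given space, and transform back. We then check that the resulting process solves the martingale problem, so that uniqueness in law (Theorem~\ref{thm:1}) identifies it with the unique solution.

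\textbf{Step 1: regularity of the frozen drift.} We first show that $B(t) = F(\rho(t)) b(t) \in C^{-\hat\beta}$ uniformly in $t$. Since $\rho \in C_T C^\alpha$ with $\alpha > \hat\beta$ (Proposition~\ref{prop:mildSoln}), and $F$ is Lipschitz and bounded, we get $F(\rho)\in C_TC^\alpha$. The paraproduct (Bony) estimate then gives $\|F(\rho)b\|_{C_TC^{-\hat\beta}} \lesssim \|F(\rho)\|_{C_TC^\alpha}\|b\|_{C_TC^{-\hat\beta}}$, using $\alpha-\hat\beta>0$.

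\textbf{Step 2: the Zvonkin transform.} As in \cite{issoglioSDEsSingularCoefficients2023}, we solve the backward Kolmogorov-type PDE $\partial_t u + \frac12 \partial_{xx} u + B\,\partial_x u = \lambda u - B$ with $u(T)=0$ in mild form. For $\lambda$ large, the solution lies in $C_T C^{1+\gamma}$ with $\gamma\in(\hat\beta,1-\hat\beta)$ and satisfies $\|\partial_x u\|_\infty \le 1/2$. Consequently $\phi(t,x)=x+u(t,x)$ is, for each $t$, a bi-Lipschitz $C^1$ diffeomorphism with inverse $\psi(t,\cdot)$. Formally, $Y_t=\phi(t,X_t)$ should satisfy
\[ dY_t = \lambda u(t,\psi(t,Y_t))\,dt + \big(1+\partial_x u(t,\psi(t,Y_t))\big)\,dW_t .\]
In this equation the drift is Lipschitz, and the diffusion coefficient is bounded, bounded away from zero, and H\"older continuous of order $\gamma$. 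Since $\gamma>1/2$ can be chosen (because $\hat\beta<1/2$), the Yamada--Watanabe condition of \cite[Ch.~IV, Thm.~3.2]{ikeda} gives pathwise uniqueness. Together with weak existence (the coefficients are continuous and bounded, with linear growth), this yields a unique strong solution $Y$ on the given space with $Y_0=\phi(0,\xi)$, adapted to $(\cF_t)$. We then set $X_t=\psi(t,Y_t)$, so that $X_0=\xi$.

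\textbf{Step 3: identification and moments.} The main obstacle is to verify that $X$ solves the martingale problem of \cite[Definition~3.1]{issoglioSDEsSingularCoefficients2023} with drift $B$. Because $B$ is only a distribution, Itô's formula cannot be applied to $X$ directly. We would use the equivalence, proved in \cite{issoglioSDEsSingularCoefficients2023}, between solutions of the martingale problem and weak solutions of the transformed SDE for $Y$ via $\psi$. Alternatively, one can approximate $b$ by smooth $b^n$, apply Itô's formula to $\phi^n(t,X^n_t)$, and pass to the limit using continuity of $u$ in $B$. Either route shows that $(X,\P)$ solves the martingale problem with $X_0\sim\rho_0$. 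By uniqueness of the martingale problem, the law of $X$ equals that of the weak solution in Theorem~\ref{thm:1}, so $\rho(t)$ is the density of $X_t$ and $(X,\rho,\P)$ is a solution. Finally, the bound $\E X_t^2<\infty$ follows from three facts: $|X_t|\le 2|Y_t - Y_0| + C + |\xi|\cdot c$ by bi-Lipschitzness; standard $L^2$ estimates for $Y$ with bounded coefficients; and $\E\xi^2<\infty$ from Assumption~\ref{ass:rho0}.
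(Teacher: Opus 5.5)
Your proposal follows the paper's proof essentially step by step: you freeze $\rho$, use Bony's product to get $F(\rho)b\in C_TC^{-\hat\beta}$, solve the Kolmogorov-type PDE with large $\lambda$, and invert $\phi=x+u$. You then obtain a strong solution $Y$ via the one-dimensional Ikeda--Watanabe pathwise-uniqueness result, identify $X=\psi(t,Y_t)$ as a martingale-problem solution via the virtual-solution equivalence, and get second moments from the linear growth of $\psi$. Your explicit choice $\gamma\in(1/2,1-\hat\beta)$ for the Yamada--Watanabe condition, your explicit appeal to uniqueness of the frozen-drift martingale problem to conclude that $\rho(t)$ is the density of $X_t$, and your direct $L^2$ bound from the boundedness of the coefficients of $Y$ are useful clarifications of points the paper leaves implicit.
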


{
\begin{proof}
We will construct $X$ as a so-called virtual solution. For $\lambda > 0$, let us consider a PDE
\begin{equation}
		\label{eq:kolmogorov}
	  	\begin{cases}
			u_t + \frac{1}{2} u_{xx} + F(\rho) b  u_x = \lambda u - F(\rho) b
	  	  	\\
	  	  	u(T) = 0
	  	\end{cases}
	\end{equation}
with the solution understood in a mild sense. As $b \in C_TC^{-\hat\beta}$, $\rho \in C_T C^\alpha$, $\alpha > \beta$ and $F \in C^1$, we have $F(\rho) b \in C_T C^{-\hat \beta}$, where we use the property of the point-wise product between a function and a distribution defined using Bony's paraproduct \cite{bony}, see also \cite[Eq.~\lbracket2.9)]{issoglioMcKeanSDEsSingular2023} for the specific application of Bony's paraproduct to the current setting. It is shown in \cite[Theorem 4.7]{issoglioPdeDriftNegative2022} that a solution $u$ belongs to $C_TC^{(2 - \hat\beta)-} $ and is unique in $C_T C^{(1+\hat\beta)+}$. Hence, $u_x$ is  \holder{\alpha} continuous for any $\alpha <1-\hat\beta$.	By \cite[Proposition 4.13]{issoglioPdeDriftNegative2022}, we have $|u_x|_\infty<1/2$ for $\lambda$ large enough and the mapping 
\begin{equation}\label{eq:phi}
    \phi(t, x):= x + u(t,x)
\end{equation} 
is invertible in the space dimension. We denote this space-inverse by $\psi(t, \cdot)$. Due to the bound on $u_x$, $\psi$ is Lipschitz in the spatial variable with constant $1/(1-1/2) = 2$ uniformly in $t$. Consider now an SDE
\begin{equation}
	\label{eq:Y_def}
	Y_t = Y_0 + \lambda \int_0^t u(s, \psi(s, Y_s)) ds + \int_0^t (  u_x (s, \psi(s, Y_t)) + 1 ) dW_s
\end{equation}
with $Y_0 = \phi(0, \xi)$. The drift is Lipschitz in $y$ uniformly in $t$ and the volatility is \holder{\alpha} in $y$ uniformly in $t$. This allows us to use arguments in the proof of \cite[Ch.~IV, Thm.~3.2]{ikeda} (see also the corollary following it) to claim the existence and uniqueness of a strong solution. By \cite[Thm.~3.9]{issoglioSDEsSingularCoefficients2023}, the process $X_t := \psi (t, Y_t)$, which we call a \emph{virtual solution}, is a solution to the martingale problem  \eqref{eq:MVSDEInit}  with distributional drift $B = F(\rho) b$ and  initial condition $X_0 = \psi(0, Y_0) = \psi(0, \phi(0, \xi)) = \xi$.\footnote{The term virtual solution comes from \cite{flandoliMultidimensionalStochasticDifferential2017}, where the authors use an analogous equation for $Y$ to define the solution of an SDE with distributional drift $b$ in a fractional Sobolev space. The equivalence of this solution with the solution defined via a martingale problem in \cite{issoglioSDEsSingularCoefficients2023} follows from their Theorem 3.9. We also note that the virtual solution does not depend on $\lambda$ thanks to the links between virtual solutions and the martingale problem; the reader is refered to \cite{issoglioSDEsSingularCoefficients2023} for a complete presentation of those links.} 

We turn attention to proving the square integrability of $Y_t$. By the Lipschitz property of $x \mapsto \phi(0, x)$, $\E Y_0^2 < \infty$ as $Y_0 = \phi(0, \xi)$ and $\E\xi^2 < \infty$. We infer $\E Y_t^2 < 0$ by an easy adaptation of the proof of \cite[Ch.~IV, Thm.~2.4]{ikeda} to time-inhomogeneous case as long as the following condition holds:
\[
(  u_x (s, \psi(s, y)) + 1 )^2 + u(s, \psi(s, Y_s))^2 \le K (1 + x^2)
\]
for some constant $K > 0$ independent of $s \in [0, T]$. This follows immediately from the boundedness of $u$ and $u_x$ as $u \in C_T C^{2-\hat\beta}$. To transfer this square integrability to $X$, we show that the mapping $\psi$ has linear growth in $y$ uniformly in $t \in [0, T]$. To this end, we write
\[
| \psi(t, y)| \le |\psi(t, y) - \psi(t, \phi(t, 0))| + |\psi(t, \phi(t, 0))| \le 2 |y - \phi(t, 0)| \le 2|y| + \sup_{t \in [0, T]} 2|\phi(t, 0)|,
\]
where $\sup_{t \in [0, T]} 2|\phi(t, 0)| = \sup_{t \in [0, T]} 2|u(t, 0)| < \infty$ by the boundedness of $u$.
\end{proof}
}

Our numerical approach is to approximate the McKean-Vlasov SDE \eqref{eq:MVSDEInit} by replacing $b$ with 
\begin{equation}\label{eq:bn}
  b^N := P_{\frac{1}{N}} b
\end{equation}
and then applying Euler scheme to solve the resulting SDE. Since the density of the solution appears in the equation, we start with the study of the solution to the following regularised Fokker-Planck equation
\begin{equation}
  \label{eq:FPEqnReg}
  \begin{cases}
    \partial_t \rho^N = \frac12 \Delta \rho^N - \partial_x[\tilde{F}(\rho^N)b^N],\\
    \rho^N(0) = \rho_0.
  \end{cases}
\end{equation}
As before, Proposition \ref{prop:mildSoln} guarantees the existence of a unique mild solution:
\begin{equation}\label{eq:mild_soln_rfp}
  \rho^N(t) = P_t \rho_0 - \int_0^t P_{t - s}\left[\partial_x \left[\tilde F(\rho^N(s)) b^N(s)\right]\right] ds, \qquad t \in [0, T].
\end{equation}
Furthermore \cite[Proposition~4.4]{issoglioMcKeanSDEsSingular2023} ensures that 
\begin{equation}\label{eq:rhoNminrho}
\lim_{N\to\infty} \|\rho^N -\rho\|_{C_TC^{\alpha}}=0 \quad \text{for any } \alpha \in (\beta, 1-\beta). 
\end{equation}

Let us denote the drift coefficient of the McKean-Vlasov SDE \eqref{eq:MVSDEInit} by $B := F(\rho) b$ and its smooth approximation by $B^N := F(\rho^N) b^N$. Let  $(\Omega, \cF, (\cF_t),\mathbb P)$ be a filtered probability space supporting a Brownian motion $(W_t)$ and let $X$ be the `strong solution' from Theorem \ref{thm:strong} so that $(X, \rho, \P)$ is  the solution to \eqref{eq:MVSDEInit}. On the same probability space, we consider the regularised McKean-Vlasov SDE:
\begin{equation}  \label{eq:MVSDEReg}
  X^N_{t} = X_{0} + \displaystyle\int_{0}^{t} B^N(s, X^N_{s}) ds + W_t.
\end{equation}
We emphasise that $X_0$ is the initial value of the solution $X$. Equation \eqref{eq:MVSDEReg} has a unique strong  solution $X^N$ which has the  density $\rho^N$.

We discretise the time interval $[0,T]$ into $m$ equal subintervals and we set $t_i:= i \frac Tm$ for $i=0, \ldots, m$. The Euler-Maruyama approximation at time $ t \in (t_i, t_{i+1}]$ of the SDE \eqref{eq:MVSDEReg}
will be
\begin{equation}
  X^{N,m}_t = X^{N,m}_{t_{i}} + B^N(t_i, X^{N,m}_{t_i}) (t - t_i )  +W_{t} - W_{t_i}.
  \label{eq:RegEMScheme}
\end{equation}
Our aim is to quantify the approximation error between $X^{N,m}$ and the solution $X$ of the McKean SDE \eqref{eq:MVSDEInit}. There are two sources of errors. The first one is the approximation of the drift $F(\rho)b$ with $F(\rho^N)b^N$. The second one is the error introduced by the Euler-Maruyama approximation to $X^N$. The larger the $N$, the better the approximation of $X$ by $X^N$, but also the bigger the error of the Euler-Maruyama solution due to a more volatile drift (i.e., with a higher amplitude of the function itself and of its derivative). Our strategy is therefore to identify an optimal dependence of $N$ on $m$, so as to maximise the strong rate of convergence of $X^{N, m}$ to $X$.

There are two main theoretical results of this paper. In the first one, it is assumed that $\rho^N$ is known exactly as presented above. In the second one, we provide conditions that a numerical solver to $\rho^N$ must satisfy in order to keep the convergence rate unchanged when $\rho^N$ is replaced by its approximation in \eqref{eq:MVSDEReg} and \eqref{eq:RegEMScheme}. Details and proofs are in Theorem \ref{thm:conv} and Theorem \ref{cor:conv}, respectively. Both theorems have the following general statement.

\begin{theoremNN}
Denote by $\hat X^{N,m}$ the Euler-Maruyama solution \eqref{eq:RegEMScheme} with $\rho^N$ or its approximation $\hat \rho^N$. For any $\lambda \in (0, 1/2 - \beta)$, setting $N(m)=m^\kappa$, where
\[
\kappa= \frac 1{(1+\beta)  + 2 (\frac12 - \beta - \lambda)^2},
\]
yields the convergence rate
\[
\sup_{t \in [0, T]} \mathbb E |\hat X^{N(m),m}_{t} - X_{t}| \leq c  m^{-\frac12 + \frac {(1+\beta)/2}{(1+\beta)  + 2 (\frac12 - \beta - \lambda)^2} }
\]
for some constant $c > 0$ and any $m$ sufficiently large so that $\|B^{N(m)} - B\|_{C_TC^{-(1/2 - \lambda)}} < 1$.
\end{theoremNN}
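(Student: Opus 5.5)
The plan is to split the error with the triangle inequality,
\[
\E|\hat X^{N,m}_t - X_t| \le \E|\hat X^{N,m}_t - X^N_t| + \E|X^N_t - X_t|,
\]
and to bound the two terms separately, each as an explicit power of $N$ and $m$. The exponent $\kappa$ is then chosen to balance these powers.

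\textbf{Regularisation error.} For $\E|X^N_t - X_t|$ we will use the Zvonkin-type (virtual solution) transform from the proof of Theorem~\ref{thm:strong}. We solve the Kolmogorov PDE \eqref{eq:kolmogorov} twice: once with drift $B$, giving $u$, and once with drift $B^N$, giving $u^N$. For $\lambda$ large, both $u_x$ and $u^N_x$ are bounded by $1/2$ uniformly in $N$. Setting $Y=\phi(t,X_t)$ and $Y^N=\phi^N(t,X^N_t)$, we compare the two transformed SDEs.
\begin{itemize}
\item The PDE stability estimate from \cite{issoglioPdeDriftNegative2022}, in the $C_TC^{-(1/2-\lambda)}$ topology, gives $\|u-u^N\|_{C_TC^{1+\gamma}} \lesssim \|B-B^N\|_{C_TC^{-(1/2-\lambda)}}$ for a suitable $\gamma$.
\item Because the diffusion coefficient is only H\"older, we use a Yamada--Watanabe type $L^1$ argument rather than Gronwall in $L^2$. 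This yields a bound of the form $\|B^N-B\|^{\theta}$ with some exponent $\theta$ depending on $\frac12-\beta-\lambda$. This is the origin of the squared term $(\frac12-\beta-\lambda)^2$ in $\kappa$, and the hypothesis $\|B^N-B\|<1$ is what lets us replace higher powers by this leading power.
\item We then bound $\|B^N-B\|_{C_TC^{-(1/2-\lambda)}}$ by splitting $F(\rho^N)b^N - F(\rho)b = F(\rho^N)(b^N-b) + (F(\rho^N)-F(\rho))b$. For the first piece, the heat semigroup gives $\|b^N-b\|_{C^{-\beta-\epsilon}} \lesssim N^{-\epsilon/2}$ with $\epsilon$ up to $\frac12-\lambda-\beta$. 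For the second piece, we need a quantitative version of \eqref{eq:rhoNminrho}, namely $\|\rho^N-\rho\|_{C_TC^{\alpha}}\lesssim N^{-(\frac12-\beta-\lambda)/2}$. This is proved via the mild formulation and a Gronwall argument with Schauder estimates, using $\beta-\hat\beta<\nu$.
\end{itemize}
Altogether the regularisation error is $\lesssim N^{-(\frac12-\beta-\lambda)^2}$, up to constants.

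\textbf{Euler error.} For $\E|\hat X^{N,m}_t - X^N_t|$, when $B^N$ is smooth, the standard Euler analysis with a Gronwall argument in $L^1$ applies. It uses $\|B^N\|_\infty\lesssim N^{\beta/2}$ and $\|\partial_x B^N\|_\infty\lesssim N^{(1+\beta)/2}$, together with the $\frac12$-H\"older regularity of $B^N$ in time. The time regularity is why Assumption~\ref{ass:smallb} requires $b\in C^{1/2}_T$; for $\rho^N$ we need $C^{1/2}$-in-time bounds, to be proved in Section~\ref{sec:rho}. To avoid an exponential of $\|\partial_x B^N\|$, we again transform with $u^N$, or use the Itô–Tanaka trick with Gaussian density bounds. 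This should give an Euler error of order $\lesssim N^{(1+\beta)/2} m^{-1/2}$.

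\textbf{Balancing and the approximate density.} Equating $N^{(1+\beta)/2}m^{-1/2}$ with $N^{-(\frac12-\beta-\lambda)^2}$ gives $N=m^\kappa$ with $\kappa$ as stated, and the rate $m^{-1/2+\kappa(1+\beta)/2}$. For $\hat\rho^N$, we require $\|\hat\rho^N-\rho^N\|$ to be at most the same order, so that the balance is unaffected.

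The main obstacle is the regularisation step. It requires the quantitative rate for $\rho^N\to\rho$ and the $L^1$ stability of the transformed SDE with H\"older diffusion, and this is where tracking the exponent precisely is delicate.
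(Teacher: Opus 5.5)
Your architecture for the exact-density case is the paper's. It bounds the regularisation error via the virtual-solution transform and an $L^1$ local-time argument, giving $c_{15}\|B^N-B\|_{C_TC^{-\gamma}}^{2\alpha-1}$. Your unspecified exponent is $2\alpha-1$, with $\alpha<1-\hat\beta$ the H\"older exponent of $u_x$; the choices $\alpha=1-\beta-\lambda$, $\gamma=\frac12-\lambda$ produce $(\frac12-\beta-\lambda)^2$. It then uses the paraproduct splitting of $B^N-B$, an Euler bound polynomial in $\|B^N\|_\infty$, $\|\partial_x B^N\|_\infty$, $[B^N]_{1/2,L^\infty}$, and balancing.

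The genuine gap is the approximate-density case. The statement includes it, and you settle it in one sentence. Asking $\hat\rho^N-\rho^N$ to be small ``of the same order'' in the H\"older norm used for the stability term is not enough. If you run the Euler analysis for the SDE with drift $\hat B^N=F(\hat\rho^N)b^N$, its constants involve $\|\partial_x\hat B^N\|_\infty$ and $[\hat B^N]_{1/2,L^\infty}$, hence $\|\partial_x\hat\rho^N\|_{L^\infty_TL^\infty}$ and $[\hat\rho^N]_{1/2,L^\infty}$. A function can be arbitrarily close to $\rho^N$ in $C_TC^{\hat\gamma}$, $\hat\gamma<1$, while having arbitrarily large derivative. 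This is why the paper assumes the property-preservation bounds \eqref{eqn:p1}--\eqref{eqn:p3} on top of the convergence \eqref{eqn:p4}, and inserts the intermediate process $\hat X^N$, giving three terms:
\begin{itemize}
\item the Euler error for $\hat B^N$, with the same powers of $N$ by \eqref{eqn:p1}--\eqref{eqn:p2};
\item $\E|\hat X^N_t-X^N_t|\le c_{15}\|\hat B^N-B^N\|^{2\alpha-1}_{C_TC^{-\gamma}}$, where $\|\hat B^N-B^N\|_{C_TC^{-\gamma}}\le c\|b^N\|_{C_TC^{-\gamma}}\|F(\hat\rho^N)-F(\rho^N)\|_{C_TC^{\hat\gamma}}$ is controlled by \eqref{eqn:p4};
\item $\E|X^N_t-X_t|$.
\end{itemize}
Alternatively, your two-term split survives if you compare $\hat X^{N,m}$ directly with $X^N$ using the transform $u^N$ of the exact drift $B^N$. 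Then $\hat\rho^N$ enters only through $\|\hat B^N-B^N\|_{L^\infty_TL^\infty}\le\|F_x\|_\infty\|b^N\|_{C_TL^\infty}\|\hat\rho^N-\rho^N\|_{L^\infty_TL^\infty}$, linearly and with the factor $N^{\beta/2}$. So you need $\|\hat\rho^N-\rho^N\|_{L^\infty_TL^\infty}\le c\,N^{-\beta/2-(\frac12-\beta-\lambda)^2}$, which is not ``the same order''. Either way, the condition and where it enters must be made explicit.

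In the exact case the Euler step is only announced; a plain Gronwall argument would produce $e^{T\|\partial_xB^N\|_\infty}=e^{cN^{(1+\beta)/2}}$. The paper does not prove the polynomial bound. It imports $d_1(N)m^{-1}+d_2(N)m^{-1/2}$, with $d_1=\|B^N\|_\infty(1+\|B^N_x\|_\infty)$ and $d_2=\|B^N_x\|_\infty+[B^N]_{1/2,L^\infty}$, from \cite[Proposition~4]{chaparro2023}. Your $u^N$ suggestion does give such a bound. The transformed exact and Euler processes share the diffusion coefficient $1+u^N_x\circ\psi^N$, which is Lipschitz for fixed $N$. So the local time of their difference vanishes, and the Gronwall constant is at most $\lambda$, uniformly in $N$.

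You do omit one input. The identity $\partial_xB^N=F_x(\rho^N)\rho^N_xb^N+F(\rho^N)b^N_x$ needs $\|\rho^N_x\|_{L^\infty_TL^\infty}\le c_7+c_8N^{\beta/2}$ (Lemma~\ref{lem:rhoN_der_reg_time} with $\eta=\beta-\hat\beta$). That lemma and Lemma~\ref{lem:rhoN_time_reg} are where $\rho_0\in C^{1+\nu}$ and $\beta-\hat\beta<\nu$ are used, not in the rate for $\rho^N\to\rho$.

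For that rate, your plan is the right one. Running the mild-formulation stability argument with $b^N-b$ measured in $C^{-(1/2-\lambda)}$ forces $\hat\gamma\in(\frac12-\lambda,\frac12+\lambda)$ and gives $\|\rho^N-\rho\|_{C_TC^{\hat\gamma}}\le c\,N^{-(\frac12-\beta-\lambda)/2}$. By contrast, the paper's displayed chain passes through $\|b^N-b\|_{C_TC^{-\beta}}$. For $b\in C_TC^{-\hat\beta}$, that quantity only decays like $N^{-(\beta-\hat\beta)/2}$.
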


\begin{remark}
This convergence rate is identical, up to notational changes, to that obtained for the SDE without the McKean factor $F(\rho)$ in \cite{chaparro2023}. In particular, the following observations hold true. The function $\lambda \mapsto \frac12 - \frac {(1+\beta)/2}{(1+\beta)  + 2 (\frac12 - \beta - \lambda)^2}$ is decreasing in $\lambda$ for $\lambda \in (0, 1/2-\beta)$, so it is bounded from above by
 \[
r(\beta) = \frac 12 - \lim_{\lambda \downarrow 0} \frac {(1+\beta)/2}{(1+\beta)  + 2 (\frac12 - \beta - \lambda)^2}
= \frac12 - \frac{(1+\beta)/2}{1 +\beta + 2(\frac12-\beta)^2}.
 \]
This means that any rate strictly smaller than $r(\beta)$ is achievable by our numerical scheme. The limiting behaviour of $r(\beta)$ in extremes, i.e., when $\beta \downarrow 0$ and $\beta \uparrow 1/2$, is as follows
\[
\lim_{\beta \downarrow 0} r(\beta) = \frac12 - \frac13 = \frac16, \qquad
\lim_{\beta \uparrow 1/2} r(\beta) = \frac12 - \frac12 = 0.
\]
The reader is encouraged to consult a deeper discussion on our rate $r(\beta)$ in relation to the literature in \cite[Remark 10]{chaparro2023}. We only remark that although our rate when $\beta \downarrow 0$ appears suboptimal in comparison for the convergence rate of $1/2$ for measurable drifts, it is still compares favourably to existing results, e.g., in \cite{deangelisNumericalSchemeStochastic2022}. Our numerical results presented in Section \ref{sec:numerics} suggest that the rate is indeed suboptimal, but a stronger result would require a different approach than ours.
\end{remark}

We briefly illustrate our strategy for the proof of the main result on convergence rate. The idea is to split the error into three terms
\begin{equation}\label{eq:TriangleIneq}
  \sup_{t\in[0,T]}\mathbb E | \hat X^{N,m}_{t} - X_{t} |
  \leq \sup_{t\in[0,T]}\mathbb E | \hat X^{N,m}_{t} - \hat X^N_{t} |
  + \sup_{t\in[0,T]} \mathbb E | \hat X^N_{t} - X^N_{t} |
  +\sup_{t\in[0,T]}\mathbb E | X^N_{t} - X_{t} |,
\end{equation}
where the first one is the error of the Euler-Maruyama scheme applied to a smoothed SDE (the {\em Euler error}), the second one is the error due to the approximation of the solution of PDE \eqref{eq:FPEqnReg} with $\hat \rho^N$ (this term disappears in Section \ref{sec:main_result} where exact $\rho^N$ is used), while the third term is controlled with {\em stability estimates} on the SDE. Clearly, since the drift $B^N$ of the  SDE \eqref{eq:MVSDEReg} is smooth, a standard result on the convergence of Euler scheme could be applied to control the Euler error. The issue, however, would be that  the constant in front of the rate would depend on the properties of the drift such as its norm or the norm of its derivative in an exponential way, exploding as $N\to \infty$. To control this constant, we have to obtain new estimates for the Euler scheme error that depend  on the norms and seminorms $[B^N]_{\frac{1}{2}, L^{\infty}}$, $ \|B^N\|_{C_T L^{\infty}}$ and $\| B^N_x\|_{C_T L^{\infty}}$ in a polynomial way, rather than an exponential way.

\section{Analytical results}\label{sc:analytical}

\begin{lemma}[Schauder estimates]\label{lem:schauder}
\leavevmode
\begin{enumerate}
\item[(i)] For any $\xi \in \mathbb R$, $\zeta \geq 0$ and $t' > 0$, there is a constant $c_S^1 = c_S^1(\xi, \zeta, t')$ such that for any $f \in C^{\xi - 2\zeta}$
\begin{equation*}
\| P_t f \|_{C^\xi} \leq c_S^1 t^{-\zeta} \| f \|_{C^{\xi-2\zeta}}, \qquad t \in (0, t'].
\end{equation*}
\item[(ii)] For $\xi \in \mathbb R$,  $\zeta \in (0, 1)$ and $t' > 0$, there exists a constant $c_S^2=c_S^2(\xi, \zeta, t')$ such that $f \in C^{\xi + 2\zeta}$, 
\begin{equation*}
\| P_t f  - P_s f\|_{C^\xi} \leq c_S^2 (t - s)^{\zeta} \| f \|_{C^{\xi + 2\zeta}}, \qquad 0 \le s < t \le t'.
\end{equation*}
In particular, when $s = 0$, we obtain
\begin{equation*}
\| P_t f  - f\|_{C^\xi} \leq c_S^2 t^{\zeta} \| f \|_{C^{\xi + 2\zeta}}, \qquad 0 < t \le t'.
\end{equation*}
\end{enumerate}
\end{lemma}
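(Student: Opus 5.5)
Both estimates follow from the Littlewood--Paley characterisation of $C^\gamma$ together with the observation that $P_t$ is a Fourier multiplier with symbol $e^{-t|\eta|^2/2}$. The plan is to estimate each dyadic block $\Delta_j f := (\phi_j \hat f)^\vee$ separately and then take the supremum over $j$ with the appropriate weights $2^{j\xi}$. The key tool is a standard Bernstein-type multiplier lemma (see \cite[Lemma 2.4]{bahouri_fourier_2011}): for $j\ge 0$, where $\phi_j$ is supported in an annulus $\{|\eta|\sim 2^j\}$, one has $\|P_t \Delta_j f\|_{L^\infty} \le C e^{-c t 2^{2j}} \|\Delta_j f\|_{L^\infty}$, with $C,c$ independent of $j$ and $t$. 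For the block $j=-1$ (a ball around the origin) we use only that $P_t$ is a contraction on $L^\infty$ (its kernel is a probability density). In addition, $P_t$ commutes with $\Delta_j$.

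For (i), let $j\ge0$. We have
\[
2^{j\xi}\|\Delta_j P_t f\|_{L^\infty}\le C\,2^{2j\zeta}e^{-ct2^{2j}}\,2^{j(\xi-2\zeta)}\|\Delta_j f\|_{L^\infty}.
\]
Since $\sup_{x>0}x^{\zeta}e^{-cx}<\infty$, setting $x=t2^{2j}$ gives $2^{2j\zeta}e^{-ct2^{2j}}\le C_\zeta t^{-\zeta}$. For $j=-1$ the weights differ only by a factor of $2^{2\zeta}$. This block is bounded by $\|f\|_{C^{\xi-2\zeta}}$, and since $t\le t'$ we have $1\le t'^{\zeta}t^{-\zeta}$. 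Taking the supremum over $j$ gives (i) with a constant depending on $t'$ through this last step.

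For (ii), write $P_tf-P_sf=P_s(P_{t-s}-I)f$. Because $P_s$ is a contraction on each block, it suffices to treat the case $s=0$ with $h=t-s$. For $j\ge0$, the symbol of $P_h-I$ on the annulus equals $-\int_0^h \tfrac{|\eta|^2}{2}e^{-r|\eta|^2/2}\,dr$. The same multiplier lemma therefore gives
\[
\|(P_h-I)\Delta_j f\|_{L^\infty}\le C\min(1,h2^{2j})\|\Delta_j f\|_{L^\infty}.
\]
The bound by $1$ holds because $P_h$ is a contraction; the bound by $h2^{2j}$ comes from integrating $\|\partial_{xx}P_r\Delta_j f\|_{L^\infty}\le C2^{2j}\|\Delta_j f\|_{L^\infty}$ over $r\in[0,h]$. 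Interpolating, $\min(1,x)\le x^\zeta$ for $\zeta\in(0,1)$, so $2^{j\xi}\|(P_h-I)\Delta_jf\|_{L^\infty}\le C h^\zeta 2^{j(\xi+2\zeta)}\|\Delta_j f\|_{L^\infty}$. For the low-frequency block, $\partial_{xx}$ is bounded on functions with compactly supported Fourier transform, which gives the same type of bound. Taking the supremum yields (ii), and the case $s=0$ is immediate.

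The only delicate step is the multiplier estimate, uniform in $j$, for $e^{-t|\eta|^2/2}$ (and for $|\eta|^2e^{-r|\eta|^2/2}$) localised to dyadic annuli. It is proved by rescaling the annulus to unit size and bounding the $L^1$ norm of the inverse Fourier transform of a smooth cutoff times the rescaled symbol. Integrating by parts twice in Fourier space controls this $L^1$ norm uniformly. Since this is classical, I would cite \cite[Lemma 2.4]{bahouri_fourier_2011} rather than reprove it. Alternatively, the whole lemma is in \cite[Lemma 2.5]{issoglioMcKeanSDEsSingular2023} and could simply be quoted.
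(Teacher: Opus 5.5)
Your proof is correct, but it takes a different route from the paper. The paper gives no argument of its own: it quotes \cite[Theorem 2.10]{issoglio2024degenerate}, a Schauder estimate proved there in a more general (degenerate) setting and specialised via $d=0$, $N=1$ to the one-dimensional heat semigroup.

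You instead give the standard self-contained Littlewood--Paley proof. For (i), you use the heat-multiplier bound on dyadic annuli \cite[Lemma 2.4]{bahouri_fourier_2011} together with $\sup_{x>0}x^\zeta e^{-cx}<\infty$. For (ii), you reduce via $P_t-P_s=P_s(P_{t-s}-I)$ and the $L^\infty$-contractivity of $P_s$, then bound each block by $\min(1,h2^{2j})\le (h2^{2j})^\zeta$. That block bound comes from integrating $\partial_{xx}P_r\Delta_j f$ in $r$ and applying the Bernstein inequality for spectrally localised functions \cite[Lemma 2.1]{bahouri_fourier_2011}. All steps check out, including the treatment of the low-frequency block $j=-1$ in both parts.

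Your route makes the constants transparent. In (i), $t'$ enters $c_S^1$ only through the block $j=-1$, via $1\le t'^{\zeta}t^{-\zeta}$. In (ii), $c_S^2$ can be taken independent of $t'$, and the estimate even holds for $\zeta=1$. The paper's citation buys brevity, and it places the lemma inside a framework that also covers degenerate operators, which is not needed here.
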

For a proof of the above lemma see \cite[Theorem 2.10]{issoglio2024degenerate} with $d=0$ and $N=1$ therein.
\begin{remark}
In the paper, we will take $t = 1/N$ for $N \ge 1$, so it is enough to consider the above estimates with $t' = 1$. We will therefore omit the dependence on $t'$ when we refer to constants $c^1_S$ and $c^2_S$. 
\end{remark}

Below we recall Berstein inequality that bounds the norm of the derivative of a function (distribution) with the norm of the function itself, in a suitable Besov space. It is a simple consequence of \cite[Lemma 2.1]{bahouri_fourier_2011}.
\begin{lemma}[Bernstein inequality]\label{lem:bernstein}
  \leavevmode
  For any $\xi \in \mathbb R$ there exists $c_B = c_B(\xi)$ such that 
  \begin{equation}
    \| \partial_x f \|_{C^\xi} \leq c_B \| f \|_{C^{\xi + 1}}
  \end{equation}
  for all $f \in C^{\xi + 1}$.
\end{lemma}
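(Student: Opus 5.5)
The Bernstein inequality is most naturally proved at the level of the Littlewood--Paley blocks; we do not need the full frequency-localised Bernstein lemma, only its consequence for annuli and balls. Write $\Delta_j f := (\phi_j \hat f)^\vee$ for $j \ge -1$. Since differentiation is a Fourier multiplier, it commutes with the blocks: $\Delta_j(\partial_x f) = \partial_x(\Delta_j f)$. The frequency support of $\Delta_j f$ is contained in a ball of radius $C$ for $j=-1$ and in an annulus $\{|\zeta| \sim 2^j\}$ for $j \ge 0$. The key step is to invoke \cite[Lemma 2.1]{bahouri_fourier_2011}: for a function $g$ whose Fourier transform is supported in a ball of radius $\lambda$, we have $\|\partial_x g\|_{L^\infty} \le C_1 \lambda \|g\|_{L^\infty}$, with $C_1$ independent of $\lambda$ and $g$. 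Applied with $\lambda \sim 2^{j}$ (and $\lambda \sim 1$ for $j=-1$), this gives
\[
\|\Delta_j \partial_x f\|_{L^\infty} \le C 2^{j} \|\Delta_j f\|_{L^\infty}, \qquad j \ge -1,
\]
with $C$ independent of $j$ and $f$.

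Multiplying by $2^{j\xi}$ and taking the supremum over $j \ge -1$ then yields
\[
\|\partial_x f\|_{C^\xi} = \sup_j 2^{j\xi}\|\Delta_j \partial_x f\|_{L^\infty} \le C \sup_j 2^{j(\xi+1)}\|\Delta_j f\|_{L^\infty} = C\|f\|_{C^{\xi+1}}.
\]
This is the claimed inequality with $c_B = C$. The constant may depend on $\xi$ only through the choice of partition, and in fact here it is uniform in $\xi$.

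Two points need care. First, $\partial_x f$ must be a well-defined element of $\mathcal S'$ and the identity $\Delta_j \partial_x f = \partial_x \Delta_j f$ must be justified. Both follow by duality, since $\phi_j$ is smooth and compactly supported and $\Delta_j f$ is a smooth function of at most polynomial growth. Second, the uniformity of the Bernstein constant in $j$ must be checked. This is the only substantive point. It follows from scaling: write $\Delta_j f = \Delta_j f * (2^j h(2^j\cdot))$ with $\hat h \equiv 1$ on a fixed ball containing the support of $\phi_0$. Then
\[
\partial_x \Delta_j f = \Delta_j f * \bigl(2^{2j} h'(2^j \cdot)\bigr),
\]
and Young's inequality gives the factor $2^j\|h'\|_{L^1}$, which settles the uniformity.
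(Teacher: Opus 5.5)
Your proof is correct and follows the same route as the paper, which simply cites \cite[Lemma 2.1]{bahouri_fourier_2011} and leaves implicit the block-by-block application and the supremum over $j$ that you carry out. Your scaling and Young's-inequality argument for uniformity in $j$ just reproves that lemma in the case $k=1$, $p=q=\infty$; the low-frequency block $j=-1$ only changes the constant by a harmless factor.
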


We can view the function $\tilde F$ from Assumption \ref{ass:FF} as an operator on $C^\alpha$ for $\alpha \in (0, 1)$;
 for the sake of clarity, we will use $\tilde F$ to denote this operator, so that $\tilde F(f) := \tilde F(f(\cdot))$ for $f \in C^\alpha$.
Useful properties of such operators are stated in the following lemma.
\begin{lemma}[{\cite[Prop. 3.1]{issoglioNonlinearParabolicPDE2019}}]\label{lem:prop_op_F}
  Let $\varphi: \mathbb R \to \mathbb R$
  be a function such that $\varphi_x$ is Lipschitz and bounded.
  For $\alpha \in (0,1)$
  define the operator $\varphi: C^\alpha \to C^\alpha$
  as $\varphi(f)(\cdot) := \varphi(f(\cdot))$ for any $f \in C^\alpha$. 
  Then there is a constant $c_{\varphi}>0$ such that for all $f, g \in C^\alpha$
  \begin{equation*}
    \|\varphi(f) - \varphi(g)\|_{C^\alpha} \leq c_\varphi(1 + \|f\|_{C^\alpha}^2 + \|g\|_{C^\alpha}^2)^{1/2} \|f -g \|_{C^\alpha}
  \end{equation*}
  and
  \begin{equation*}
    \|\varphi(f)\|_{C^\alpha}\leq c_\varphi(1 + \|f\|_{C^\alpha}).
  \end{equation*}
\end{lemma}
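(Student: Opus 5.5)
The plan is a direct computation with the H\"older norm $\|\cdot\|_\alpha = \|\cdot\|_\infty + [\cdot]_\alpha$ from \eqref{eq:holder_norm}, which is equivalent to $\|\cdot\|_{C^\alpha}$ for $\alpha\in(0,1)$. Let $L_1 := \|\varphi_x\|_\infty$ and let $L_2$ be the Lipschitz constant of $\varphi_x$.

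\emph{Linear growth bound.} By the mean value theorem, $|\varphi(f(x))| \le |\varphi(0)| + L_1 |f(x)|$, so $\|\varphi(f)\|_\infty \le |\varphi(0)| + L_1\|f\|_\infty$. Likewise $|\varphi(f(x)) - \varphi(f(y))| \le L_1 |f(x)-f(y)|$, which gives $[\varphi(f)]_\alpha \le L_1 [f]_\alpha$. Adding these yields $\|\varphi(f)\|_\alpha \le \max(|\varphi(0)|, L_1)\,(1+\|f\|_\alpha)$.

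\emph{Lipschitz-type bound.} The sup part is immediate: $\|\varphi(f)-\varphi(g)\|_\infty \le L_1\|f-g\|_\infty$. For the seminorm I factorise the difference as
\[
\varphi(f(x)) - \varphi(g(x)) = h(x)\,k(x), \qquad h := f-g, \qquad k(x) := \int_0^1 \varphi_x\bigl(g(x)+\theta h(x)\bigr)\,d\theta .
\]
The product rule for H\"older seminorms gives $[hk]_\alpha \le [h]_\alpha \|k\|_\infty + \|h\|_\infty [k]_\alpha$. Here $\|k\|_\infty \le L_1$. Since $\varphi_x$ is Lipschitz,
\[
|k(x)-k(y)| \le L_2 \int_0^1 \bigl|(1-\theta)(g(x)-g(y)) + \theta(f(x)-f(y))\bigr|\, d\theta ,
\]
and therefore $[k]_\alpha \le L_2([f]_\alpha + [g]_\alpha) \le \sqrt2\, L_2 (\|f\|_\alpha^2+\|g\|_\alpha^2)^{1/2}$.

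Putting the pieces together,
\[
\|\varphi(f)-\varphi(g)\|_\alpha \le \bigl(2L_1 + \sqrt2 L_2 (\|f\|_\alpha^2+\|g\|_\alpha^2)^{1/2}\bigr)\|f-g\|_\alpha \le c_\varphi (1+\|f\|_\alpha^2+\|g\|_\alpha^2)^{1/2}\|f-g\|_\alpha .
\]
The last step uses $a + b s \le \sqrt{2}\max(a,b)\,(1+s^2)^{1/2}$. Transferring back to $\|\cdot\|_{C^\alpha}$ through the norm equivalence only changes the constant.

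There is no real obstacle here. The only point needing care is the seminorm of the product $hk$. This is why the bound is quadratic in the norms rather than uniform: the Lipschitz continuity of $\varphi_x$ enters through $[k]_\alpha$, which depends on $[f]_\alpha$ and $[g]_\alpha$.
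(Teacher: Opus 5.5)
Your proof is correct and complete. The paper does not prove this lemma; it only quotes it from \cite{issoglioNonlinearParabolicPDE2019}, so the only comparison is citation versus direct proof, and the citation buys the paper nothing beyond brevity.

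Your argument is the natural direct verification for $\alpha\in(0,1)$:
\begin{itemize}
\item the integral form of the mean value theorem gives the factorisation $\varphi(f)-\varphi(g)=(f-g)\,k$;
\item the product rule $[hk]_\alpha\le [h]_\alpha\|k\|_\infty+\|h\|_\infty[k]_\alpha$ handles the seminorm;
\item the Lipschitz continuity of $\varphi_x$ controls $[k]_\alpha$ through $[f]_\alpha+[g]_\alpha$.
\end{itemize}
This makes the lemma self-contained in the paper's setting. Working pointwise in $(x,y)$, as you do, is compatible with the paper's seminorm restricted to $|x-y|<1$.

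A few cosmetic points follow. Your computation in fact gives $[k]_\alpha\le \frac{L_2}{2}([f]_\alpha+[g]_\alpha)$. It also gives the slightly sharper estimate
\[
\|\varphi(f)-\varphi(g)\|_\alpha\le L_1\|f-g\|_\alpha+\tfrac{L_2}{2}\big([f]_\alpha+[g]_\alpha\big)\|f-g\|_\infty ,
\]
so the factor $2L_1$ can be replaced by $L_1$. The stated form then follows from your elementary inequality. Finally, since the statement uses one constant for both inequalities, take $c_\varphi$ to be the larger of your two constants. In the degenerate case where that maximum vanishes, any positive number will do.
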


 We now turn attention to a series of technical estimates.

\begin{lemma}[]\label{lem:holder_reg_pde}
  \leavevmode
  For $f \in C_T C^\xi$ for some $\xi \in \mathbb R$ let us define
  \begin{equation}\label{eqn:14}
    g(t) := \int_0^t P_{t-s} f(s) ds.
  \end{equation}
  Then $g \in C^{1/2}_{T} C^{\zeta}$  for any $\zeta < \xi+ 1$.
  Furthermore,
  $\|g\|_{C^{1/2}_{T} C^{\zeta}} \leq c_2 \| f \|_{C_T C^{\xi}}$,  where
  \begin{equation*}
    c_2 = c_2(T, \xi, \zeta)
    =
    \left(\frac{T^{1 - \frac{\zeta - \xi}{2}}}{1 - \frac{\zeta - \xi}{2}}
    + 2  + c_{S}^2 \frac{T^{1 - \frac{1 + \zeta - \xi}{2}}}{1 - \frac{1 + \zeta - \xi}{2}} \right) c_S^1.
  \end{equation*}
\end{lemma}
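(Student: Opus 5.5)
We estimate the sup-in-time norm and the time H\"older seminorm of $g$ separately, using only the two parts of Lemma~\ref{lem:schauder} with $t'=T$. The three summands of $c_2$ suggest exactly this split. The first summand comes from the sup norm, the constant $2$ from a short-time piece of the increment, and the last summand from a Schauder-difference piece. Throughout, $\zeta<\xi+1$ guarantees that $\frac{\zeta-\xi}{2}<1$ and $\frac{1+\zeta-\xi}{2}<1$, so every time integral below converges.

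\textbf{Sup norm.} For $\zeta\ge\xi$, part (i) of Lemma~\ref{lem:schauder} with exponent $\frac{\zeta-\xi}{2}$ gives $\|P_{t-s}f(s)\|_{C^\zeta}\le c_S^1 (t-s)^{-\frac{\zeta-\xi}{2}}\|f\|_{C_TC^\xi}$. Integrating over $s\in(0,t)$ yields
\[
\|g(t)\|_{C^\zeta}\le c_S^1 \frac{T^{1-\frac{\zeta-\xi}{2}}}{1-\frac{\zeta-\xi}{2}}\|f\|_{C_TC^\xi}.
\]
If $\zeta<\xi$, we use the embedding $C^\xi\subset C^\zeta$, or simply take $\zeta$ larger, since the claim for larger $\zeta$ implies it for smaller $\zeta$. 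Continuity of $t\mapsto g(t)$ in $C^\zeta$ follows from the H\"older estimate below.

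\textbf{H\"older seminorm.} For $0\le r<t\le T$ we write
\[
g(t)-g(r)=\int_r^t P_{t-s}f(s)\,ds+\int_0^r \big(P_{t-s}-P_{r-s}\big)f(s)\,ds=:I_1+I_2.
\]
For $I_1$ we apply part (i) of Lemma~\ref{lem:schauder} with exponent $\frac{\zeta-\xi}{2}$. This gives $\|I_1\|_{C^\zeta}\le c_S^1\|f\|\,(t-r)^{1-\frac{\zeta-\xi}{2}}/(1-\frac{\zeta-\xi}{2})$. We then bound this by a multiple of $(t-r)^{1/2}$ using $t-r\le T$; the paper's constant $2$ presumably arises from a cruder route to the same bound.

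For $I_2$ we factor $P_{t-s}-P_{r-s}=(P_{t-r}-I)P_{r-s}$ by the semigroup property. Part (ii) of Lemma~\ref{lem:schauder} with $\zeta$-exponent $\tfrac12$ gives $\|(P_{t-r}-I)h\|_{C^\zeta}\le c_S^2(t-r)^{1/2}\|h\|_{C^{\zeta+1}}$. Part (i) then gives $\|P_{r-s}f(s)\|_{C^{\zeta+1}}\le c_S^1(r-s)^{-\frac{1+\zeta-\xi}{2}}\|f(s)\|_{C^\xi}$. This exponent is less than $1$ precisely because $\zeta<\xi+1$. Integrating over $s\in(0,r)$ produces $c_S^2c_S^1\,T^{1-\frac{1+\zeta-\xi}{2}}/(1-\frac{1+\zeta-\xi}{2})\,(t-r)^{1/2}\|f\|_{C_TC^\xi}$. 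This is the third term of $c_2$.

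\textbf{Main obstacle.} The key point is the $I_2$ term. We must spend exactly one derivative's worth of regularity, $2\cdot\frac12$, to gain $(t-r)^{1/2}$, while keeping the resulting singularity integrable. This integrability is exactly what forces the strict inequality $\zeta<\xi+1$. The remaining work is bookkeeping of constants: summing the sup-norm bound and the two seminorm bounds gives $\|g\|_{C^{1/2}_TC^\zeta}\le c_2\|f\|_{C_TC^\xi}$.
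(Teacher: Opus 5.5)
Your argument is essentially the paper's. The sup-norm estimate via Lemma~\ref{lem:schauder}(i) is the same, and so is the splitting $g(t)-g(r)=I_1+I_2$. The treatment of $I_2$ also matches: the paper applies (i) with exponent $\eta=\frac{1+\zeta-\xi}{2}$ to $P_{r-s}$ and then (ii) to $P_{t-r}f(s)-f(s)$, which is your factorisation with the commuting operators taken in the other order, giving the same exponent and the same third term of $c_2$.

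The one real deviation is $I_1$, and it affects the explicit constant. The paper uses Schauder (i) with exponent $\tfrac12$:
\[
\|P_{t-s}f(s)\|_{C^\zeta}\le c_S^1(t-s)^{-1/2}\|f(s)\|_{C^{\zeta-1}}\le c_S^1(t-s)^{-1/2}\|f(s)\|_{C^{\xi}},
\]
where the second inequality uses $\zeta-1<\xi$. Then $\int_r^t(t-s)^{-1/2}\,ds=2(t-r)^{1/2}$, and that is where the $2$ comes from, with no dependence on $T$.

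Your route instead gives $c_S^1\,T^{\frac{1-\zeta+\xi}{2}}/\bigl(1-\frac{\zeta-\xi}{2}\bigr)$. This is at most $2c_S^1$ for $T\le 1$ (when $\zeta\ge\xi$) but exceeds it for large $T$, so neither route is uniformly cruder. Consequently, your closing sentence, that the three bounds sum to exactly the stated $c_2$, holds only after you replace your $I_1$ estimate with the paper's. Otherwise you prove the lemma with a slightly different constant.
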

\begin{proof}
For clarity, we recall the definition of the norm to bound:
  \begin{equation}\label{eq:g}
    \|g\|_{C^{1/2}_{T} C^{\zeta}}
    =
    \| g \|_{C_T C^{\zeta}} + [g]_{1/2, C^\zeta}
    =
    \sup_{t \in [0,T]}\| g(t) \|_{C^{\zeta}} + \sup_{t \neq r \in [0,T]} \frac{\| g(t) - g(r) \|_{C^\zeta}}{|t - r|^{1/2}}.
  \end{equation}
We will estimate each term separately. For the first term in \eqref{eq:g}, using Jensen's inequality we have
  \begin{equation*}
    \| g(t) \|_{C^{\zeta}} = \left\| \int_0^t P_{t - s} f(s) ds \right\|_{C^{\zeta}}
    \leq \int_0^t\left\| P_{t - s} f(s) \right\|_{C^{\zeta}} ds.
  \end{equation*}
Applying Lemma \ref{lem:schauder}(i) we obtain
  \begin{equation*}
    \| g(t) \|_{C^{\zeta}} \leq c^1_S \int_0^t (t - s)^{-\left(\frac{\zeta - \xi}{2}\right)} \| f (s) \|_{C^\xi} ds
    \le  c^1_S \frac{t^{1 - \frac{\zeta - \xi}{2}}}{1 - \frac{\zeta - \xi}{2}} \| f \|_{C_T C^{\xi}},
  \end{equation*}
where $c^1_S$ is the constant from the Schauder estimate and in the last inequality we use $\zeta < \xi + 1$ and $\| f (s) \|_{C^\xi} \le \| f \|_{C_T C^{\xi}}$. We take the supremum over $t$ and conclude
  \begin{equation}
    \label{eq:gctcnu}
    \| g \|_{C_T C^{\zeta}} \leq c^1_S \frac{T^{1 - \frac{\zeta - \xi}{2}}}{1 - \frac{\zeta - \xi}{2}} \| f \|_{C_T C^{\xi}}.
  \end{equation}

For the second term in \eqref{eq:g}, let us fix  $0 \leq r < t \leq T$. We have 
  \begin{equation} \label{eq:bound_semigroups}
    \begin{aligned}
      \| g(t) - g(r) \|_{C^\zeta} &= \left\| \int_0^t P_{t - s} f(s) ds - \int_0^r P_{r - s} f(s) ds \right\|_{C^\zeta}\\
				&= \left\| \int_r^t P_{t - s} f(s) ds + \int_0^r \left[P_{t - s} f(s) - P_{r - s} f(s)\right] ds \right\|_{C^\zeta}\\
				&\leq \int_r^t \left\| P_{t - s} f(s) \right\|_{C^\zeta} ds 
				+ \int_0^r \left\| P_{r - s} \left[P_{t - r} f(s) - f(s)\right] \right\|_{C^\zeta} ds,\\
    \end{aligned}
  \end{equation}
  where the last inequality comes from the semigroup property and Jensen's inequality. For the first term on the right-hand side we apply Lemma \ref{lem:schauder}(i) to get
  \begin{equation*}
    \int_r^t \left\| P_{t - s} f(s) \right\|_{C^\zeta} ds
    \leq
    \int_r^t c_{S}^1 (t - s)^{-1/2} \left\| f(s) \right\|_{C^{\zeta - 1}} ds.
  \end{equation*}
Noting that  $\xi > \zeta - 1$, taking the supremum over $s$  and computing the integral we get
\begin{equation}\label{eq:bound_sg_1st}
\int_r^t \left\| P_{t - s} f(s) \right\|_{C^\zeta} ds
\leq \int_r^t c_{S}^1 (t - s)^{-1/2} \left\| f(s) \right\|_{C^{\xi}} ds
\leq c_{S}^1\, 2(t - r)^{1/2} \left\| f \right\|_{C_T C^{\xi}}.
\end{equation}
For the second term in  \eqref{eq:bound_semigroups} we apply Lemma \ref{lem:schauder}(i) and (ii) for some $\eta\in(0,1)$ to be specified later
\begin{equation*}
\begin{aligned}
      \int_0^r \left\| P_{r - s} \left[P_{t - r} f(s) - f(s)\right] \right\|_{C^\zeta} ds
      &\leq
      \int_0^r c_{S}^1 (r - s)^{-\eta} \left\| P_{t - r} f(s) - f(s) \right\|_{C^{\zeta - 2 \eta}} ds\\
      &\leq
      \int_0^r c_{S}^1 (r - s)^{-\eta} c_{S}^2(t - r)^{1/2} \left\| f(s) \right\|_{C^{\zeta - 2 \eta + 1}} ds\\
      &=
      c_{S}^1 c_{S}^2 (t - r)^{1/2} \left\| f \right\|_{C_T C^{\zeta - 2 \eta + 1}} \int_0^r (r - s)^{-\eta} ds\\
&=  c_{S}^1 c_{S}^2 \frac{r^{1 - \eta}}{1 - \eta} (t - r)^{1/2} \left\| f \right\|_{C_T C^{\zeta - 2 \eta + 1}},
\end{aligned}
\end{equation*}
where the last inequality comes from taking the supremum over $s$. Setting $\eta = \frac{1 + \zeta - \xi}{2} $ so as $\zeta - 2 \eta + 1 = \xi$ we obtain
\begin{equation}
    \label{eq:bound_more_semigroups}
    \int_0^r \left\| P_{r - s} \left[P_{t - r} f(s) - f(s)\right] \right\|_{C^\zeta} ds
    \leq
     c_{S}^1 c_{S}^2 \frac{r^{1 - \frac{1 + \zeta - \xi}{2}}}{1 - \frac{1 + \zeta - \xi}{2}} (t - r)^{1/2} \left\| f \right\|_{C_T C^{\xi}}.
\end{equation}
Stitching \eqref{eq:bound_sg_1st} and \eqref{eq:bound_more_semigroups} together we get
\begin{equation*}
    \|g(t) - g(r)\|_{C^\zeta} \leq 
    2 c_{S}^1 (t - r)^{1/2} \left\| f \right\|_{C_T C^{\xi}}
    +
    c_S^1 c_{S}^2 \frac{r^{1 - \frac{1 + \zeta - \xi}{2}}}{1 - \frac{1 + \zeta - \xi}{2}} (t - r)^{1/2} \left\| f \right\|_{C_T C^{\xi}}.
  \end{equation*}
Thus
\begin{equation}\label{eq:ghalfsemi}
[g]_{1/2, C^\zeta} \leq
    \left(2 c_S^1
      +
    c_S^1c_{S}^2 \frac{T^{1 - \frac{1 + \zeta - \xi}{2}}}{1 - \frac{1 + \zeta - \xi}{2}}\right)
    \left\| f \right\|_{C_T C^{\xi}}.
  \end{equation}  
Combinining \eqref{eq:gctcnu} and \eqref{eq:ghalfsemi} we conclude. 
\end{proof}

The result above allows us to increase the regularity in time at the cost of reducing the regularity in space and it will be used to find bounds involving the solution to the regularised Fokker-Planck equation $\rho^N$.

\begin{lemma}\label{lem:holder_reg_der_pde}
For $f \in L^\infty_T C^\xi$ for some $\xi \in \mathbb R$ define $g$ by \eqref{eqn:14}.
Then $g \in L^\infty_T C^{\zeta}$ for any $\zeta < \xi + 2$, and $\|g\|_{L^\infty_T C^\zeta} \leq c_3\| f \|_{L_T^\infty  C^\xi}$, where
\begin{equation*}
c_3 = c_3(T, \xi, \zeta) = c^1_S \frac{T^{1 - \frac{\zeta - \xi}{2}}}{1 - \frac{\zeta - \xi}{2}}.
\end{equation*}
\end{lemma}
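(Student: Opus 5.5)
This is the first half of the argument for Lemma~\ref{lem:holder_reg_pde}, now applied for almost every time rather than uniformly in time. We have to allow $\zeta$ up to $\xi+2$, so we cannot afford any time regularity, and we only need a sup bound. Fix $t\in[0,T]$ outside the null set where $\|f(s)\|_{C^\xi}\le \|f\|_{L^\infty_T C^\xi}$ may fail; it is only the integrand that needs this bound for a.e.\ $s$. First we note that $g(t)$ is well defined as a Bochner integral in $C^\zeta$, or in $\mathcal S'$ tested against Schwartz functions. We then move the norm inside the integral:
\[
\|g(t)\|_{C^\zeta}\le \int_0^t \|P_{t-s}f(s)\|_{C^\zeta}\,ds .
\]
This is the same Jensen/Minkowski step used in the proof of Lemma~\ref{lem:holder_reg_pde}.

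Next we apply the Schauder estimate, Lemma~\ref{lem:schauder}(i), with $\zeta$ in place of $\xi$ and with exponent $\zeta_0:=\frac{\zeta-\xi}{2}$, so that $\zeta-2\zeta_0=\xi$. This needs $\zeta_0\ge0$, and the case $\zeta\le\xi$ is trivially covered. It also uses $t-s\le T$, so the constant $c_S^1=c_S^1(\zeta,\zeta_0,T)$ is the relevant one. This gives
\[
\|P_{t-s}f(s)\|_{C^\zeta}\le c_S^1 (t-s)^{-\frac{\zeta-\xi}{2}}\|f(s)\|_{C^\xi}\le c_S^1 (t-s)^{-\frac{\zeta-\xi}{2}}\|f\|_{L^\infty_T C^\xi}
\]
for a.e.\ $s\in(0,t)$. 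The condition $\zeta<\xi+2$ is exactly $\frac{\zeta-\xi}{2}<1$. That makes the singularity integrable, and
\[
\int_0^t (t-s)^{-\frac{\zeta-\xi}{2}}ds=\frac{t^{1-\frac{\zeta-\xi}{2}}}{1-\frac{\zeta-\xi}{2}}\le \frac{T^{1-\frac{\zeta-\xi}{2}}}{1-\frac{\zeta-\xi}{2}} .
\]

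Taking the (essential) supremum over $t$ then yields $\|g\|_{L^\infty_T C^\zeta}\le c_3\|f\|_{L^\infty_T C^\xi}$ with the stated $c_3$. The only mild obstacle is measurability and essential boundedness of $t\mapsto g(t)$ in $C^\zeta$. The bound above actually holds for every $t$, so $g$ is bounded everywhere. For measurability I would argue that $g$ is continuous into a weaker space such as $C^{\zeta-\varepsilon}$, or simply note that the estimate controls the sup norm directly. When $\zeta$ is negative, the identification of $\|\cdot\|_{C^\zeta}$ with the Besov norm from Section~\ref{sec:prel} causes no extra difficulty, since Lemma~\ref{lem:schauder} is stated for all real indices.
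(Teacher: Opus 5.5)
Your proposal is correct and follows the paper's own proof. Both use Minkowski's (Jensen's) inequality to bring the $C^\zeta$-norm inside the integral, then Lemma~\ref{lem:schauder}(i) with exponent $\frac{\zeta-\xi}{2}$, integrability of $(t-s)^{-\frac{\zeta-\xi}{2}}$ from $\zeta<\xi+2$, and a supremum over $t$. Your extra remarks (a.e.\ bounds in $s$, measurability of $g$, the dependence of $c_S^1$ on $T$, the edge case $\zeta\le\xi$) are refinements the paper omits and do not change the argument.
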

\begin{proof}
We only need to derive a bound for $\| g(t) \|_{C^\zeta}$ which is uniform in $t \in [0, T]$. We have
\begin{equation*}
    \begin{split}
      \| g(t) \|_{C^\zeta} 
      &= \left\| \int_0^t P_{t - s} f(s) ds \right\|_{C^\zeta}
      \leq \int_0^t \left\| P_{t - s} f(s) \right\|_{C^\zeta} ds \\
      &\leq \int_0^t c^1_S (t - s)^{-\frac{\zeta - \xi}{2}} \| f \|_{C^\xi} ds
      = c^1_S \frac{t^{1 - \frac{\zeta - \xi}{2}}}{1 - \frac{\zeta - \xi}{2}} \| f \|_{C^\xi}
      \le c^1_S \frac{T^{1 - \frac{\zeta - \xi}{2}}}{1 - \frac{\zeta - \xi}{2}} \| f \|_{C^\xi},
    \end{split}
  \end{equation*}
where $c^1_S$ is the constant from Lemma \ref{lem:schauder}.
\end{proof}

\section{Regularity properties  of \texorpdfstring{$\rho^N$}{rho\textasciicircum N}}\label{sec:rho}

In this section, we derive estimates for $1/2$-H\"older (in time) norm of $\rho^N$ and for the spatial derivative of $\rho^N$, which will be needed in the following sections.

We start by citing known results. The bounds established in \cite[Eqs (26), (27) and (28)]{chaparro2023} via Schauder estimates read, in our setting, as follows: for any $\epsilon > 0$ we have
\begin{align}
  [b^N]_{\frac{1}{2}, L^{\infty}} &\leq \cb N^{\frac{\epsilon + \hat\beta}{2}} \| b \|_{C^{1/2}_T C^{-\hat\beta}}
  \label{eq:snorm_bn}\\
  \|b^N\|_{C_T L^{\infty}} &\leq \cb N^{\frac{\epsilon + \hat\beta}{2}} \| b \|_{C_T C^{-\hat\beta}}
  \label{eq:norm_bn}\\
  \| b^N_x\|_{C_T L^{\infty}} &\leq \cb N^{\frac{\epsilon + \hat\beta + 1}{2}} \| b \|_{C_T C^{-\hat\beta}},
  \label{eq:norm_dbn}
\end{align}
where the constant $\cb$ depends on $\epsilon$, $\hat\beta$ and $T$. In what follows, $\epsilon$ will appear in the main result of the paper and in many intermediate estimates. We do not know the limit of $\cb$ as $\epsilon \to 0$, hence, we cannot set $\epsilon = 0$ in the above estimates. However, $\epsilon$ is meant to be arbitrarily small but fixed.

Below we will obtain bounds on $\rho^N$ which will enable us to derive in the next section analogous estimates as in \eqref{eq:snorm_bn}-\eqref{eq:norm_dbn} but for $F(\rho^N) b^N$.  By Proposition \ref{prop:mildSoln} we know that $\rho$ and $\rho^N$ are continuous in time with values in $L^\infty$, but we also need estimates for the time-H\"older norm, and for the $L^\infty$ norm of the derivative in space with an explicit dependence on $N$.

\begin{lemma}
  \label{lem:HN}
  Let us denote by $H^N = \tilde F (\rho^N) b^N$, and its derivative in space by $H^N_x$, then for $\eta \in (0, 1-\beta)$
  \begin{equation}\label{eqn:25}
      H^N \in C_T C^\eta
      \text{\quad and \quad}
      H^N_x \in C_T C^{\eta - 1}.
  \end{equation}
  Furthermore,
  \begin{equation}\label{eqn:26}
    \|H^N_x\|_{L_T^\infty C^{\eta - 1}}
     \leq
    c_4
    N^{\frac{\eta + \hat\beta}{2}},
  \end{equation}
  where the constant $c_4$ depends on $T, \|\rho_0\|_{C^{\frac12\vee\eta}}, \sup_N \|b^N\|_{C_T C^{-\beta}} $ and on $\|b\|_{C_T C^{-\hat\beta}}$.
\end{lemma}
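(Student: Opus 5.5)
The plan is to bound $H^N$ in $C_T C^\eta$ first, with an explicit power of $N$, and then obtain the bound on $H^N_x$ from the Bernstein inequality (Lemma \ref{lem:bernstein}) with $\xi=\eta-1$:
\[
\|H^N_x(t)\|_{C^{\eta-1}} \le c_B \|H^N(t)\|_{C^\eta}.
\]
Continuity in time of $H^N_x$ with values in $C^{\eta-1}$ then follows from continuity of $H^N$ in $C^\eta$, since $\partial_x$ is a bounded linear map $C^\eta\to C^{\eta-1}$. So everything reduces to showing $H^N\in C_TC^\eta$ with $\|H^N\|_{C_TC^\eta}\le c\,N^{\frac{\eta+\hat\beta}{2}}$.

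For the product, $\eta\in(0,1)$, so $C^\eta$ is an algebra (a Banach algebra for the Hölder norm \eqref{eq:holder_norm}). This gives
\[
\|H^N(t)\|_{C^\eta}\le c\,\|\tilde F(\rho^N(t))\|_{C^\eta}\,\|b^N(t)\|_{C^\eta}.
\]

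The drift factor is handled by Lemma \ref{lem:schauder}(i) with $\zeta=\frac{\eta+\hat\beta}{2}$:
\[
\|b^N(t)\|_{C^\eta}=\|P_{1/N}b(t)\|_{C^\eta}\le c_S^1 N^{\frac{\eta+\hat\beta}{2}}\|b\|_{C_TC^{-\hat\beta}}.
\]
This is exactly the power required in \eqref{eqn:26}, and here no $\epsilon$-loss appears because we are not passing to $L^\infty$. Continuity in $t$ of $b^N$ in $C^\eta$ follows from the same estimate applied to $b(t)-b(s)$.

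For the nonlinear factor, Lemma \ref{lem:prop_op_F} gives $\|\tilde F(\rho^N(t))\|_{C^\eta}\le c(1+\|\rho^N(t)\|_{C^\eta})$, together with continuity in $t$ via the Lipschitz-type estimate in the same lemma. So the key point is a bound on $\sup_N\|\rho^N\|_{C_TC^\eta}$ that is \emph{uniform in $N$}. This is the step I expect to be the main obstacle.

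Since $\eta$ may exceed $\alpha$, I would not rely on \eqref{eq:rhoNminrho} alone. Instead I would go back to the mild formulation \eqref{eq:mild_soln_rfp} and argue as follows.
\begin{enumerate}
\item[(a)] Fix $\alpha\in(\beta,1-\beta)$. By \eqref{eq:rhoNminrho}, or by the uniform a priori bound in \cite{issoglioMcKeanSDEsSingular2023}, $\sup_N\|\rho^N\|_{C_TC^\alpha}<\infty$.
\item[(b)] Since $\alpha>\beta$, the product of $\tilde F(\rho^N)\in C^\alpha$ with $b^N\in C^{-\beta}$ is well defined by Bony's paraproduct. It lies in $C^{-\beta}$ uniformly in $N$, in terms of $\sup_N\|b^N\|_{C_TC^{-\beta}}$.
\item[(c)] Hence $\partial_x[\tilde F(\rho^N)b^N]\in C^{-\beta-1}$ uniformly. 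Lemma \ref{lem:holder_reg_der_pde} then puts the integral term of \eqref{eq:mild_soln_rfp} in $C^{\zeta}$ for every $\zeta<1-\beta$, in particular $\zeta=\eta$.
\item[(d)] The term $P_t\rho_0$ is bounded in $C^\eta$ by $\|\rho_0\|_{C^\eta}$. Continuity in time comes from Lemma \ref{lem:holder_reg_pde}, which in fact gives $1/2$-Hölder continuity in $C^{\zeta}$ for $\zeta<-\beta$. For continuity in $C^\eta$ itself I would interpolate, or use the $\frac12\vee\eta$ regularity of $\rho_0$ and standard semigroup continuity.
\end{enumerate}
This produces $\sup_N\|\rho^N\|_{C_TC^\eta}\le c$, with $c$ depending on $T$, $\|\rho_0\|_{C^{\frac12\vee\eta}}$ and $\sup_N\|b^N\|_{C_TC^{-\beta}}$.

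Combining the three bounds gives $\|H^N\|_{C_TC^\eta}\le c_4' N^{\frac{\eta+\hat\beta}{2}}$, and Bernstein then yields \eqref{eqn:26} with $c_4=c_Bc_4'$.
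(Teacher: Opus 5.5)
Your proposal is correct and follows the paper's proof essentially step by step: the product estimate in $C^\eta$, Schauder for $b^N$ giving the factor $N^{\frac{\eta+\hat\beta}{2}}$, Lemma \ref{lem:prop_op_F} for $\tilde F(\rho^N)$, a uniform-in-$N$ bound on $\|\rho^N\|_{C_TC^\eta}$, and Bernstein for $H^N_x$. The one difference is that your bootstrap (a)--(d) is not needed: since $\eta<1-\beta$ and $\beta<\frac12$, the exponent $\frac12\vee\eta$ already lies in $(\beta,1-\beta)$, so the paper simply applies the a priori bound \cite[Proposition 3.3]{issoglioMcKeanSDEsSingular2023} (equivalently \eqref{eq:rhoNminrho}) with $\alpha=\frac12\vee\eta$, and it takes continuity of $\rho^N$ in $C^\eta$ directly from Proposition \ref{prop:mildSoln}.
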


\begin{proof}
First we show that for each $t\in[0,T]$ we have $H^N (t)  \in C^\eta$ and we bound its norm uniformly in $t$. Recalling that $  \|H^N (t)  \|_{C^\eta} =     \|H^N (t)  \|_{L^\infty}     +     [H^N (t)  ]_\eta$,  we bound the first term as $\|H^N (t)  \|_{L^\infty} \le \|(\tilde F(\rho^N))(t)\|_{L^\infty} \|b^N(t)\|_{L^\infty}$ and the second term as
  \begin{equation}\label{eq:seminorm_FRhob}
   [H^N (t)  ]_\eta =  [(\tilde F(\rho^N) b^N)(t)]_\eta
    \leq
    \|b^N(t)\|_{L^\infty} [\tilde F(\rho^N)(t)]_\eta
    +
    [b^N(t)]_\eta \|\tilde F(\rho^N)(t)\|_{L^\infty}.
  \end{equation}
Thus we have
  \begin{equation}\label{eq:H}
    \begin{aligned}
\|H^N(t)\|_{C^\eta}
      &\leq
      \|(\tilde F(\rho^N))(t)\|_{L^\infty} \|b^N(t)\|_{L^\infty}
      +
      \|b^N(t)\|_{L^\infty} [\tilde F(\rho^N)(t)]_\eta
      +
      [b^N(t)]_\eta \|\tilde F(\rho^N)(t)\|_{L^\infty}
      \\
      &=
      \|b^N(t)\|_{L^\infty}
      (\|(\tilde F(\rho^N))(t)\|_{L^\infty} + [\tilde F(\rho^N)(t)]_\eta)
      +
      [b^N(t)]_\eta \|\tilde F(\rho^N)(t)\|_{L^\infty}
      \\
      &\leq
      \|b^N(t)\|_{L^\infty}
      \|(\tilde F(\rho^N))(t)\|_{C^\eta}
      +
      [b^N(t)]_\eta \|\tilde F(\rho^N)(t)\|_{C^\eta}
      \\
      &=
      \|(\tilde F(\rho^N))(t)\|_{C^\eta}
      \|b^N(t)\|_{C^\eta}.
    \end{aligned}
  \end{equation}
Schauder estimates (Lemma  \ref{lem:schauder}) yield 
\[
 \|b^N(t)\|_{C^\eta} \leq c_S^1 N^{ \frac{\hat\beta + \eta}2 }\|b(t)\|_{C^{-\hat\beta}} \leq  c_S^1 N^{ \frac{\hat\beta + \eta}2 }\|b\|_{C_T C^{-\hat\beta}} .
\]
We apply Lemma \ref{lem:prop_op_F} for $\tilde F$ with $\alpha = \eta$  to obtain 
\[
\|\tilde F (\rho^N(t))\|_{C^\eta} \le 
c_\varphi(1 + \|\rho^N(t)\|_{C^\eta}).
\]
By \cite[Proposition 3.3]{issoglioMcKeanSDEsSingular2023} (choosing  $\alpha=\frac12\vee \eta$, and using that $b^N \in C_TC^{-\beta}$ and $\rho_0 \in C^{\frac12\vee \eta}$) there is an increasing function $\kappa$ (depending also on $T, \|\rho_0\|_{C^{\frac12\vee\eta}}$) such that
  \begin{equation*}
\sup_{t\in[0,T]}\|\rho^N(t)\|_{C^\eta}=   \|\rho^N\|_{C_T C^{\eta}} \leq   \|\rho^N\|_{C_T C^{\frac12\vee \eta}} \leq \kappa \left(\|b^N\|_{C_T C^{-\beta}}\right) \leq \kappa \left( \sup_N \|b^N\|_{C_T C^{-\beta}}\right),
  \end{equation*}
and the latter is finite because $b^N \to b$ in $C_T C^{-\beta}$ as $N \to \infty$.
Putting everything together in \eqref{eq:H} and taking the sup over $t$ we obtain
\[
\|H^N\|_{L_T^\infty C^{\eta }} = \|\tilde F(\rho^N) b^N\|_{L^\infty_T C^\eta}
\leq 
c_\varphi \Big(1 +  \kappa \big( \sup_N \|b^N\|_{C_T C^{-\beta}}\big) \Big)
c_S^1 N^{ \frac{\hat\beta + \eta}2 }\|b\|_{C_T C^{-\hat\beta}}
= C  N^{ \frac{\hat\beta + \eta}2 },
\]
where the constant $C$ depends on the Schauder  constant $c^1_S$, the constant $c_\varphi$, the final time $T$, and on the norms $\|\rho_0\|_{C^{\frac12\vee\eta}}, \sup_N \|b^N\|_{C_T C^{-\beta}} $ and  $\|b\|_{C_T C^{-\hat\beta}}$, but crucially not on $N$.
To obtain \eqref{eqn:26} it is now enough to use  Bernstein inequality (Lemma \ref{lem:bernstein}) which ensures $\|H^N_x\|_{L_T^\infty C^{\eta - 1}} \leq c_B \|H^N\|_{L_T^\infty C^{\eta}}$. This gives \eqref{eqn:26} where $c_4:= C c_B$.
 
 Now let us prove the continuity in $t$ of $H^N$. Take $t,s \in [0, T]$. 
 We know that $H^N(s)$ and $H^N(t)$ belong to $C^\eta$, and with similar computations as in \eqref{eq:H} we have
  \begin{equation}\label{eqn:L1}
  \begin{aligned}
    \| &H^N(t)  - H^N(s)\|_{C^\eta} \\
    &=\| (\tilde F(\rho^N) b^N)(t) - (\tilde F(\rho^N) b^N)(s)\|_{C^\eta}\\
    &\leq
    \| ((\tilde F(\rho^N))(t) - (\tilde F(\rho^N))(s)) b^N(t)\|_{C^\eta}
    +
    \|(\tilde F(\rho^N))(s) (b^N(t) - b^N(s))\|_{C^\eta}\\
    &\leq
    \| (\tilde F(\rho^N))(t) - (\tilde F(\rho^N))(s)\|_{C^\eta}
    \|b^N(t)\|_{C^\eta}
    +
    \|(\tilde F(\rho^N))(s)\|_{C^\eta}
    \|b^N(t) - b^N(s)\|_{C^\eta}.
\end{aligned}
\end{equation}
The second term in \eqref{eqn:L1} tends to zero as $t\to s$ since $b^N\in C_T C^\eta$ and  $ \|(\tilde F(\rho^N))(s)\|_{C^\eta} \leq  \|\tilde F(\rho^N)\|_{L_T^\infty C^\eta} <\infty$ by the computations above.
Let us consider now the first term in \eqref{eqn:L1}. For fixed $t, s$ we know that $\rho^N(t)$ and $\rho^N(s)$ are elements of $C^\eta$ so by Lemma \ref{lem:prop_op_F} we have the bound
\begin{align*}
&\| (\tilde F(\rho^N))(t) - (\tilde F(\rho^N))(s)\|_{C^\eta}\\
&\leq
    c_{\varphi}
    \big(1 + \|\rho^N(t)\|_{C^\eta}^2 + \|\rho^N(s)\|_{C^\eta}^2\big)^{1/2}
    \|\rho^N(t) - \rho^N(s)\|_{C^\eta}\\
&\leq
    c_{\varphi}
    \big(1 + \|\rho^N(t)\|_{C^{\eta\vee \frac12}}^2 + \|\rho^N(s)\|_{C^{\eta\vee \frac12}}^2\big)^{1/2}
    \|\rho^N(t) - \rho^N(s)\|_{C^\eta} \\
&\le c_\varphi (1 + 2 \sup_N \|\rho^N\|_{C_T C^{\eta \vee \frac12}} ) \|\rho^N(t) - \rho^N(s)\|_{C^\eta},
\end{align*}
 where  $\sup_N\|\rho^N\|_{C_T C^{\eta \vee \frac12}}  $ is finite since  $\rho^N \to \rho$ in $C^{\eta \vee \frac12} $ as $\hat\beta< \eta\vee \frac12<1-\hat\beta$. Using that  $\rho^N \in C_TC^\eta$, the right-hand side converges to $0$ as $t \to s$. This proves that $H^N \in C_T C^\eta$.  
 By Bernstein inequality, for every $s,t\in[0,T]$ we have
  \begin{equation*}
  \|H_x^N(t) - H_x^N(s)\|_{C^{\eta-1}}  =  \|\partial_x( H^N(t) - H^N(s))\|_{C^{\eta-1}}  \leq   c_B\|H^N(t) - H^N(s)\|_{ C^\eta},
  \end{equation*}
  therefore the continuity in time of the derivative $H^N_x$  in $C^{\eta-1}$ holds by the  continuity in time of  $H^N$  in $C^{\eta}$.
\end{proof}

Using Lemma \ref{lem:holder_reg_pde}, we will show that $\rho^N$ is $\frac12$-H\"older continuous in $t$.
\begin{lemma}\label{lem:rhoN_time_reg}
We have  $\rho^N \in C^{1/2}_T L^\infty$ and for any $\eta\in(0, (1-\beta) \wedge \nu)$
\begin{equation*}
    [\rho^N]_{1/2, L^\infty} 
    \leq
    c_5 + c_6 N^{\frac{\eta + \hat\beta}{2}},
\end{equation*}
where $c_5 = c^2_S \|\rho_0\|_{C^{1 + \nu}}$ and $c_6 = c_2 c_4$.
\end{lemma}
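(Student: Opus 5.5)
Starting from the mild formulation \eqref{eq:mild_soln_rfp}, I would write $\rho^N(t) = P_t\rho_0 - g(t)$ with
\[
g(t) := \int_0^t P_{t-s} H^N_x(s)\, ds, \qquad H^N = \tilde F(\rho^N) b^N .
\]
By the triangle inequality,
\[
\|\rho^N(t)-\rho^N(r)\|_{L^\infty} \le \|P_t\rho_0 - P_r\rho_0\|_{L^\infty} + \|g(t)-g(r)\|_{L^\infty}.
\]
I then bound the $1/2$-H\"older seminorm of each term separately, which matches the two constants $c_5$ and $c_6$ in the statement.

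For the initial-condition term I apply Lemma~\ref{lem:schauder}(ii). I take $\xi$ equal to $0$ or slightly positive, with $\zeta = 1/2$, so that $\xi + 2\zeta = 1 \le 1+\nu$. This gives
\[
\|P_t\rho_0 - P_r\rho_0\|_{C^\xi} \le c_S^2 (t-r)^{1/2}\|\rho_0\|_{C^{1+\nu}},
\]
using that the $C^{1}$ norm (or the $C^{\xi+1}$ norm with $\xi\le\nu$) is dominated by the $C^{1+\nu}$ norm. It remains to dominate $\|\cdot\|_{L^\infty}$ by $\|\cdot\|_{C^\xi}$. For $\xi\in(0,1)$ this is immediate from \eqref{eq:holder_norm}. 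This gives $c_5 = c_S^2\|\rho_0\|_{C^{1+\nu}}$, up to embedding constants that I absorb.

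For the integral term I use Lemma~\ref{lem:holder_reg_pde} with $f = H^N_x$ and $\xi = \eta - 1$. This is admissible because Lemma~\ref{lem:HN} gives $H^N_x \in C_T C^{\eta-1}$ for $\eta\in(0,1-\beta)$. The lemma lets me pick any $\zeta < \eta$, and I take $\zeta\in(0,\eta)$ so that $C^\zeta \hookrightarrow L^\infty$ with the norm inequality from \eqref{eq:holder_norm}. Then
\[
[g]_{1/2,L^\infty} \le \|g\|_{C^{1/2}_TC^\zeta} \le c_2\|H^N_x\|_{C_TC^{\eta-1}} \le c_2 c_4 N^{\frac{\eta+\hat\beta}{2}},
\]
by \eqref{eqn:26}. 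Continuity of $H^N_x$ in time (Lemma~\ref{lem:HN}) ensures that the $L^\infty_T$ and $C_T$ norms agree. This gives $c_6 = c_2c_4$. Summing the two bounds yields the seminorm estimate, and boundedness of $\rho^N$ in $C_TL^\infty$ (from Proposition~\ref{prop:mildSoln}) then gives $\rho^N\in C^{1/2}_TL^\infty$.

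The main obstacle is matching the regularity indices. Lemma~\ref{lem:holder_reg_pde} requires $\zeta<\xi+1=\eta$, and I need $\zeta>0$ so that $C^\zeta$ controls $L^\infty$; both hold because $\eta>0$. The restriction $\eta<\nu$ is harmless for these indices and is presumably needed only to keep the exponent consistent with the later estimates. The only real subtlety is being careful that $L^\infty$ is controlled by a positive-index H\"older norm, rather than by $C^0$ in the Besov sense, where that control fails.
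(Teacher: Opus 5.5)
Your proposal is correct and follows the paper's proof essentially step for step. Both split the mild formulation, apply Lemma~\ref{lem:schauder}(ii) with $\zeta=1/2$ to $P_t\rho_0$, apply Lemma~\ref{lem:holder_reg_pde} with $f=H^N_x$ and a target index in $(0,\eta)$, and finish with $[\cdot]_{1/2,L^\infty}\le\|\cdot\|_{C^{1/2}_TC^{\eta-\delta}}$.

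The only nuance concerns the hypothesis $\eta<\nu$. The paper bounds both terms in the same $C^{\eta-\delta}$ norm, which gives the stronger seminorm estimate reused in the subsequent corollary. That shared index is where $\eta<\nu$ enters, through $\eta-\delta+1<1+\nu$. Your choice of a separate index $\xi\in(0,\nu]$ for the initial-condition term does not need it.
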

\begin{proof}
We observe that $\rho^N \in C_TC^\alpha$ for some $\alpha>0$ (see Proposition \ref{prop:mildSoln}), so $\rho^N \in C_T L^\infty$. We only need to prove the H\"older continuity in time, i.e., we need to bound the seminorm $[\rho^N]_{\frac{1}{2}, L^\infty}$. Recall that by \eqref{eq:mild_soln_rfp}, $\rho^N(t) = P_t \rho_0 - \int_0^t P_{t - s} \partial_x H^N(t) ds$. Without any increase in complexity, we will provide a stronger bound: for any $\delta \in (0, \eta)$ we have
  \begin{equation}\label{eqn:31}
    \begin{aligned}
      \relax
      [\rho^N]_{\frac{1}{2}, C^{\eta - \delta}}
      &= \sup_{t \neq s}\frac{\|\rho^N(t) - \rho^N(s)\|_{C^{\eta - \delta}}}{|t - s|^{1/2}}\\
      &= \sup_{t \neq s}\frac{\left\|P_t \rho_0 - P_s \rho_0 + 
      \int_0^t P_{t - u} H^N_x(u) du - \int_0^s P_{s - u} H^N_x(u) du \right\|_{C^{\eta - \delta}}}{|t - s|^{1/2}}\\ 
      &\leq \sup_{t \neq s} \frac{\|P_t \rho_0 - P_s \rho_0\|_{C^{\eta - \delta}}}{|t - s|^{1/2}} + \sup_{t \neq s}\frac{\left\|\int_0^t P_{t - u} H^N_x(u) du - \int_0^s P_{s - u} H^N_x(u) du\right\|_{C^{\eta - \delta}}}{|t - s|^{1/2}}.
    \end{aligned}
\end{equation}
For the first term, we use the Schauder estimates (Lemma \ref{lem:schauder}(ii) with $\zeta = 1/2$):
\[
\|P_t \rho_0 - P_s \rho_0\|_{C^{\eta-\delta}}
    \leq c^2_S|t - s|^{1/2}\| \rho_0 \|_{C^{\eta - \delta + 1}} \leq c^2_S|t - s|^{1/2}\| \rho_0 \|_{C^{1 + \nu}} = c_5 |t - s|^{1/2},
\]
having used that $\eta - \delta + 1 < \eta + 1 < 1 + \nu$.

For the second term of \eqref{eqn:31}, we start from noting that $H^N_x \in C_T C^{\eta-1}$ by Lemma \ref{lem:HN}. An application of Lemma \ref{lem:holder_reg_pde} with $f = H^N_x$ and $\zeta = \eta - \delta$ yields
\[
t \mapsto \int_0^t P_{t - u} H^N_x(u) du \in C^{1/2}_{T} C^{\eta-\delta}
\]
with the norm
\[
\left\|\int_0^\cdot P_{\cdot - u} H^N_x(u) du\right\|_{C^{1/2}_TC^{\eta-\delta}} \le c_2 \| H^N_x\|_{C_TC^{\eta-1}} \le c_2 c_4 N^{\frac{\eta + \hat \beta}2},
\]
where the last inequality is by \eqref{eqn:26}. It remains to note that $[\cdot]_{1/2, L^\infty} \le \|\cdot\|_{C^{1/2}_T L^\infty} \le \|\cdot\|_{C^{1/2}_T C^{\eta-\delta}}$.
\end{proof}

\begin{corollary}
We have  $\rho^N \in C^{1/2}_T C^{\eta}$ for any $\eta\in(0, (1-\beta) \wedge \nu)$ with
\begin{equation}\label{eq:rhoN_seminorm}
    [\rho^N]_{1/2,C^\eta} \leq c_5 + c_6 N^{\frac{\eta + \hat\beta + \delta}{2}},
\end{equation}
where $\delta > 0$ is arbitrary but the constants $c_5, c_6$ from Lemma \ref{lem:rhoN_time_reg} depend on it.
\end{corollary}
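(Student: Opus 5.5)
The plan is to rerun the argument of Lemma \ref{lem:rhoN_time_reg}, shifting the spatial regularity index by $\delta$. The proof of that lemma actually controls $[\rho^N]_{1/2,C^{\eta'-\delta'}}$ for any admissible $\eta'$ and any $\delta'\in(0,\eta')$, and we will exploit this with a slightly larger index.

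\textbf{Choice of parameters.} Given $\eta\in(0,(1-\beta)\wedge\nu)$ and $\delta>0$, I set $\eta' := \eta+\delta$. Since both estimates only get weaker as $\delta$ grows, I may assume without loss of generality that $\delta$ is small enough that $\eta'<(1-\beta)\wedge\nu$. The mild formulation \eqref{eq:mild_soln_rfp} splits the increment $\rho^N(t)-\rho^N(s)$ into two pieces:
\begin{itemize}
\item the initial-condition part $P_t\rho_0-P_s\rho_0$;
\item the convolution part $\int_0^\cdot P_{\cdot-u}H^N_x(u)\,du$.
\end{itemize}

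\textbf{Step 1 (initial-condition part).} Lemma \ref{lem:schauder}(ii) with $\zeta=1/2$ gives
\[
\|P_t\rho_0-P_s\rho_0\|_{C^\eta}\le c_S^2|t-s|^{1/2}\|\rho_0\|_{C^{\eta+1}}\le c_S^2|t-s|^{1/2}\|\rho_0\|_{C^{1+\nu}},
\]
where the second inequality uses $\eta<\nu$. This produces the constant $c_5$.

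\textbf{Step 2 (convolution part).} Lemma \ref{lem:HN} applies with $\eta'$ in place of $\eta$, since $\eta'<1-\beta$. It gives $H^N_x\in C_TC^{\eta'-1}$ with
\[
\|H^N_x\|_{C_TC^{\eta'-1}}\le c_4N^{(\eta'+\hat\beta)/2}.
\]
Next I apply Lemma \ref{lem:holder_reg_pde} with $\xi=\eta'-1$ and $\zeta=\eta$. Its hypothesis $\zeta<\xi+1=\eta+\delta$ holds precisely because $\delta>0$. This yields a bound on the $C^{1/2}_TC^\eta$ norm, and hence on the seminorm, by
\[
c_2c_4N^{(\eta+\delta+\hat\beta)/2}.
\]
Adding Steps 1 and 2 gives \eqref{eq:rhoN_seminorm}, and membership $\rho^N\in C^{1/2}_TC^\eta$ follows together with $\rho^N\in C_TC^\eta$ from Proposition \ref{prop:mildSoln}.

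\textbf{Main obstacle.} The only delicate point is the strict inequality $\zeta<\xi+1$ in Lemma \ref{lem:holder_reg_pde}. It is what forces the loss of $\delta$ in the exponent of $N$, and it makes $c_2$, and therefore $c_6$, depend on $\delta$ (indeed $c_2$ blows up as $\delta\to0$). One must also track the $N$-dependence carefully: $c_4$ depends on $\eta'$ only through norms that are uniform in $N$, and the supremum over $N$ of $\|\rho^N\|_{C_TC^{\eta'\vee1/2}}$ stays finite because $\eta'<1-\beta$.
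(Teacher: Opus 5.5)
Your proposal is correct and is essentially the paper's proof. The paper likewise reruns the proof of Lemma~\ref{lem:rhoN_time_reg} with $\eta$ replaced by $\eta+\delta$, so that Lemma~\ref{lem:holder_reg_pde} is applied with $\xi=\eta+\delta-1$ and $\zeta=\eta$, and it handles large $\delta$ by the same monotonicity-in-$\delta$ reduction; your remarks that the strict inequality $\zeta<\xi+1$ is what costs the $\delta$ and that $c_2$ (hence $c_6$) blows up as $\delta\to0$ are accurate.
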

\begin{proof}
Assume first that $\delta$ is such that $\eta + \delta < (1-\beta) \wedge \nu$. The estimate of the H\"older seminorm $[\rho^N]_{1/2,C^\eta}$ follows directly from the estimates in the proof of Lemma \ref{lem:rhoN_time_reg} upon taking $\eta = \eta + \delta$. If $\delta$ in the statement does not satisfy the condition $\eta + \delta < (1-\beta) \wedge \nu$ we obtain the result for any $\delta'$ that satisfies it, and then notice that the right-hand side of \eqref{eq:rhoN_seminorm} is increasing in $\delta$, hence it holds for all $\delta \geq \delta'$. The finiteness of the H\"older seminorm and the fact that $[0,T] $ is compact  imply that $\rho^N \in C^{1/2}_T C^\eta$. 
\end{proof}

\begin{lemma}\label{lem:rhoN_der_reg_time}
We have $\rho^N_x \in L^\infty_T L^\infty$ and, for any $\eta \in (0, \nu)$,
\begin{equation*}
    \| \rho_x^N \|_{L_T^\infty L^\infty}
    \leq 
    c_7 + c_8 N^{\frac{\eta + \hat\beta}{2}},
\end{equation*}
where $c_7 = c_7 (\|\rho_0\|_{C_T C^{1 + \nu}})$ and $c_8 = c_3 c_4$ depends on $\eta$, $T, \|\rho_0\|_{C^{\frac12\vee\eta}}, \sup_N \|b^N\|_{C_T C^{-\beta}} $ and on $\|b\|_{C_T C^{-\hat\beta}}$.
\end{lemma}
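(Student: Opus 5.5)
The plan is to differentiate the mild formulation \eqref{eq:mild_soln_rfp} in space and bound each of the two resulting terms in $L^\infty$, uniformly in $t\in[0,T]$. Formally,
\[
\rho^N_x(t) = \partial_x P_t \rho_0 - \partial_x \int_0^t P_{t-s} H^N_x(s)\, ds ,
\]
where $H^N=\tilde F(\rho^N) b^N$ as in Lemma \ref{lem:HN}. Since $\partial_x$ commutes with the heat semigroup, the first term equals $P_t \partial_x\rho_0$. It suffices to bound both terms in $C^{\gamma}$ for some small $\gamma>0$, because $\|\cdot\|_{L^\infty}\le \|\cdot\|_{C^\gamma}$ for $\gamma\in(0,1)$. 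Choosing $\gamma>0$ also avoids the subtleties of the Besov space $C^0$.

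\textbf{Initial datum term.} By the Bernstein inequality (Lemma \ref{lem:bernstein}) and Schauder (Lemma \ref{lem:schauder}(i) with $\zeta=0$), for $\gamma\le\nu$ we get
\[
\|P_t\partial_x\rho_0\|_{C^\gamma}\le c_S^1\|\partial_x\rho_0\|_{C^{\gamma}}\le c_S^1 c_B\|\rho_0\|_{C^{1+\nu}} .
\]
This bound is uniform in $t$ and gives the constant $c_7$.

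\textbf{Integral term.} Write $g(t)=\int_0^t P_{t-s}H^N_x(s)\,ds$. By Lemma \ref{lem:HN}, $H^N_x\in C_T C^{\eta'-1}\subset L^\infty_TC^{\eta'-1}$ for any $\eta'\in(0,1-\beta)$, with norm at most $c_4N^{(\eta'+\hat\beta)/2}$. Lemma \ref{lem:holder_reg_der_pde} then gives $g\in L^\infty_T C^{\zeta}$ for every $\zeta<\eta'+1$, with $\|g\|_{L^\infty_TC^\zeta}\le c_3 c_4 N^{(\eta'+\hat\beta)/2}$. Take $\zeta=1+\gamma$ with $0<\gamma<\eta'$, and take $\eta'=\eta$ for the given $\eta\in(0,\nu)$. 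Bernstein then gives
\[
\|\partial_x g(t)\|_{C^\gamma}\le c_B\|g(t)\|_{C^{1+\gamma}}\le c_Bc_3c_4N^{(\eta+\hat\beta)/2},
\]
which is the announced $c_8N^{(\eta+\hat\beta)/2}$ term, with $c_8=c_3c_4$ up to the harmless factor $c_B$.

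\textbf{Main obstacle: range of $\eta$.} Lemma \ref{lem:HN} needs $\eta<1-\beta$, whereas the statement only assumes $\eta<\nu$. I would handle this by monotonicity. If $\eta\ge 1-\beta$, apply the argument with some $\eta''<1-\beta$ and use $N^{(\eta''+\hat\beta)/2}\le N^{(\eta+\hat\beta)/2}$ for $N\ge1$. So the bound holds for every $\eta\in(0,\nu)$, with $c_8$ depending on $\eta$ through this choice.

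\textbf{Rigour of differentiating under the integral.} $\rho^N$ is smooth enough for each fixed $N$, since $b^N$ is smooth. Alternatively, one can note that $g(t)\in C^{1+\gamma}$ means exactly that $\partial_x g(t)$ is a bounded continuous function. Then $\rho^N_x(t)$ is identified in $\mathcal S'$ with the sum of the two terms above, which lie in $L^\infty$ with the stated bound. This yields $\rho^N_x\in L^\infty_TL^\infty$.
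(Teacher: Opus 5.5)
Your proposal is correct and follows essentially the paper's route: the paper bounds $\rho^N$ in $L^\infty_T C^{1+\eta-\delta}$ (with $\delta=\eta/2$) using the mild formulation, Lemma \ref{lem:holder_reg_der_pde} and Lemma \ref{lem:HN}, then applies Bernstein once and embeds $C^{\eta-\delta}$ into $L^\infty$. This is your termwise argument with $\gamma=\eta-\delta$. Your monotonicity fix for $\eta\ge 1-\beta$ and the extra factor $c_B$ in $c_8$ are small points that the paper's proof passes over silently, and you handle both correctly.
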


\begin{proof}
Fix $\eta \in (0, \nu)$ and $\delta \in (0, \eta)$. Recall that by \eqref{eq:mild_soln_rfp}, $\rho^N(t) = P_t \rho_0 - \int_0^t P_{t - s} \partial_x H^N(t) ds$. We will show that $\rho^N \in L^\infty C^{\eta + 1 - \delta}$ by bounding its norm. We have
\begin{equation}\label{eqn:33}
\| \rho^N \|_{L_T^\infty C^{\eta + 1 - \delta}}
\leq\left\| P_\cdot \rho_0 \right\|_{L_T^\infty C^{\eta + 1 - \delta}} + \left\| \int_0^\cdot P_{\cdot-s} H_x^N(s) ds \right\|_{L_T^\infty C^{\eta + 1 - \delta}}.
\end{equation}
The continuity of the operator $P_t$ on $C^{\eta + 1- \delta}$ implies $\|P_t \rho_0\|_{C^{\eta + 1- \delta}} \leq C\|\rho_0\|_{C^{\eta + 1- \delta}}$ for some constant $C>0$, so 
\begin{equation}\label{eq:derbn_1st_term}
\|P_\cdot \rho_0\|_{L_T^\infty C^{\eta + 1- \delta}} \leq C\|\rho_0\|_{C^{\eta + 1- \delta}} \leq C \|\rho_0\|_{C^{1+\nu}},
\end{equation}
where the right-hand side is bounded as $\eta - \delta \le \nu$ and by Assumption \ref{ass:rho0}. For the second term of \eqref{eqn:33}, we apply Lemma \ref{lem:holder_reg_der_pde} with $\zeta = \eta + 1 - \delta$ to obtain
\[
\left\|\int_0^\cdot P_{\cdot - s} H_x^N(s)ds\right\|_{L_T^\infty C^{\eta+1-\delta}}
    \leq c_3 \|H_x^N \|_{L_T^\infty C^{\eta - 1}} \leq c_3 c_4 N^{\frac{\eta + \hat \beta}2},
\]
where we used that $H^N_x \in C_T C^{\eta - 1}$ by Lemma \ref{lem:HN} and the bound $\|H^N_x\|_{C_T C^{\eta - 1}} \le c_4 N^{\frac{\eta + \hat \beta}2}$, where the constant $c_4$ depends on $\eta$ and $\delta$. Inserting the bounds into \eqref{eqn:33} gives
\[
\| \rho^N \|_{L_T^\infty C^{\eta + 1 - \delta}}
\leq 
C \|\rho_0\|_{C^{1+\nu}} + c_3 c_4 N^{\frac{\eta + \hat \beta}2}.
\]
By the Bernstein inequality, Lemma \ref{lem:bernstein}, applied for each time $t$, we have
\[
\| \rho_x^N \|_{L_T^\infty C^{\eta - \delta}} \le c_B \| \rho^N \|_{L_T^\infty C^{\eta + 1 - \delta}}.
\]
As $\eta - \delta > 0$, the bound $\| \rho_x^N \|_{L_T^\infty L^\infty} \le \| \rho_x^N \|_{L_T^\infty C^{\eta - \delta}}$ holds. As we do not want the estimate in the lemma to depend on $\delta$, we need to specify $\delta$ as a function of $\eta$, for example, $\delta = \eta/2$. The constants  $\eta$ and $\delta$ influence the constant $c_3$.
\end{proof}

\section{Three key bounds for \texorpdfstring{$B^N$}{B\textasciicircum N}}\label{sec:three_bounds}

The main aim of this section is to derive regularity results for the smoothed drift $B^N$ in order to apply similar arguments from \cite{chaparro2023}. More precisely, we are aiming at obtaining analogoues of results in \cite[Proposition 4 and Corollary 5]{chaparro2023}. In particular, we need to show that $B^N \in C^{1/2}_T L^\infty \cap L^\infty_T C^1_b$, where, $B^N = F(\rho^N) b^N$. In this section, we fix $\epsilon \in (0, (1-\beta) \wedge \nu)$ and consider the bounds \eqref{eq:snorm_bn}--\eqref{eq:norm_dbn}, where the constant $c_b$ depends on $\epsilon$. We recall that $\epsilon$ is meant to be small but we cannot send it to $0$ as the constant $c_b$ in the aforementioned bounds may explode. We will also apply bounds from the previous section with $\eta = \epsilon$ and the constants there explode when $\eta \to 0$. 

\begin{lemma}\label{lem:BNhalf}
We have $B^N \in C^{1/2}_T L^\infty$ and
\begin{equation*}
\begin{aligned}
&\|B^N\|_{L^\infty_T L^\infty} \leq c_9N^{\frac{\epsilon + \hat\beta}{2}},\\
&[B^N]_{\frac12, L^\infty} \leq c_{10} N^{\frac{\epsilon + \hat\beta}{2}} + c_{11} N^{\epsilon + \hat\beta},
\end{aligned}
\end{equation*}
where
\begin{align*}
c_{9} &= \|F\|_{\infty} c_b \|b\|_{C_T C^{-\hat\beta}},\\
c_{10} &= c_b \|b\|_{C_T C^{-\hat\beta}} \big( \|F\|_\infty + \|F_x\|_{\infty} c_5\big),\\
c_{11} &=  c_b \|b\|_{C_T C^{-\hat\beta}} \|F_x\|_{\infty} c_6.
\end{align*}
\end{lemma}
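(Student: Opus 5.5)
The proof is a direct product estimate: split $B^N = F(\rho^N) b^N$ into its two factors, bound each factor separately, and insert the bounds \eqref{eq:snorm_bn}--\eqref{eq:norm_bn} for $b^N$ together with Lemma \ref{lem:rhoN_time_reg} for $\rho^N$, the latter applied with $\eta=\epsilon$.

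\emph{Sup-norm bound.} For each $t$ and $x$ we have $|B^N(t,x)| \le \|F\|_\infty |b^N(t,x)|$. Taking suprema and using \eqref{eq:norm_bn} gives
\[
\|B^N\|_{L^\infty_T L^\infty} \le \|F\|_\infty c_b N^{\frac{\epsilon+\hat\beta}{2}} \|b\|_{C_TC^{-\hat\beta}} = c_9 N^{\frac{\epsilon+\hat\beta}{2}}.
\]

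\emph{H\"older seminorm in time.} Fix $s\neq t$ and write
\[
B^N(t)-B^N(s) = F(\rho^N(t))\big(b^N(t)-b^N(s)\big) + \big(F(\rho^N(t))-F(\rho^N(s))\big) b^N(s).
\]
The first term has $L^\infty$ norm at most $\|F\|_\infty [b^N]_{\frac12,L^\infty}|t-s|^{1/2}$. For the second term we use that $F$ is Lipschitz with constant $\|F_x\|_\infty$ (Assumption \ref{ass:FF}). This gives the pointwise estimate
\[
|F(\rho^N(t,x))-F(\rho^N(s,x))| \le \|F_x\|_\infty \|\rho^N(t)-\rho^N(s)\|_{L^\infty},
\]
so the second term is bounded by $\|F_x\|_\infty [\rho^N]_{\frac12,L^\infty}|t-s|^{1/2}\|b^N\|_{C_TL^\infty}$. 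Dividing by $|t-s|^{1/2}$ and taking the supremum, we get
\[
[B^N]_{\frac12,L^\infty} \le \|F\|_\infty [b^N]_{\frac12,L^\infty} + \|F_x\|_\infty \|b^N\|_{C_TL^\infty} [\rho^N]_{\frac12,L^\infty}.
\]

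\emph{Inserting the bounds.} Use \eqref{eq:snorm_bn}, \eqref{eq:norm_bn} and Lemma \ref{lem:rhoN_time_reg} with $\eta=\epsilon$; this is admissible since $\epsilon\in(0,(1-\beta)\wedge\nu)$. The right-hand side becomes
\[
\|F\|_\infty c_b N^{\frac{\epsilon+\hat\beta}{2}}\|b\|_{C^{1/2}_TC^{-\hat\beta}} + \|F_x\|_\infty c_b N^{\frac{\epsilon+\hat\beta}{2}}\|b\|_{C_TC^{-\hat\beta}}\big(c_5 + c_6 N^{\frac{\epsilon+\hat\beta}{2}}\big).
\]
Collecting powers of $N$ gives exactly the $c_{10}$ and $c_{11}$ terms, with $N^{\frac{\epsilon+\hat\beta}{2}}\cdot N^{\frac{\epsilon+\hat\beta}{2}} = N^{\epsilon+\hat\beta}$. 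The membership $B^N\in C^{1/2}_TL^\infty$ then follows from the finite sup norm and the finite seminorm.

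\emph{Main obstacle: norm bookkeeping.} The bound \eqref{eq:snorm_bn} is stated with $\|b\|_{C^{1/2}_TC^{-\hat\beta}}$, whereas $c_{10}$ is written with $\|b\|_{C_TC^{-\hat\beta}}$. I would resolve this by recalling from \cite{chaparro2023} the exact form of \eqref{eq:snorm_bn}. If it genuinely needs the $C^{1/2}_T$ norm, I would simply replace $\|b\|_{C_TC^{-\hat\beta}}$ by the larger $\|b\|_{C^{1/2}_TC^{-\hat\beta}}$ in $c_{10}$; this is harmless for the lemma, since $b\in C^{1/2}_TC^{-\hat\beta}$ by Assumption \ref{ass:smallb}. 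Everything else is a routine product-rule estimate.
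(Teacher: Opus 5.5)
Your proposal is correct and follows the paper's proof essentially verbatim. It uses the same splitting $B^N(t)-B^N(s)=F(\rho^N(t))\big(b^N(t)-b^N(s)\big)+\big(F(\rho^N(t))-F(\rho^N(s))\big)b^N(s)$, the Lipschitz bound on $F$, and the insertion of \eqref{eq:snorm_bn}, \eqref{eq:norm_bn} and Lemma \ref{lem:rhoN_time_reg} with $\eta=\epsilon$. Your bookkeeping remark is also well founded: the paper's proof writes $\|b\|_{C_TC^{-\hat\beta}}$ where \eqref{eq:snorm_bn} actually delivers $\|b\|_{C^{1/2}_TC^{-\hat\beta}}$, so using the latter in the $\|F\|_\infty$ part of $c_{10}$ is the correct, harmless fix.
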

\begin{proof}
Since $F$ is bounded, we write
\begin{equation*}
\|B^N\|_{L^\infty_T L^\infty} = \|F(\rho^N)b^N\|_{L^\infty_T L^\infty}
\leq 
\|F\|_{\infty} \|b^N\|_{L^\infty_T L^\infty}
\leq
\|F\|_{\infty} c_b N^{\frac{\epsilon + \hat\beta}{2}}\|b\|_{C_T C^{-\hat\beta}},
\end{equation*}
where \eqref{eq:norm_bn} is used in the last inequality. This proves the first bound in the lemma.

To show that $B^N \in C^{1/2}_T L^\infty$, it is sufficient to bound the following seminorm
\[
[B^N]_{1/2, L^\infty} = \sup_{t \neq s}\frac{\|B^N(t, \cdot) - B^N(s, \cdot)\|_{L^\infty}}{|t - s|^{1/2}}.
\]
We write
\begin{equation}\label{eq:regul_BN}
\begin{aligned}
&\| B^N(t, \cdot) - B^N(s, \cdot) \|_{L^\infty}\\
&=
\| F(\rho^N(t, \cdot)) b^N(t, \cdot) -
F(\rho^N(t, \cdot)) b^N(s, \cdot) + F(\rho^N(t, \cdot)) b^N(s, \cdot) - F(\rho^N(s, \cdot)) b^N(s, \cdot) \|_{L^\infty}\\
& \leq
\| F(\rho^N(t, \cdot)) [b^N(t, \cdot) - b^N(s, \cdot)] \|_{L^\infty}
+
\| b^N(s, \cdot) [F(\rho^N(t, \cdot))- F(\rho^N(s, \cdot))] \|_{L^\infty}.
\end{aligned}
\end{equation}
For the first term, the boundedness of $F$ and \eqref{eq:snorm_bn} yield
\begin{align*}
\| F(\rho^N(t, \cdot)) [b^N(t, \cdot) - b^N(s, \cdot)] \|_{L^\infty}
&\le
\|F\|_\infty \|b^N(t, \cdot) - b^N(s, \cdot)\|_{L^\infty}\\
&\le
\|F\|_\infty [b^N]_{1/2, L^\infty} |t-s|^{1/2}\\
&\le
\|F\|_\infty c_b N^{\frac{\epsilon+\hat\beta}2} \|b\|_{C_TC^{-\hat\beta}} |t-s|^{1/2}.
\end{align*}

In order to bound the second term in \eqref{eq:regul_BN}, we recall that the derivative $F_x$ is bounded and $b^N$ is bounded, so
  \begin{equation*}
    \begin{aligned}
&      \| b^N(s, \cdot) [F(\rho^N(t, \cdot))- F(\rho^N(s, \cdot))] \|_{L^\infty}\\
      & \leq \| b^N \|_{C_T L^\infty} \| F(\rho^N(t, \cdot)) - F(\rho^N(s, \cdot)) \|_{L^\infty} \\
      &\leq \| b^N \|_{C_T L^\infty} \|F_x\|_\infty \| \rho^N(t, \cdot) - \rho^N(s, \cdot)\|_{L^\infty}.
    \end{aligned}
  \end{equation*}
Inserting the estimate \eqref{eq:norm_bn} for $\| b^N \|_{C_T L^\infty}$ and the bound from Lemma \ref{lem:rhoN_time_reg} with $\eta = \epsilon$ for $\| \rho^N(t, \cdot) - \rho^N(s, \cdot)\|_{L^\infty}$ gives
\[
\| b^N(s, \cdot) [F(\rho^N(t, \cdot))- F(\rho^N(s, \cdot))] \|_{L^\infty}
\le
\cb N^{\frac{\epsilon + \hat \beta}{2}}\| b \|_{C_T C^{-\hat\beta}} \|F_x\|_\infty
(c_5 + c_6 N^{\frac{\epsilon + \hat\beta}2}) |t-s|^{1/2}.
\]

Putting the above estimates into \eqref{eq:regul_BN} and dividing by $|t-s|^{1/2}$ provides a bound for $[B^N]_{1/2, L^\infty}$ and completes the proof.
\end{proof}

\begin{lemma}\label{lem:BNder}
We have  $B^N \in L^\infty_T C^1_b$ and
  \begin{equation} \label{eq:BNx}
      \|B^N_x\|_{L_T^\infty L^\infty}
      \leq
      c_{12} N^{\frac{\epsilon + \hat\beta}{2}} 
      + c_{13} N^{\epsilon + \hat\beta}
      + c_{14} N^{\frac{\epsilon + \hat\beta + 1}{2}},
  \end{equation}
  where
  \begin{equation*}
    c_{12} =  \|F_x\|_\infty c_7c_b \|b\|_{C_TC^{-\hat\beta}},
  \end{equation*}
  \begin{equation*}
    c_{13} = \|F_x\|_\infty c_8c_b  \|b\|_{C_TC^{-\hat\beta}},
  \end{equation*}
  \begin{equation*}
    c_{14} =  \|F\|_\infty c_b \|b\|_{C_TC^{-\hat\beta}}.
  \end{equation*}
\end{lemma}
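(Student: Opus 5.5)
The plan is to differentiate $B^N = F(\rho^N) b^N$ in space using the product and chain rules, and then bound each resulting term with the estimates already available. For every $t$, $\rho^N(t)$ is differentiable in $x$ with bounded derivative by Lemma \ref{lem:rhoN_der_reg_time}. Also, $b^N(t) = P_{1/N} b(t)$ is smooth with bounded derivative by \eqref{eq:norm_dbn}, and $F$ is $C^1$ with bounded derivative by Assumption \ref{ass:FF}. Hence, for each $t$,
\[
B^N_x(t,x) = F_x(\rho^N(t,x))\, \rho^N_x(t,x)\, b^N(t,x) + F(\rho^N(t,x))\, b^N_x(t,x).
\]
This identity gives $B^N(t) \in C^1_b$ once both terms are shown to be bounded uniformly in $t$, which yields $B^N \in L^\infty_T C^1_b$. (Boundedness of $B^N$ itself is already contained in Lemma \ref{lem:BNhalf}.)

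For the first term I would use
\[
\|F_x\|_\infty \, \|\rho^N_x\|_{L^\infty_T L^\infty} \, \|b^N\|_{C_T L^\infty}.
\]
Here I insert Lemma \ref{lem:rhoN_der_reg_time} with $\eta = \epsilon$, which is admissible since $\epsilon < \nu$, to get $\|\rho^N_x\|_{L^\infty_T L^\infty} \le c_7 + c_8 N^{\frac{\epsilon+\hat\beta}{2}}$. I also insert \eqref{eq:norm_bn}, namely $\|b^N\|_{C_T L^\infty} \le c_b N^{\frac{\epsilon+\hat\beta}{2}} \|b\|_{C_TC^{-\hat\beta}}$. Multiplying out gives $c_{12} N^{\frac{\epsilon+\hat\beta}{2}} + c_{13} N^{\epsilon+\hat\beta}$ with exactly the stated constants.

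For the second term I would use $\|F\|_\infty \|b^N_x\|_{C_TL^\infty}$ together with \eqref{eq:norm_dbn}. This gives $c_{14} N^{\frac{\epsilon+\hat\beta+1}{2}}$. Summing the two contributions yields \eqref{eq:BNx}.

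The only delicate point is justifying the pointwise chain rule. The issue is that Lemma \ref{lem:rhoN_der_reg_time} gives $\rho^N_x$ only as an element of $L^\infty_T L^\infty$, obtained through $\rho^N \in L^\infty_T C^{1+\epsilon-\delta}$. I would handle this as follows. For each fixed $t$, $\rho^N(t) \in C^{1+\epsilon/2}$, so $\rho^N(t)$ is classically $C^1$ in $x$. Since $F \in C^1$, the composition $F(\rho^N(t))$ is then $C^1$. The derivative bound holds for a.e. $t$, and that is exactly what the $L^\infty_T$ norm requires.
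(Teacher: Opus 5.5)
Your proposal is correct and follows the paper's proof essentially verbatim: you use the same product/chain-rule identity $B^N_x = F_x(\rho^N)\rho^N_x b^N + F(\rho^N)b^N_x$, bounded via Lemma \ref{lem:rhoN_der_reg_time} with $\eta=\epsilon$, \eqref{eq:norm_bn} and \eqref{eq:norm_dbn}, which yields exactly $c_{12}, c_{13}, c_{14}$. Your extra justification of the classical chain rule via $\rho^N(t)\in C^{1+\epsilon/2}$ (i.e.\ $\delta=\eta/2$ in that lemma's proof) usefully clarifies a point the paper passes over quickly.
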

\begin{proof}
For any $t\in[0,T]$, it is clear that $B^N(t)$ is differentiable in $x$ because so are $F$ by assumption, $\rho^N(t, \cdot)$ by Lemma \ref{lem:rhoN_der_reg_time} and $b^N(t, \cdot)$ by \eqref{eq:norm_dbn}. To prove \eqref{eq:BNx} we write
  \begin{equation}\label{eq:bound_BN_der}
    \begin{aligned}
      B^{N }_{x }(t, \cdot) 
     = \frac{d }{dx }[F(\rho^{N }(t, \cdot)) b^{N }(t, \cdot)] 
      & = F_x(\rho^{N }(t, \cdot))\rho^{N }_x(t, \cdot) b^{N }(t, \cdot) + F(\rho^{N }(t, \cdot)) b^{N }_{x }(t, \cdot),
    \end{aligned}	
  \end{equation}
 and bound its $L^\infty$-norm as follows
  \begin{equation}\label{eqn:36}
    \begin{aligned}
      \|B^N_x\|_{L_T^\infty L^\infty}
          &\leq \|F_x(\rho^{N })\rho^{N }_x b^{N }\|_{L_T^\infty L^\infty} + \|F(\rho^{N }) b^{N }_{x }\|_{L_T^\infty L^\infty}\\
      &\leq \|F_x(\rho^{N })\|_{L_T^\infty L^\infty}\|\rho^{N }_x\|_{L_T^\infty L^\infty} \|b^{N }\|_{L_T^\infty L^\infty} + \|F(\rho^{N })\|_{L_T^\infty L^\infty} \|b^{N }_{x }\|_{L_T^\infty L^\infty}.
    \end{aligned}	
  \end{equation}
Recalling that $F$ and $F_x$ are bounded by Assumption \ref{ass:FF} and using equations \eqref{eq:norm_bn}, \eqref{eq:norm_dbn} and  Lemma \ref{lem:rhoN_der_reg_time} with $\eta=\epsilon$
 we get
  \begin{align*}
    \|B^{N }_{x}\|_{L_T^\infty L^\infty}
    &\leq
    \|F_x\|_\infty (c_7+ c_8 N^{\frac{\epsilon+ \hat\beta}2}) c_b N^{\frac{\epsilon + \hat \beta}2} \|b\|_{C_TC^{-\hat\beta}}
 + \|F\|_\infty c_b N^{\frac{\epsilon + \hat \beta+1}2} \|b\|_{C_TC^{-\hat\beta}}\\
 &\leq
 \|F_x\|_\infty c_7c_b N^{\frac{\epsilon + \hat \beta}2} \|b\|_{C_TC^{-\hat\beta}} +  \|F_x\|_\infty c_8c_b N^{\epsilon + \hat \beta} \|b\|_{C_TC^{-\hat\beta}} \\
& \quad + \|F\|_\infty c_b N^{\frac{\epsilon + \hat \beta+1}2} \|b\|_{C_TC^{-\hat\beta}},
  \end{align*}
which concludes the proof.
\end{proof}

\section{Convergence rate with an exact solution to PDE}\label{sec:main_result}
This section culminates our analysis of the convergence error of the Euler scheme for \eqref{eq:MVSDEInit} under the assumption that $\rho^N$, which solves the regularised Fokker-Plank PDE \eqref{eq:FPEqnReg}, is known exactly. This is relaxed in Section \ref{sec:rate_approx_PDE}.

Recall \eqref{eq:TriangleIneq}, where the second term disappears as we assume here that the exact solution $\rho^N$ of the PDE \eqref{eq:FPEqnReg} is known. The approximation error is split into two terms: the Euler approximation error for the smoothed solution $X^N$ (Euler error) and the error arising from approximating $X$ by $X^N$ (stability estimate). The estimates that we obtained in the previous sections allow us to employ results from \cite{chaparro2023} to bound those two errors.

For what concerns the Euler error, by Lemma \ref{lem:BNhalf} and \ref{lem:BNder} we have $B^N \in C^{\frac12}_T L^\infty \cap L^\infty C^1_b$. Using \cite[Proposition~4]{chaparro2023} we bound the strong error of the Euler scheme for $X^N$ as follows:
\begin{equation}\label{eqn:nr}
\sup_t \mathbb E|X^{N, m} - X^{N}| \leq d_1(N) m^{-1} + d_2(N) m^{-\frac12},
\end{equation}
where
\begin{align*}
    d_{1}(N) &= c_9 N^{\frac{\epsilon + \hat\beta}{2}} \left( 1 + c_{12} N^{\frac{\epsilon + \hat\beta}{2}} 
      + c_{13} N^{\epsilon + \hat\beta} + c_{14} N^{\frac{\epsilon + \hat\beta + 1}{2}} \right),\\
    d_{2}(N) &= (c_{10} + c_{12}) N^{\frac{\epsilon + \hat\beta}{2}} 
      + (c_{11} + c_{13}) N^{\epsilon + \hat\beta}
      + c_{14} N^{\frac{\epsilon + \hat\beta + 1}{2}},
\end{align*}
and $\epsilon > 0$ is the parameter appearing in \eqref{eq:snorm_bn}--\eqref{eq:norm_dbn}.

Bounding $\sup_t\mathbb E | X^N_{t} - X_{t} |$ is not as quick. We need to adapt \cite[Proposition 6]{chaparro2023} to our setting (Proposition \ref{prop:p6}) and estimate the norm of $B^N - B$ in an appropriate space (Proposition \ref{prop:rr}).

\begin{proposition}\label{prop:p6}
For any $\alpha \in (1/2, 1-\hat\beta)$ and any $\gamma \in (\hat\beta, 1/2)$, there is a constant $c_{15}>0$ that depends on $T$ and $\gamma$  but not on $\alpha$, such that
\begin{equation}\label{eqn:63}
\sup_t\mathbb E | X^N_{t} - X_{t} | \leq c_{15} \|B^N - B\|_{C_T C^{-\gamma}}^{2\alpha - 1}
\end{equation}
for all $N$ sufficient large so that $\|B^N - B\|_{C_TC^{-\gamma}} < 1$. Furthermore, $\E |X^N_t|^2 + \E |X_t|^2 < \infty$ for $t \in [0,T]$. 
\end{proposition}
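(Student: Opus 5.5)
We follow the strategy of \cite[Proposition 6]{chaparro2023} and use the Zvonkin-type transformation from the proof of Theorem~\ref{thm:strong}. Fix $\lambda>0$ large. Let $u$ be the mild solution of \eqref{eq:kolmogorov} with drift $B=F(\rho)b$, and let $u^N$ be the analogous solution with drift $B^N=F(\rho^N)b^N$. Both drifts belong to $C_TC^{-\gamma}$ for any $\gamma\in(\hat\beta,1/2)$, and their norms are bounded uniformly in $N$. For $B$ this follows from the paraproduct bound and Proposition~\ref{prop:mildSoln}. For $B^N$ it follows from $\sup_N\|b^N\|_{C_TC^{-\hat\beta}}<\infty$ together with \eqref{eq:rhoNminrho}.

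By \cite[Theorem 4.7, Proposition 4.13]{issoglioPdeDriftNegative2022}, $u$ and $u^N$ are bounded in $C_TC^{1+\alpha}$ uniformly in $N$ for $\alpha<1-\gamma$, and $|u_x|,|u^N_x|\le 1/2$ once $\lambda$ is large enough (uniformly in $N$). The same theory gives a stability bound
\[
\|u-u^N\|_{C_TC^{1+\alpha'}}\le c\,\|B-B^N\|_{C_TC^{-\gamma}}
\]
for suitable $\alpha'$. With $\phi^N(t,x)=x+u^N(t,x)$ and $\psi^N=(\phi^N)^{-1}$, the process $Y^N_t=\phi^N(t,X^N_t)$ satisfies \eqref{eq:Y_def} with $u^N$ in place of $u$. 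Here Itô's formula applies classically because $B^N$ is smooth. Likewise $Y_t=\phi(t,X_t)$ satisfies \eqref{eq:Y_def}. Both equations are driven by the same $W$ and start from $\phi(0,X_0)$ and $\phi^N(0,X_0)$ respectively.

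Next compare $Y$ and $Y^N$. Since $\psi,\psi^N$ are $2$-Lipschitz,
\[
|X_t-X^N_t|\le 2|Y_t-Y^N_t|+\|\psi(t,\cdot)-\psi^N(t,\cdot)\|_\infty .
\]
The last term is controlled by $\|u-u^N\|_\infty$. For $Y-Y^N$, the drifts differ by a Lipschitz term plus $O(\|u-u^N\|_\infty)$. The diffusion coefficients $u_x\circ\psi+1$ and $u^N_x\circ\psi^N+1$ are $\alpha$-Hölder with $\alpha>1/2$ and differ by at most $c(\|u_x-u^N_x\|_\infty+\|u-u^N\|_\infty)$. We then run the Yamada–Watanabe / Gyöngy–Rásonyi argument. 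Take an approximation $\varphi_{\delta,\varepsilon}$ of $|\cdot|$ with $\varphi''$ supported where $|y|\in[\varepsilon/\delta,\varepsilon]$ and $\varphi''(y)\le 2/(|y|\log\delta)$. Apply Itô's formula to $\varphi(Y_t-Y^N_t)$. The quadratic variation term is bounded by
\[
\frac{c}{\log\delta}\big(\varepsilon^{2\alpha-1}+\varepsilon^{-1}\|B-B^N\|^2_{C_TC^{-\gamma}}\big),
\]
the drift term by Gronwall, and the initial term by $\|u(0)-u^N(0)\|_\infty$. Optimising over $\varepsilon$ (taking $\varepsilon\approx \|B-B^N\|$) and fixing $\delta$ gives
\[
\sup_t\E|Y_t-Y^N_t|\le c\,\|B-B^N\|^{2\alpha-1}_{C_TC^{-\gamma}} .
\]
The condition $\|B^N-B\|<1$ ensures that the dominant power is $2\alpha-1\le1$.

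The main obstacle is the uniform-in-$N$ control. We need the constants (the bound $\|u_x\|\le1/2$, the $C^{1+\alpha}$ norms, and the stability constant for $u-u^N$) to depend only on $\sup_N\|B^N\|_{C_TC^{-\gamma}}$. The dependence on $\alpha$ must also be absorbed: the Hölder constant of $u_x$ is uniform for $\alpha$ in a compact range, so only $\gamma$ and $T$ enter $c_{15}$. Square integrability of $X$ is Theorem~\ref{thm:strong}. For $X^N$ it is standard, since $B^N$ is bounded and $\E X_0^2<\infty$ by Assumption~\ref{ass:rho0}.
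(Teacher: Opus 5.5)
Your proposal follows essentially the paper's argument. The paper also compares $Y=\phi(X)$ and $Y^N=\phi^N(X^N)$ driven by the same $W$. Its only new input relative to \cite[Proposition~6]{chaparro2023} is exactly your treatment of the random initial value, $\E|Y^N_0-Y_0|\le\|u^N(0)-u(0)\|_\infty$, and it defers everything else to \cite[Section~4]{chaparro2023]}, where $\E|Y^N_t-Y_t|$ is expanded by Tanaka's formula and the local time $L^0_t(Y^N-Y)$ is estimated. Your Yamada--Watanabe function $\varphi_{\delta,\varepsilon}$ replaces that Tanaka/local-time step and produces the same $\varepsilon^{2\alpha-1}$ contribution from the $\alpha$-H\"older diffusion coefficient. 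Your proof of $\E|X^N_t|^2<\infty$ directly from the boundedness of $B^N$ is simpler than the paper's detour through $Y^N$.

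Two steps need fixing; neither changes the conclusion.

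First, you obtain $u,u^N\in C_TC^{1+\alpha}$ only for $\alpha<1-\gamma$. The statement allows any $\alpha<1-\hat\beta$ independently of $\gamma$, and the case used later ($\gamma=\tfrac12-\lambda$, $\alpha=1-\beta-\lambda$ with $\lambda<\tfrac12(\tfrac12-\beta)$) has $\alpha>1-\gamma$. As written, your argument therefore does not cover the case the paper needs. The H\"older regularity of the diffusion coefficients should instead come from the uniform bound of $B$ and $B^N$ in $C_TC^{-\hat\beta}$. This follows from the paraproduct estimate, with $F(\rho^N)$ bounded in $C_TC^{1/2}$ uniformly in $N$ and $\|b^N\|_{C_TC^{-\hat\beta}}\le c\|b\|_{C_TC^{-\hat\beta}}$. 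It gives $u,u^N$ bounded in $C_TC^{1+\alpha}$ for every $\alpha<1-\hat\beta$, so $\gamma$ enters only through the stability bound for $u-u^N$ in $C^1$.

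Second, $u_x$ is only $\alpha$-H\"older, so the diffusion coefficients satisfy
\[
\|u_x\circ\psi-u^N_x\circ\psi^N\|_\infty\le\|u_x-u^N_x\|_\infty+c\,\|u-u^N\|_\infty^{\alpha},
\]
not a bound linear in $\|u-u^N\|_\infty$. Also, $|y|^{-1}\le\delta/\varepsilon$ on the support of $\varphi''$. The quadratic-variation term is therefore of order $\varepsilon^{2\alpha-1}+\delta\varepsilon^{-1}\|B-B^N\|^{2\alpha}_{C_TC^{-\gamma}}$. The choice $\varepsilon=\|B-B^N\|_{C_TC^{-\gamma}}$ still yields the exponent $2\alpha-1$.

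Finally, your compactness remark gives $\alpha$-independent constants only for $\alpha\in(1/2,\bar\alpha]$ with $\bar\alpha<1-\hat\beta$. That is all the later application $\alpha=1-\beta-\lambda$ requires.
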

\begin{proof}
The difference between our setting and that of \cite[Proposition 6]{chaparro2023} is in the initial condition. It is deterministic there and random here. The proof of Proposition 6 in \cite{chaparro2023} takes up its Section 4. The initial condition shows up in several places which we will address now. We will use the notation from \cite{chaparro2023} without recalling it, as the reader needs to consult this paper to understand the arguments below.

We define the process $Y^N_t = \psi^N(t, X^N_t)$ corresponding to the solution $X^N$ in an analogous way as $Y$ corresponds to $X$ via the mapping $\psi$ in the proof of Theorem \ref{thm:strong}. Using the same arguments, we obtain that $\E (Y^N_t)^2 < \infty$ for all $t \in [0, T]$. The integrability is necessary for the arguments in \cite[Section~4]{chaparro2023}. The only place where the initial condition enters meaningfully is the proof of \cite[Prop.~6]{chaparro2023}. There one computes
\begin{equation*}
	\begin{aligned}
		\mathbbm{E}\left[|Y^N_t - Y_t|\right]
		&= \mathbbm{E}|Y^N_0 - Y_0| + \mathbbm{E}\left[\frac{1}{2} L^0_t (Y^N - Y)\right]
		\\
		&\hspace{11pt}+ \lambda \mathbbm{E}\left[\int_0^t \sgn (Y^N - Y)(u^N(s, \psi^N(s, Y^N_s)) - u(s, \psi(s, Y_s))) ds\right].
	\end{aligned}
\end{equation*}
Only the first term needs to be treated differently than with a deterministic initial condition. We have
\begin{align*}
Y^N_0 - Y_0 &= \psi^N(0, X_0) - \psi(0, X_0) = (X_0 + u^N(0, X_0)) - (X_0 + u(0, X_0))\\
&\le \|u^N(0) - u(0)\|_{\infty} \le \|u^N - u\|_{C_TC^{1+\alpha}} \le o \Big(\|B^N - B\|_{C^TC^{-\gamma}}^{2\alpha-1}\Big),
\end{align*}
where the last inequality is justified in the same way as in \cite{chaparro2023} referring to their Lemma 11 and the assumption $\|B^N - B\|_{C_TC^{-\gamma}} < 1$. The remaining arguments of \cite[Section~4]{chaparro2023} required for the proof of our proposition  remain valid.
\end{proof}

\begin{proposition}[Stability estimate]\label{prop:rr}
For any $\alpha \in (1/2, 1-\hat\beta)$ and any $\gamma \in (\hat\beta, 1/2)$, we have
  \begin{equation}
    \sup_t\mathbb E | X^N_{t} - X_{t} | \leq c_{15} \left(c_{16} N^{-\frac{\gamma - \beta}{2}} \right)^{2 \alpha - 1},  
  \end{equation}
for $N$ sufficient large so that $\|B^N - B\|_{C_TC^{-\gamma}} < 1$, where the constant $c_{15}$ does not depend on $\alpha$ and the constant $c_{16}$ is explicitly given by
\begin{align*}
c_{16} = c_\varphi\big(1 + \sup_N \|\rho^N\|_{C_T C^{1/2}} + c_l \|b\|_{C_T C^{-\gamma}} (1 + \sup_N \|\rho^N\|_{C_T C^{1/2}}^2 + \|\rho\|_{C_T C^{1/2}}^2)^{1/2}\big) c^2_S \|b\|_{C_T C^{-\gamma}},
\end{align*}
 where the constant $c_l$ depends only on $\hat\gamma \in (\gamma, 1) \cap (\beta, 1-\beta)$ fixed in the proof.
\end{proposition}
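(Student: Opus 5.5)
By Proposition \ref{prop:p6} it suffices to show that
\[
\|B^N - B\|_{C_TC^{-\gamma}} \le c_{16} N^{-\frac{\gamma-\beta}{2}}
\]
for $N$ large. The estimate then follows by raising this bound to the power $2\alpha-1>0$ and inserting it into \eqref{eqn:63}.

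I would split the drift difference as
\[
B^N - B = \big(F(\rho^N) - F(\rho)\big) b^N + F(\rho)\,(b^N - b)
\]
and bound each term in $C^{-\gamma}$ uniformly in $t$. The tool for the products is the paraproduct estimate (the one behind \cite[Eq.~(2.9)]{issoglioMcKeanSDEsSingular2023} used in the proof of Theorem \ref{thm:strong}). For a function $f\in C^{\hat\gamma}$ and a distribution $g\in C^{-\gamma}$ with $\hat\gamma>\gamma$, it gives
\[
\|fg\|_{C^{-\gamma}}\le c_l\|f\|_{C^{\hat\gamma}}\|g\|_{C^{-\gamma}}.
\]
I fix $\hat\gamma\in(\gamma,1)\cap(\beta,1-\beta)$. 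This range is admissible for Proposition \ref{prop:mildSoln} and for \eqref{eq:rhoNminrho}, and it can be taken $\le 1/2$, so $C^{1/2}$ norms dominate $C^{\hat\gamma}$ norms.

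For the second term, Lemma \ref{lem:prop_op_F} applied to $F$ gives $\|F(\rho(t))\|_{C^{1/2}}\le c_\varphi(1+\|\rho\|_{C_TC^{1/2}})$. Lemma \ref{lem:schauder}(ii) with $\xi=-\gamma$ and $\zeta=(\gamma-\beta)/2$ gives
\[
\|b^N(t)-b(t)\|_{C^{-\gamma}}\le c_S^2 N^{-\frac{\gamma-\beta}{2}}\|b(t)\|_{C^{-\beta}}.
\]
Since $\hat\beta<\beta$, we have $\|b\|_{C^{-\beta}}\lesssim\|b\|_{C^{-\gamma}}$ up to the ordering of indices. The most convenient route is to use $\|b\|_{C^{-\beta}}$ directly and absorb it into $\|b\|_{C_TC^{-\gamma}}$ when matching the stated constant. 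This yields the $c_\varphi(1+\sup_N\|\rho^N\|)\,c_S^2\|b\|$ part of $c_{16}$.

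For the first term, the paraproduct bound together with $\|b^N\|_{C^{-\gamma}}\le\|b\|_{C^{-\gamma}}$ (the semigroup is a contraction) and Lemma \ref{lem:prop_op_F} gives
\[
\|(F(\rho^N)-F(\rho))b^N\|_{C^{-\gamma}}\le c_l\,c_\varphi\big(1+\|\rho^N\|_{C^{1/2}}^2+\|\rho\|_{C^{1/2}}^2\big)^{1/2}\|\rho^N-\rho\|_{C_TC^{\hat\gamma}}\|b\|_{C_TC^{-\gamma}}.
\]
The main obstacle is therefore a \emph{quantitative} rate $\|\rho^N-\rho\|_{C_TC^{\hat\gamma}}\lesssim N^{-\frac{\gamma-\beta}{2}}$, since \eqref{eq:rhoNminrho} only gives convergence. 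I would obtain it from the mild formulations \eqref{eq:mild_soln_fp} and \eqref{eq:mild_soln_rfp}. Subtracting them, the difference $\rho^N(t)-\rho(t)$ equals
\[
-\int_0^tP_{t-s}\partial_x\big[(\tilde F(\rho^N)-\tilde F(\rho))b^N+\tilde F(\rho)(b^N-b)\big]ds.
\]
I would estimate this with Lemma \ref{lem:schauder}(i) and Lemma \ref{lem:bernstein}, which cost a factor $(t-s)^{-(1+\hat\gamma+\gamma)/2}$; this is integrable because $\hat\gamma+\gamma<1$. The paraproduct and Lemma \ref{lem:prop_op_F} for $\tilde F$ handle the integrand. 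The $b^N-b$ piece contributes $c\,N^{-\frac{\gamma-\beta}{2}}$ by Schauder (ii), and the other piece is linear in $\|\rho^N-\rho\|_{C^{\hat\gamma}}$. A singular Gronwall argument, or working in a weighted-in-time norm $\sup_te^{-\mu t}\|\cdot\|$ with $\mu$ large, closes the estimate. The constants stay uniform in $N$ thanks to $\sup_N\|\rho^N\|_{C_TC^{1/2}}<\infty$ from Lemma \ref{lem:HN}'s proof. Collecting the three contributions gives the displayed $c_{16}$ up to the identification of constants, and the condition $\|B^N-B\|<1$ is inherited from Proposition \ref{prop:p6}.
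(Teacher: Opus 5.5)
Your proposal is correct. Its skeleton is the paper's: reduce to Proposition~\ref{prop:p6}, split $B^N-B$ into two products, and use the paraproduct bound with $\hat\gamma>\gamma$, Lemma~\ref{lem:prop_op_F} and Lemma~\ref{lem:schauder}(ii). Your split is just the mirror image of the paper's $F(\rho^N)(b^N-b)+(F(\rho^N)-F(\rho))b$.

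The real difference is how you get the rate for $\rho^N-\rho$. The paper does not derive it. It quotes the Lipschitz dependence of the mild Fokker--Planck solution on the drift, \cite[Proposition 4.4]{issoglioMcKeanSDEsSingular2023}, in the form $\|\rho^N-\rho\|_{C_TC^{\hat\gamma}}\le c_l\|b^N-b\|$, and this is what makes $c_{16}$ explicit. You instead reprove such an estimate by subtracting the two mild formulations and closing with a singular Gronwall (or weighted-norm) argument. Your kernel exponent $(1+\hat\gamma+\gamma)/2<1$, the uniform bound $\sup_N\|\rho^N\|_{C_TC^{1/2}}<\infty$, and the choice $\hat\gamma=1/2$ are all fine.

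This route costs you the explicit constant: a Gronwall/Mittag-Leffler factor replaces $c_l$. In return it is self-contained, and it measures the perturbation $b^N-b$ in $C^{-\gamma}$. That is exactly where Lemma~\ref{lem:schauder}(ii) with $\xi=-\gamma$, $\zeta=(\gamma-\beta)/2$ yields $c_S^2N^{-\frac{\gamma-\beta}{2}}\|b\|_{C_TC^{-\beta}}$. By contrast, the paper's chain as written passes through $\|b^N-b\|_{C_TC^{-\beta}}\le c_S^2N^{-\frac{\gamma-\beta}{2}}\|b\|_{C_TC^{-\gamma}}$, whose indices are reversed when $\gamma>\beta$. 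The fix is to invoke \cite[Proposition 4.4]{issoglioMcKeanSDEsSingular2023} at level $\gamma$ rather than $\beta$, which is legitimate since $\hat\gamma\in(\gamma,1-\gamma)$. Your Gronwall step does precisely this by hand.

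One step you should drop is the claim $\|b\|_{C^{-\beta}}\lesssim\|b\|_{C^{-\gamma}}$. In the only relevant regime $\gamma>\beta$ (otherwise $N^{-\frac{\gamma-\beta}{2}}$ does not decay), the embedding goes the other way: $\|b\|_{C^{-\gamma}}\le\|b\|_{C^{-\beta}}$. The factor next to $c_S^2$ must therefore be $\|b\|_{C_TC^{-\beta}}$, which is finite because $b\in C_TC^{-\hat\beta}$ and $\hat\beta<\beta$. You cannot, and need not, force it into the displayed $\|b\|_{C_TC^{-\gamma}}$.
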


\begin{proof}
Let  $\hat\gamma \in (\gamma, 1)  \cap (\beta, 1-\beta)$. We bound the right-hand side of \eqref{eqn:63} as follows:
  \begin{equation*}
    \begin{aligned}
      \|B^N - B\|_{C_T C^{-\gamma}} 
    &= \|F(\rho^N) b^N - F(\rho) b\|_{C_T C^{-\gamma}}\\
    &= \|F(\rho^N) b^N - F(\rho^N) b + F(\rho^N) b - F(\rho) b\|_{C_T C^{-\gamma}}\\
    &\leq \|F(\rho^N) b^N - F(\rho^N) b\|_{C_T C^{-\gamma}} + \|F(\rho^N) b - F(\rho) b\|_{C_T C^{-\gamma}}\\
    &\leq \|F(\rho^N)\|_{C_T C^{\hat\gamma}} \|b^N - b\|_{C_T C^{-\gamma}} + \|b\|_{C_T C^{-\gamma}}\|F(\rho^N) - F(\rho)\|_{C_T C^{\hat\gamma}},
    \end{aligned}
  \end{equation*}
where in the last inequality we use the property of the point-wise product between a function and a distribution defined using Bony's paraproduct \cite{bony}, see also \cite[Eq.~\lbracket2.9)]{issoglioMcKeanSDEsSingular2023} for the specific application of Bony's paraproduct to the current setting. Here it is crucial that $\hat\gamma \in( \gamma, \infty)$.

The term $\|F(\rho^N)\|_{C_T C^{\hat\gamma}}$ is bounded using Lemma \ref{lem:prop_op_F}
\begin{equation*}
\|F(\rho^N)\|_{C_T C^{\hat\gamma}} 
\leq c_\varphi(1 + \|\rho^{N}\|_{C_T C^{\hat\gamma}})
\leq c_\varphi(1 + \sup_N \|\rho^{N}\|_{C_T C^{\hat\gamma}}),
\end{equation*}
where $\sup_N \|\rho^{N}\|_{C_T C^{\hat\gamma}} < \infty$ by the convergence of $\rho^N \to \rho$ in $C_T C^{\hat\gamma}$ given in \eqref{eq:rhoNminrho} thanks to the choice $\hat\gamma\in(\beta,1-\beta)$. For bounding the term $\|F(\rho^N) - F(\rho)\|_{C_T C^{\hat\gamma}}$ we  use Lemma~\ref{lem:prop_op_F} again
  \begin{equation*}
    \begin{aligned}
      \|F(\rho^N) - F(\rho)\|_{C_T C^{\hat\gamma}} 
    &\leq 
    c_\varphi \big(1 + \|\rho^N\|_{C_T C^{\hat\gamma}}^2 + \|\rho\|_{C_T C^{\hat\gamma}}^2\big)^{1/2} 
    \|\rho^N - \rho\|_{C_T C^{\hat\gamma}}\\
    &\leq c_\varphi \big(1 + \sup_N \|\rho^N\|_{C_T C^{\hat\gamma}}^2 + \|\rho\|_{C_T C^{\hat\gamma}}^2\big)^{1/2} \|\rho^N - \rho\|_{C_T C^{\hat\gamma}}.
    \end{aligned}
  \end{equation*}
By \cite[Proposition 4.4]{issoglioMcKeanSDEsSingular2023}, we obtain $\|\rho^N - \rho\|_{C_T C^{\hat \gamma}} \le  c_l \|b^N - b\|_{C_T C^{-\beta}}$,
where the constant $c_l$ depends on $\hat\gamma$, $\|\rho_0\|_{C^{\hat\gamma}} \le \|\rho_0\|_{C^{1+\nu}}$ and on $\sup_{N} \|b^N\|_{C_TC^{-\beta}}$, the latter being finite as $b^N \to b$ in $C_TC^{-\beta}$. 

Combining the above estimates gives
\begin{align*}
&\|B^N - B\|_{C_T C^{-\beta}}\\
&\leq
c_\varphi\big(1 + \sup_N \|\rho^N\|_{C_T C^{\hat\gamma}} + c_l \|b\|_{C_T C^{-\gamma}} (1 + \sup_N \|\rho^N\|_{C_T C^{\hat\gamma}}^2 + \|\rho\|_{C_T C^{\hat\gamma}}^2)^{1/2}\big) \|b^N - b\|_{C_T C^{-\beta}}.
\end{align*}
Finally, by the Schauder estimates (Lemma \ref{lem:schauder}(ii)) we have
\[
\|b^N - b\|_{C_T C^{-\beta}}
\le
c^2_S \left(\frac{1}{N}\right)^{\frac{\gamma - \beta}{2}} \|b\|_{C_T C^{-\gamma}},
\]
and putting everything together we conclude the proof. As the choice of $\hat \gamma$ does not affect the rate but only the constants, we choose $\hat\gamma=1/2$.
\end{proof}

The final result bounding the strong error of our numerical scheme is obtained by combining the bound \eqref{eqn:nr} and Proposition \ref{prop:rr}.  

\begin{proposition}\label{prop:diff_num_rsol}
  Assume that the conditions for Proposition \ref{prop:rr} hold, then the numerical error is bounded by
  \begin{equation}\label{eq:numerical}
   \sup_{t \in [0, T]} \mathbb E |X^{N,m}_{t} - X_{t}|
    \leq
    d_1(N) m^{-1} + d_2(N) m^{-\frac12}
    +
    c_{15} \left(c_{16} N^{-\frac{\gamma-\beta}{2}}\right)^{2\alpha - 1},
  \end{equation}
  where $d_1(N) , d_2(N) $ come from \eqref{eqn:nr} and  the constants $c_{15}$ and $c_{16}$ come from Proposition \ref{prop:rr} and are independent of $\alpha \in (1/2, 1-\hat\beta)$ but depend on $\gamma \in (\hat\beta, 1/2)$.
\end{proposition}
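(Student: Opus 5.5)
The bound is obtained by combining two estimates already in hand through the triangle inequality \eqref{eq:TriangleIneq}. Since $\rho^N$ is assumed exact in this section, the middle term disappears ($\hat X^N = X^N$). For every $t\in[0,T]$ we write
\[
\mathbb E|X^{N,m}_t - X_t| \le \mathbb E|X^{N,m}_t - X^N_t| + \mathbb E|X^N_t - X_t|,
\]
take suprema over $t$ on each term separately, and use $\sup(a+b)\le \sup a + \sup b$. All three processes live on the same filtered probability space: the Brownian motion $W$ and initial condition $X_0$ are shared, with $X$ given by Theorem~\ref{thm:strong}. The right-hand sides are finite because $\E|X^N_t|^2+\E|X_t|^2<\infty$ by Proposition~\ref{prop:p6}, and $X^{N,m}$ is integrable by its explicit recursion with bounded drift $B^N$ (Lemma~\ref{lem:BNhalf}). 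Hence the strong error is well defined.

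For the first (Euler) term, we invoke \eqref{eqn:nr}. Its applicability rests on Lemmas~\ref{lem:BNhalf} and~\ref{lem:BNder}, which give $B^N\in C^{1/2}_TL^\infty\cap L^\infty_TC^1_b$, together with \cite[Proposition~4]{chaparro2023}. This yields $d_1(N)m^{-1}+d_2(N)m^{-1/2}$, where $d_1,d_2$ are assembled exactly from $c_9,\dots,c_{14}$.

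For the second (stability) term, we fix $\alpha\in(1/2,1-\hat\beta)$ and $\gamma\in(\hat\beta,1/2)$. Taking $N$ large enough that $\|B^N-B\|_{C_TC^{-\gamma}}<1$, we apply Proposition~\ref{prop:rr} to get $c_{15}(c_{16}N^{-(\gamma-\beta)/2})^{2\alpha-1}$. Adding the two bounds gives \eqref{eq:numerical}.

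The only point needing care is the dependence of the constants. Proposition~\ref{prop:p6} gives $c_{15}$ independent of $\alpha$, and the explicit form of $c_{16}$ (with $\hat\gamma=1/2$ fixed) shows it does not involve $\alpha$ either; both depend on $\gamma$. The constants in $d_1,d_2$ depend only on $\epsilon$ and the data, not on $\alpha$ or $\gamma$. The largeness condition on $N$ is inherited verbatim from Proposition~\ref{prop:rr}. There is no genuine obstacle beyond this consistent bookkeeping.
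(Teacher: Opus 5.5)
Your proposal is correct and follows the paper's own argument. The paper obtains \eqref{eq:numerical} by inserting the Euler bound \eqref{eqn:nr} and the stability estimate of Proposition~\ref{prop:rr} into the triangle inequality \eqref{eq:TriangleIneq}, whose middle term is absent because $\rho^N$ is exact; your extra remarks on integrability and on how the constants depend on $\alpha$ and $\gamma$ agree with what Propositions~\ref{prop:p6} and~\ref{prop:rr} record.
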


The above technical proposition expresses the bound for the approximation error in terms of smoothing parameter $N$, the time step $\frac1m$ and a number of auxiliary parameters $\epsilon$, $\alpha$ and $\gamma$. Our aim is to set those parameters and determine how $N$ should depend on $m$ in such a way to obtain an error bound in terms of $\beta$ and $m$. First, we set
\[
\epsilon:= \beta-\hat\beta,
\]
which appears in the estimates in the previous sections starting from \eqref{eq:snorm_bn}--\eqref{eq:norm_dbn} and in $d_1(N), d_2(N)$ in the first two terms of \eqref{eq:numerical}. We note that $\epsilon$ must be smaller than $\nu \wedge (1-\beta)$, see the beginning of Section \ref{sec:three_bounds}, which is guaranteed by Assumption \ref{ass:smallb}, see the discussion in Remark \ref{rem:beta}.

Inserting the above $\epsilon$ into the formulas for $d_1(N)$ and $d_2(N)$, they simplify to
\begin{align*}
    d_{1}(N) &= c_9 N^{\frac{\beta}{2}} \left( 1 + c_{12} N^{\frac{\beta}{2}} 
      + c_{13} N^{\beta} + c_{14} N^{\frac{\beta + 1}{2}} \right),\\ 
    d_{2}(N) &= (c_{10} + c_{12}) N^{\frac{\beta}{2}} 
      + (c_{11} + c_{13}) N^{\beta}
      + c_{14} N^{\frac{\beta + 1}{2}}.
\end{align*}
The fastest growing terms in  $d_1(N)$ and $d_2(N)$ are, respectively, $c_9 c_{14} N^{\beta + \frac12}$ and $c_{14} N^{\frac \beta 2 + \frac12}$. 

We now proceed to determine the relationship between $N$ and $m$ to balance the error of Euler-Maruyama scheme for $X^N$ and the error of approximating $X$ by $X^N$. We postulate that $N=m^{\kappa}$ for some $\kappa>0$ and choose it so as to maximise the convergence rate. The asymptotic behaviour of the first two terms in the bound \eqref{eq:numerical} is guided by the fastest growing terms:
\begin{align}\label{eq:d1d2}
\nonumber
& c_9 c_{14} N^{\beta + \frac12} m^{-1} + c_{14} N^{\frac{ \beta + 1}2} m^{-\frac12} \\
\nonumber
&= c_9 c_{14} m^{\kappa (\beta + \frac12)} m^{-1} + c_{14} m^{\kappa \frac{ \beta + 1}2} m^{-\frac12} \\
&=c_9 c_{14} m^{\kappa (\beta + \frac12) -1} + c_{14} m^{\kappa \frac{ \beta + 1}2 -\frac12}.
\end{align}
For the third term of \eqref{eq:numerical}, we need to choose $\alpha$ and $\gamma$. Written in terms of $m$ and discarding constants, this term takes the form
\[
m^{-\frac{\kappa(\gamma - \beta)(2\alpha - 1)}{2}}.
\]
The rate is optimised by the choice of the largest possible $\gamma$ and $\alpha$. They must satisfy $\gamma < 1/2$ and $\alpha < 1-\hat\beta$, and the upper bounds cannot be reached as related constants might explode. Instead, we pick a small $\lambda > 0$ and set $\alpha = 1 - \beta-\lambda$ and $\gamma = 1/2 - \lambda$. Note that $1-\beta < 1 - \hat \beta$ due to Assumption \ref{ass:smallb}, so $\alpha < 1-\hat\beta$. This leads to
\[
m^{-\frac{\kappa(\gamma - \beta)(2\alpha - 1)}{2}} 
= m^{-\frac{\kappa(1/2 - \lambda - \beta)(1 - 2\beta - 2 \lambda)}{2}} 
= m^{-\kappa(1/2 - \beta - \lambda)^2}.
\]

Finally, let us find the optimal choice for $\kappa$. In order to guarantee the convergence we must have that the exponents in \eqref{eq:d1d2} are negative, that is 
\[
\kappa (\beta + \frac12) -1 <0, \qquad \text{and} \qquad \kappa \frac{ \beta + 1}2 -\frac12 <0,
\]
which simplifies to $\kappa<\frac1{\beta+\frac12}$ and $\kappa< \frac1{\beta+1}$. The second constraint is stronger, so we optimise over $\kappa \in (0, \frac1{\beta+1})$. Then the asymtotic behaviour of \eqref{eq:d1d2} is governed by 
$$
m^{\kappa \frac{ \beta + 1}2 -\frac12};
$$
indeed, this is the leading term as long as $\kappa < 1/\beta$.

In summary, the asymptotic behaviour of \eqref{eq:numerical} is governed by the sum
\[
C_1 m^{\kappa \frac{ \beta + 1}2 -\frac12} + C_2 m^{-\kappa(1/2 - \beta - \lambda)^2}
\]
for some constants $C_1, C_2 > 0$. As the first term is increasing in $\kappa$ while the second term is decreasing in $\kappa$, the sum decreases fastest as $m \to \infty$ if the two exponents are equal:
\[
\kappa \frac{ \beta + 1}2 -\frac12 = -\kappa(1/2 - \beta - \lambda)^2
\]
yielding $\kappa = \frac{1}{(1 + \beta) + 2(1/2 - \beta - \lambda)^2}$. 
We summarise this result in the following theorem. 

\begin{theorem}[Convergence rate]\label{thm:conv}
For any $\lambda \in (0, 1/2 - \beta)$, setting $N(m)=m^\kappa$, where
\[
\kappa= \frac 1{(1+\beta)  + 2 (\frac12 - \beta - \lambda)^2},
\]
yields the convergence rate
\[
\sup_{t \in [0, T]} \mathbb E |X^{N(m),m}_{t} - X_{t}| \leq c  m^{-\frac12 + \frac {(1+\beta)/2}{(1+\beta)  + 2 (\frac12 - \beta - \lambda)^2} }
\]
for some constant $c > 0$ and any $m$ sufficiently large so that $\|B^{N(m)} - B\|_{C_TC^{-(1/2 - \lambda)}} < 1$.
\end{theorem}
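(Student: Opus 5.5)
The plan is to start from Proposition~\ref{prop:diff_num_rsol} and insert the choices already motivated in the preceding discussion. First fix $\epsilon := \beta - \hat\beta$, which is admissible because $\epsilon < \nu \wedge (1-\beta)$ by Assumption~\ref{ass:smallb} and Remark~\ref{rem:beta}. Then fix $\lambda \in (0, 1/2-\beta)$ and set $\gamma = 1/2 - \lambda$ and $\alpha = 1 - \beta - \lambda$. These choices are admissible:
\begin{itemize}
\item $\gamma \in (\hat\beta, 1/2)$, because $\lambda < 1/2 - \beta < 1/2 - \hat\beta$;
\item $\alpha \in (1/2, 1 - \hat\beta)$, because $\lambda < 1/2 - \beta$ gives $\alpha > 1/2$, and $\beta > \hat\beta$ gives $\alpha < 1-\hat\beta$.
\end{itemize}
With these values the hypotheses of Propositions~\ref{prop:p6} and~\ref{prop:rr} hold as soon as $\|B^{N}-B\|_{C_TC^{-(1/2-\lambda)}}<1$. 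This is exactly the largeness condition on $m$ in the statement. It is eventually satisfied along $N(m)=m^\kappa\to\infty$, since by the proof of Proposition~\ref{prop:rr} this norm is $O(N^{-(\gamma-\beta)/2})$ and $\gamma > \beta$.

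Next I bound each term of \eqref{eq:numerical} by a power of $m$ with $N = m^\kappa$. With $\epsilon = \beta - \hat\beta$, $d_1(N)$ and $d_2(N)$ are finite sums of monomials $N^{p}$.
\begin{itemize}
\item In $d_1$ the exponents are at most $\beta + 1/2$ (from $N^{\beta/2}\cdot N^{(\beta+1)/2}$). Hence $d_1(N)m^{-1} \le C m^{\kappa(\beta+1/2)-1}$.
\item In $d_2$ the exponents are at most $\max(\beta, (\beta+1)/2) = (\beta+1)/2$, since $\beta<1$. Hence $d_2(N) m^{-1/2} \le C m^{\kappa(1+\beta)/2 - 1/2}$.
\end{itemize}
The two exponents compare as follows:
\[
\kappa(\beta+\tfrac12) - 1 \le \kappa\tfrac{1+\beta}{2} - \tfrac12 \iff \kappa\tfrac{\beta}{2} \le \tfrac12,
\]
which holds because $\kappa < 1/(1+\beta) < 1/\beta$. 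So the $d_1$ term is dominated by the $d_2$ term for $m \ge 1$.

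The third term equals $c_{15}c_{16}^{2\alpha-1}N^{-(\gamma-\beta)(2\alpha-1)/2}$. Since $(\gamma-\beta)(2\alpha-1) = 2(1/2-\beta-\lambda)^2$, this is $C m^{-\kappa(1/2-\beta-\lambda)^2}$.

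The final step is to check that the chosen $\kappa$ equalises the two remaining exponents. From $\kappa\big((1+\beta)+2(1/2-\beta-\lambda)^2\big)=1$ we get
\[
-\kappa(\tfrac12-\beta-\lambda)^2 = \kappa\tfrac{1+\beta}{2} - \tfrac12 = -\tfrac12 + \tfrac{(1+\beta)/2}{(1+\beta)+2(\frac12-\beta-\lambda)^2},
\]
which is the claimed rate. The constant $c$ collects the $N$-independent constants $c_9,\dots,c_{16}$, which depend on $\lambda$, $\beta$, $\hat\beta$, $T$ and the data.

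The only point needing care is that every constant stays independent of $m$ and $N$ once $\lambda$ is fixed. For $c_{15}$ and $c_{16}$ this is stated in Propositions~\ref{prop:p6} and~\ref{prop:rr}, and $c_b$ is fixed once $\epsilon$ is. Beyond that, the argument is bookkeeping of exponents.
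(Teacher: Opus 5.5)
Your proposal is correct and follows the paper's own derivation. You start from Proposition~\ref{prop:diff_num_rsol}, set $\epsilon=\beta-\hat\beta$, $\gamma=1/2-\lambda$ and $\alpha=1-\beta-\lambda$, identify $N^{(1+\beta)/2}m^{-1/2}$ as the dominant Euler term (using $\kappa<1/\beta$), and choose $\kappa$ to equalise its exponent with $-\kappa(1/2-\beta-\lambda)^2$, exactly as the paper does; your explicit admissibility checks on $\alpha,\gamma$ and the remark that the largeness condition is eventually met are minor additions to the paper's argument.
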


\section{Convergence rate with an approximate solution to PDE}\label{sec:rate_approx_PDE}

Our analysis so far assumed that the solution of the PDE for $\rho^N$ is known exactly. This allowed us to distill the probabilistic aspects of the numerical simulation and derive the bound for the approximation error. However, in practice $\rho^N$ must be computed numerically and we will show how this numerical approximation must be done in order to preserve the convergence rate from Theorem \ref{thm:conv} with the same choice of $N(m)$. In this theorem, $m$ stands for the cost of computing one trajectory of the approximate solution of SDE \eqref{eq:MVSDEInit}; indeed, the real computing cost is proportional $m$. The cost of generating multiple trajectories is an appropriate multiplicity of the cost of generating one trajectory. However, the calculation of $\rho^N$ is done only once, so the cost of it is independent from the number of trajectories of $(X_t)$ that are subsequently simulated and it cannot be incorporated into the bound stated in terms of $m$.

Denote by $\hat\rho^N$ a numerical approximation of $\rho^N$, by $\hat X^N$ the solution to the SDE \eqref{eq:MVSDEReg} with $B^N$ replaced by $\hat B^N(x) := F(\hat\rho(x)) b^N(x)$ and by $\hat X^{N, m}$ the Euler approximation of $\hat X^N$. In the following corollary we formulate conditions on the PDE solution $\hat\rho^N$ so that the bound \eqref{eq:numerical} holds with different constants but the same exponents for $m$ and $N$. This will allow us to conclude with the same convergence rate as in Theorem \ref{thm:conv} as the derivation preceding this theorem was purely based on the exponents of $N$ and $m$ in \eqref{eq:numerical}.

\begin{theorem}\label{cor:conv}
Choose $\gamma \in (\hat\beta, 1/2)$ as in Proposition \ref{prop:rr} and $\hat\gamma \in (\gamma, 1) \cap (\beta, 1-\beta)$ as in its proof. Assume that a numerical solver for $\rho^N$ is available with the approximation $\hat\rho^N$ satisfying the following properties: there is a constant $C>0$ such that
\begin{itemize}
\item (property preservation) 
 \begin{align}
  &[\hat \rho^N]_{1/2, L^\infty} \le C [\rho^N]_{1/2, L^\infty}, \label{eqn:p1}\\
  &\|\hat\rho^N_x\|_{L^\infty_T L^\infty} \le C \|\rho^N_x\|_{L^\infty_T L^\infty}, \label{eqn:p2}\\
  &\|\hat\rho^N\|_{C_T C^{\hat\gamma}} \le C \|\rho^N\|_{C_T C^{\hat\gamma}},\label{eqn:p3}
 \end{align}
\item (convergence)
\begin{equation}\label{eqn:p4}
 \|\hat\rho^N - \rho^N\|_{C_TC^{\hat\gamma}} \le C N^{-\frac{\gamma - \beta}{2}}.
\end{equation}
\end{itemize}
Taking $\lambda$ and $N(m) = m^\kappa$ as in Theorem \ref{thm:conv}, we have
\[
\sup_{t \in [0, T]} \E |\hat X^{N(m),m}_{t} - X_{t}| \leq c  m^{-\frac12 + \frac {(1+\beta)/2}{(1+\beta)  + 2 (\frac12 - \beta - \lambda)^2} }
\]
for some constant $c > 0$.
\end{theorem}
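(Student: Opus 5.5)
The plan is to reproduce the error splitting \eqref{eq:TriangleIneq} and show that each of the three terms satisfies a bound with the same exponents in $N$ and $m$ as in \eqref{eq:numerical}. Only the constants are allowed to change. The balancing argument preceding Theorem \ref{thm:conv} uses nothing but these exponents, so with $N(m)=m^\kappa$ it then yields the stated rate verbatim.

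\emph{Euler term.} $\hat B^N = F(\hat\rho^N) b^N$ has exactly the structure of $B^N$, so we rerun the proofs of Lemmas \ref{lem:BNhalf} and \ref{lem:BNder} with $\hat\rho^N$ in place of $\rho^N$.
\begin{itemize}
\item In Lemma \ref{lem:BNhalf}, the only property of $\rho^N$ used is $[\rho^N]_{1/2,L^\infty}$, bounded via Lemma \ref{lem:rhoN_time_reg}. By \eqref{eqn:p1} we get $[\hat\rho^N]_{1/2,L^\infty}\le C(c_5+c_6N^{\frac{\epsilon+\hat\beta}2})$.
\item In Lemma \ref{lem:BNder}, the only property used is $\|\rho^N_x\|_{L^\infty_TL^\infty}$, bounded via Lemma \ref{lem:rhoN_der_reg_time}. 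By \eqref{eqn:p2} this is replaced by $C(c_7+c_8N^{\frac{\epsilon+\hat\beta}2})$.
\end{itemize}
Hence $\hat B^N\in C^{1/2}_TL^\infty\cap L^\infty_TC^1_b$, with the bounds of Lemmas \ref{lem:BNhalf}--\ref{lem:BNder} holding after the constants $c_{10},\dots,c_{13}$ are multiplied by $C$. Then \cite[Proposition~4]{chaparro2023} gives $\sup_t\E|\hat X^{N,m}_t-\hat X^N_t|\le \hat d_1(N)m^{-1}+\hat d_2(N)m^{-1/2}$, where $\hat d_i$ have the same powers of $N$ as $d_i$. One point needs checking: \cite[Proposition~4]{chaparro2023} needs only the regularity of the drift, not that $\rho^N$ is the law of the solution. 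This is clear, because in that result the drift is an arbitrary given function.

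\emph{Remaining two terms.} Rather than treating the middle and last terms of \eqref{eq:TriangleIneq} separately, I would bound $\sup_t\E|\hat X^N_t-X_t|$ directly. This avoids needing a stability result between two smooth SDEs. Proposition \ref{prop:p6} applies with $B^N$ replaced by $\hat B^N$: its proof (following \cite[Section~4]{chaparro2023}) uses only that the approximating drift is a nice function close to $B$ in $C_TC^{-\gamma}$. In particular $\hat\rho^N$ need not be a density of $\hat X^N$. This gives $\sup_t\E|\hat X^N_t-X_t|\le c_{15}\|\hat B^N-B\|_{C_TC^{-\gamma}}^{2\alpha-1}$ once the norm is below $1$.

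\emph{Main estimate.} It remains to show $\|\hat B^N-B\|_{C_TC^{-\gamma}}\le \hat c_{16}N^{-\frac{\gamma-\beta}2}$, following the proof of Proposition \ref{prop:rr}. Using Bony's paraproduct bound as there,
\[
\|\hat B^N-B\|_{C_TC^{-\gamma}}\le \|F(\hat\rho^N)\|_{C_TC^{\hat\gamma}}\|b^N-b\|_{C_TC^{-\gamma}}+\|b\|_{C_TC^{-\gamma}}\|F(\hat\rho^N)-F(\rho)\|_{C_TC^{\hat\gamma}}.
\]
\begin{itemize}
\item The first factor is bounded through Lemma \ref{lem:prop_op_F} and \eqref{eqn:p3} by $c_\varphi(1+C\sup_N\|\rho^N\|_{C_TC^{\hat\gamma}})$.
\item For the second, Lemma \ref{lem:prop_op_F} gives a bounded prefactor, again using \eqref{eqn:p3}, times $\|\hat\rho^N-\rho\|_{C_TC^{\hat\gamma}}$. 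Split this as $\|\hat\rho^N-\rho^N\|_{C_TC^{\hat\gamma}}+\|\rho^N-\rho\|_{C_TC^{\hat\gamma}}$. The first piece is at most $CN^{-\frac{\gamma-\beta}2}$ by \eqref{eqn:p4}. The second is at most $c_lc_S^2N^{-\frac{\gamma-\beta}2}\|b\|_{C_TC^{-\gamma}}$, exactly as in Proposition \ref{prop:rr}.
\end{itemize}
Together these give the required bound with a new constant $\hat c_{16}$. It also shows that $\|\hat B^{N(m)}-B\|_{C_TC^{-\gamma}}<1$ for $m$ large, which is the condition needed to invoke Proposition \ref{prop:p6}.

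I expect the main obstacle to be justifying that Proposition \ref{prop:p6} and the Euler estimate of \cite{chaparro2023} transfer to the drift $\hat B^N$, which is not tied to any Fokker--Planck equation. In particular, the Zvonkin-type transform $u^N$ built from $\hat B^N$ must still satisfy the estimates of \cite[Lemma 11]{chaparro2023}. I would argue this by noting that those estimates depend on the drift only through its $C_TC^{-\gamma}$ norm and its distance to $B$, both of which are controlled uniformly in $N$ by the bounds above. Combining the two pieces gives \eqref{eq:numerical} for $\hat X^{N,m}$ with modified constants, and the optimisation over $\kappa$ yields the claim.
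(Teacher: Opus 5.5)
Your proposal is correct, and your Euler-term argument is the same as the paper's. The difference lies in how you handle the stability part.

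The paper keeps the three-term split \eqref{eqn:d1}:
\begin{itemize}
\item The last term $\E|X^N_t-X_t|$ is Proposition~\ref{prop:rr}, cited verbatim.
\item The middle term $\E|\hat X^N_t-X^N_t|$ is bounded by ``an analogue of \eqref{eqn:63}'' for two SDEs with smooth drifts. This gives $c_{15}\|\hat B^N-B^N\|^{2\alpha-1}_{C_TC^{-\gamma}}$ with $\|\hat B^N-B^N\|_{C_TC^{-\gamma}}\le\|b^N\|_{C_TC^{-\gamma}}\|F(\hat\rho^N)-F(\rho^N)\|_{C_TC^{\hat\gamma}}$, which is then controlled by \eqref{eqn:p3}--\eqref{eqn:p4}.
\end{itemize}
The final bound therefore has four terms.

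You instead merge the last two terms. You apply Proposition~\ref{prop:p6} once, with $\hat B^N$ as the approximant of $B$. You then re-run the estimate of Proposition~\ref{prop:rr} for $\|\hat B^N-B\|_{C_TC^{-\gamma}}$, using $\|\hat\rho^N-\rho\|\le\|\hat\rho^N-\rho^N\|+\|\rho^N-\rho\|$.

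What each route buys:
\begin{itemize}
\item The paper's route reuses Proposition~\ref{prop:rr} as a black box and isolates the solver error in its own term. This makes visible that \eqref{eqn:p4} is calibrated exactly to the rate $N^{-\frac{\gamma-\beta}{2}}$ of the third term. However, it requires a stability estimate whose ``limit'' drift is the smooth $B^N$, with a constant uniform in $N$. The paper asserts this estimate rather than deriving it.
\item Your route uses Proposition~\ref{prop:p6} only in its original configuration: a smooth approximant against the distributional drift $B$. The one thing to check is the point you flag, namely that the argument of \cite[Section~4]{chaparro2023} does not depend on the particular form of the approximant. This is no stronger than what the paper already assumes. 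Proposition~\ref{prop:p6} is itself applied to $F(\rho^N)b^N$, which is not a mollification of $B$.
\end{itemize}

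The price of your route is re-deriving the bound of Proposition~\ref{prop:rr} instead of citing it. The exponents of $N$ and $m$, and hence the rate, are identical.
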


\begin{remark}
In the proof of Proposition \ref{prop:rr} we chose arbitrarily $\hat\gamma=1/2$ as this choice had no effect on the convergence rate, only modifying constants. However, the added flexibility of choosing $\hat\gamma$ in Theorem \ref{cor:conv} allows one to use the most convenient value in order to establish the bounds \eqref{eqn:p3} and \eqref{eqn:p4}. 
\end{remark}

\begin{proof}[Proof of Theorem \ref{cor:conv}]
We decompose the approximation error as
\begin{equation}\label{eqn:d1}
\E |\hat X^{N,m}_{t} - X_{t}| \le \E |\hat X^{N,m}_{t} - \hat X^N_{t}| + \E |\hat X^{N}_{t} - X^N_{t}| + \E |X^{N}_{t} - X_{t}|.
\end{equation}
The first term is analogous as $E |X^{N,m}_{t} - X^N_{t}|$ but with $\rho^N$ replaced by $\hat\rho^N$.  The second term is new and its bound will employ the convergence assumption. The last term is bounded in Proposition \ref{prop:rr}.

We will show that the first term of \eqref{eqn:d1} enjoys a bound of the form \eqref{eqn:nr} with the same exponents of $N$ but with different constants. Using  \cite[Proposition 4]{chaparro2023} we have
\[
\sup_t \mathbb E|\hat X^{N, m} - \hat X^{N}| \leq \hat d_1(N) m^{-1} + \hat d_2(N) m^{-\frac12}
\]
with
\[
\hat d_1(N) = \|\hat B^N\|_{L^\infty_T L^\infty}\big(1 + \|\hat B^N_x\|_{L^\infty_T L^\infty} \big), \qquad \hat d_2(N) = \|\hat B^N_x\|_{L^\infty_T L^\infty} + [\hat B^N]_{\frac12, L^\infty}.
\]
We only need to prove that there is a constant $\hat C > 0$ such that $\hat d_1(N) \le \hat C d_1(N)$ and $\hat d_2 (N) \le \hat C d_2(N)$, that is, we have to find analogous bounds  as in Lemmata \ref{lem:BNhalf} and \ref{lem:BNder} for the three terms  $\|\hat B^N\|_{L^\infty_T L^\infty}, [\hat B^N]_{\frac12, L^\infty}$ and $\|\hat B^N_x\|_{L_T^\infty L^\infty}$.  We first notice that the estimate for $\|\hat B^N\|_{L^\infty_T L^\infty}$ in Lemma \ref{lem:BNhalf} uses only the bounedness of $F$ and does not depend on its argument, hence $\|\hat B^N\|_{L^\infty_T L^\infty}= \| B^N\|_{L^\infty_T L^\infty}$. 
For the second term, let us notice that in the arguments in the proof of Lemma \ref{lem:BNhalf}, the only difference arising from the estimation of $[\hat B^N]_{\frac12, L^\infty}$ instead of $[B^N]_{\frac12, L^\infty}$ is in 
\begin{align*}
\| b^N(s, \cdot) [F(\hat\rho^N(t, \cdot))- F(\hat\rho^N(s, \cdot))] \|_{L^\infty}
&\leq \| b^N \|_{C_T L^\infty} \|F_x\|_\infty \| \hat \rho^N(t, \cdot) - \hat \rho^N(s, \cdot)\|_{L^\infty}\\
&\leq \| b^N \|_{C_T L^\infty} \|F_x\|_\infty [\hat \rho^N]_{\frac12, L^\infty} |t-s|^{1/2} \\
&\leq \| b^N \|_{C_T L^\infty} \|F_x\|_\infty C [\rho^N]_{\frac12, L^\infty} |t-s|^{1/2},
\end{align*}
where the last inequality is by \eqref{eqn:p1}. This leads to
\[
[\hat B^N]_{\frac12, L^\infty} \le c_{10} N^{\frac{\epsilon + \hat \beta}{2}} + C c_{11} N^{\epsilon + \hat\beta}
\]
in the terminology of Lemma \ref{lem:BNhalf} with the difference being the constant $c_{11}$ multiplied by $C$. 
The final norm to bound is $\|\hat B^N_x\|_{L^\infty_T L^\infty}$ and we refer to the proof of Lemma \ref{lem:BNder}. In an analogue of \eqref{eqn:36} written for $\hat B^N$, we bound $\|\hat\rho^N_x\|_{L^\infty_T L^\infty}$ by $C \|\rho^N_x\|_{L^\infty_T L^\infty}$ as per \eqref{eqn:p2} to obtain
\[
\|\hat B^N_x\|_{L_T^\infty L^\infty}
\leq \|F_x(\hat \rho^{N })\|_{L_T^\infty L^\infty} C \|\rho^{N }_x\|_{L_T^\infty L^\infty} \|b^{N }\|_{L_T^\infty L^\infty} + \|F(\hat \rho^{N })\|_{L_T^\infty L^\infty} \|b^{N }_{x }\|_{L_T^\infty L^\infty}.
\]
In the proof of Lemma \ref{lem:BNder}, $\|F_x(\hat \rho^{N })\|_{L_T^\infty L^\infty}$ is bounded by $\|F_x\|_\infty$ and the remaining terms do not involve $\hat\rho$. Hence, $\|\hat B^N_x\|_{L^\infty_T L^\infty}$ is bounded analogously as \eqref{eq:BNx}
\[
      \|\hat B^N_x\|_{L_T^\infty L^\infty}
      \leq
      C c_{12} N^{\frac{\epsilon + \hat\beta}{2}} 
      + C c_{13} N^{\epsilon + \hat\beta}
      + c_{14} N^{\frac{\epsilon + \hat\beta + 1}{2}},
\]
where the constant $C$ multiplies the first and second term compared to \eqref{eq:BNx}. We therefore conclude that there is a constant $\hat C$ such that $\hat d_1(N) \le \hat C d_1(N)$ and $\hat d_2 (N) \le \hat C d_2(N)$.

For the second term of \eqref{eqn:d1} we apply an analogue of the bound \eqref{eqn:63} using the notation and arguments from the proof of Proposition \ref{prop:rr}:
\[
\sup_t \E|\hat X^{N}_{t} - X^N_{t}| \le c_{15} \|\hat B^N - B^N\|^{2\alpha-1}_{C_TC^{-\gamma}},
\]
where
\[
\|\hat B^N - B^N\|_{C_TC^{-\gamma}} = \|F(\hat \rho^N) b^N - F(\rho^N) b^N\|_{C_TC^{-\gamma}}
\le \|b^N\|_{C_T C^{-\gamma}}\|F(\hat \rho^N) - F(\rho^N)\|_{C_TC^{\hat\gamma}}.
\]
Further, we have
\begin{align*}
\|F(\hat \rho^N) - F(\rho^N)\|_{C_TC^{\hat\gamma}} 
&\le c_\varphi \big(1 + \sup_N \|\hat\rho^N\|^2_{C_TC^{\hat\gamma}} + \sup_N \|\rho^N\|^2_{C_TC^{\hat\gamma}}\big)^{1/2} \|\hat\rho^N - \rho^N\|_{C_TC^{\hat\gamma}}\\
&\le c_\varphi \big(1 + (C+1)\sup_N \|\rho^N\|^2_{C_TC^{\hat\gamma}}\big)^{1/2} \|\hat\rho^N - \rho^N\|_{C_TC^{\hat\gamma}}\\
&\le c_\varphi \big(1 + (C+1)\sup_N \|\rho^N\|^2_{C_TC^{\hat\gamma}}\big)^{1/2} C N^{-\frac{\gamma - \beta}{2}},
\end{align*}
where the second inequality follows from \eqref{eqn:p3} and the last inequality is from \eqref{eqn:p4}.

In conclusion we obtain an analogue of \eqref{eq:numerical} of the form
\begin{align*}
&\sup_{t \in [0, T]} \mathbb E |\hat X^{N,m}_{t} - X_{t}|\\
&\leq
    \hat C d_1(N) m^{-1} + \hat C d_2(N) m^{-\frac12}
    +
    c_{15} \left(\hat c_{16} N^{-\frac{\gamma-\beta}{2}}\right)^{2\alpha - 1}
    +
    c_{15} \left(c_{16} N^{-\frac{\gamma-\beta}{2}}\right)^{2\alpha - 1},
\end{align*}
where $\hat c_{16} = c_\varphi \big(1 + (C+1)\sup_N \|\rho^N\|^2_{C_TC^{\hat\gamma}}\big)^{1/2} C$ depends on the choice of $\hat \gamma$. This bound, up to constants, is identical to \eqref{eq:numerical}, so the arguments preceding Theorem \ref{thm:conv} apply without any changes and the thesis of Theorem \ref{thm:conv} holds.
\end{proof}

\section{Numerical illustration} \label{sec:numerics}
In this section  we show that the proposed numerical scheme is implementable and produces an empirical convergence rate which is consistent with the theoretical one  (albeit  better). We quantify the empirical convergence rate across different $b$ and $F$ in order to observe the behaviour of the solution $X$ for different cases of the drift $F(\rho)b$. Finally we check whether the scheme provides approximations which are consistent with the theory, by comparing the empirical density of the Euler-Maruyama solution of $\hat X^N$ with the density $\hat \rho^N$ obtained by a numerical solution of the Fokker-Planck PDE.

For the implementation we need to choose an example of distribution $b$. We restrict to the time-homogeneous case for the numerical implementation. 
The typical example of element $b$ in $C^{(-\beta)+}$ is obtained as the derivative of a non-differentiable function, more precisely,  if we consider functions $h \in C^{(-\beta + 1)+}$ then formally their derivative $h'$ is in  $ C^{(-\beta)+}$, so $h'$ plays the role of $b$. Since $b$ is a Schwartz distribution, we are unable to evaluate it directly and require a suitable approximation for it, which is obtained by convoluting $b$ with  a heat kernel $p_{1/N}$. In particular, we have the approximation $b^N$ given by $ h' \ast p_{1/N}$. Notice that  the implementation of $b^N$ on a machine is still not possible because $ h'$ is a distribution. To overcome this problem, we use the commutativity of convolution with respect to a derivative in our favour, namely we use the equality  $b^N= h' \ast p_{1/N} = h \ast p'_{1/N}$, to obtain an operative definition of $b^N$ since both $ h$ and $p'_{1/N}$ are functions. Now for the choice of $h$, it needs to be an element of $ C^{\gamma+}$ for some $\gamma\in (1/2,1)$, and possibly not better than that.  We thus choose a path of a fractional Brownian motion (fBm) with Hurst index $H\in (1/2,1)$. 
 We recommend the reader to see  \cite[ Section 5]{chaparro2023}  for more details.

We use the Python package \texttt{py-pde} \cite{zwickerPypdePythonPackage2020} to solve the Fokker-Planck equation. The solver we use is based on a Runge-Kutta method of order 4(5) for a system of ODEs, which is adapted to solve PDEs.
We choose $T=1$ for simplicity and solve the PDE only once,  with $2^{11}$ time steps and $4 \times 10^3$ points in space, and interpolate the solution to produce a function $\hat\rho^N$ defined on $[0,1]\times \mathbb R$.  The number of time steps is the same as used to compute a proxy for the exact solution.

The numerical solution $\hat \rho^N$ to the Fokker-Planck equation is used in place of the law density of the unknown $X^N$.  Indeed, in \cite{issoglioMcKeanSDEsSingular2023}, the authors prove that the law density of  $X^N$ is the solution to the associated Fokker-Planck PDE \eqref{eq:FPEqnReg}. Thus the McKean-Vlasov equation reduces to an SDE with known drift given by $B^N = F(\hat\rho^N)b^N$. 
\begin{figure}[htbp]
\figtitle{Effect of different $F$ on the drift $F(\rho^N) b^N$}
\begin{minipage}{0.49\textwidth}
    \centering
    \includegraphics[width=0.99\textwidth]{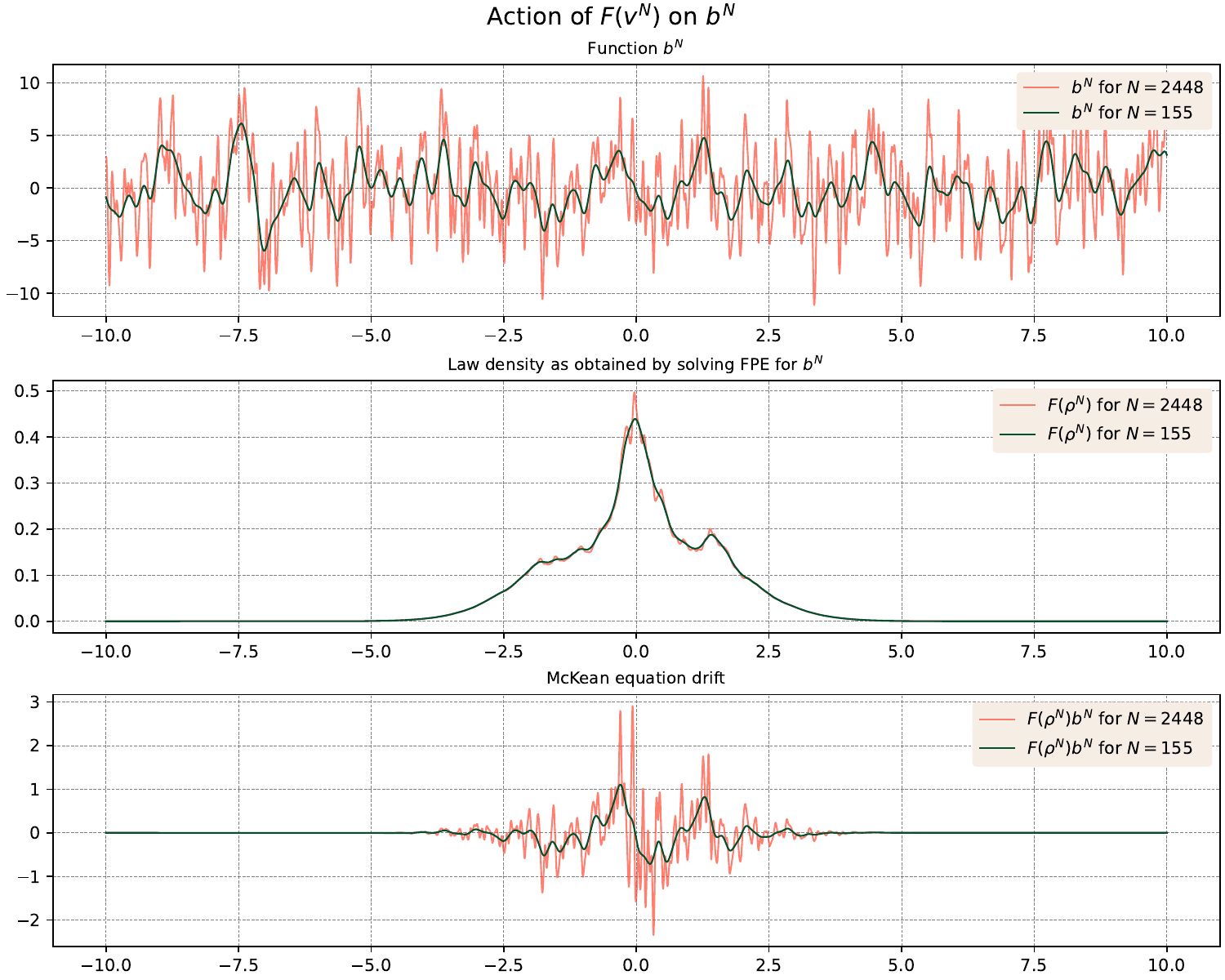}
\end{minipage}
\begin{minipage}{0.49\textwidth}
    \centering
    \includegraphics[width=0.99\textwidth]{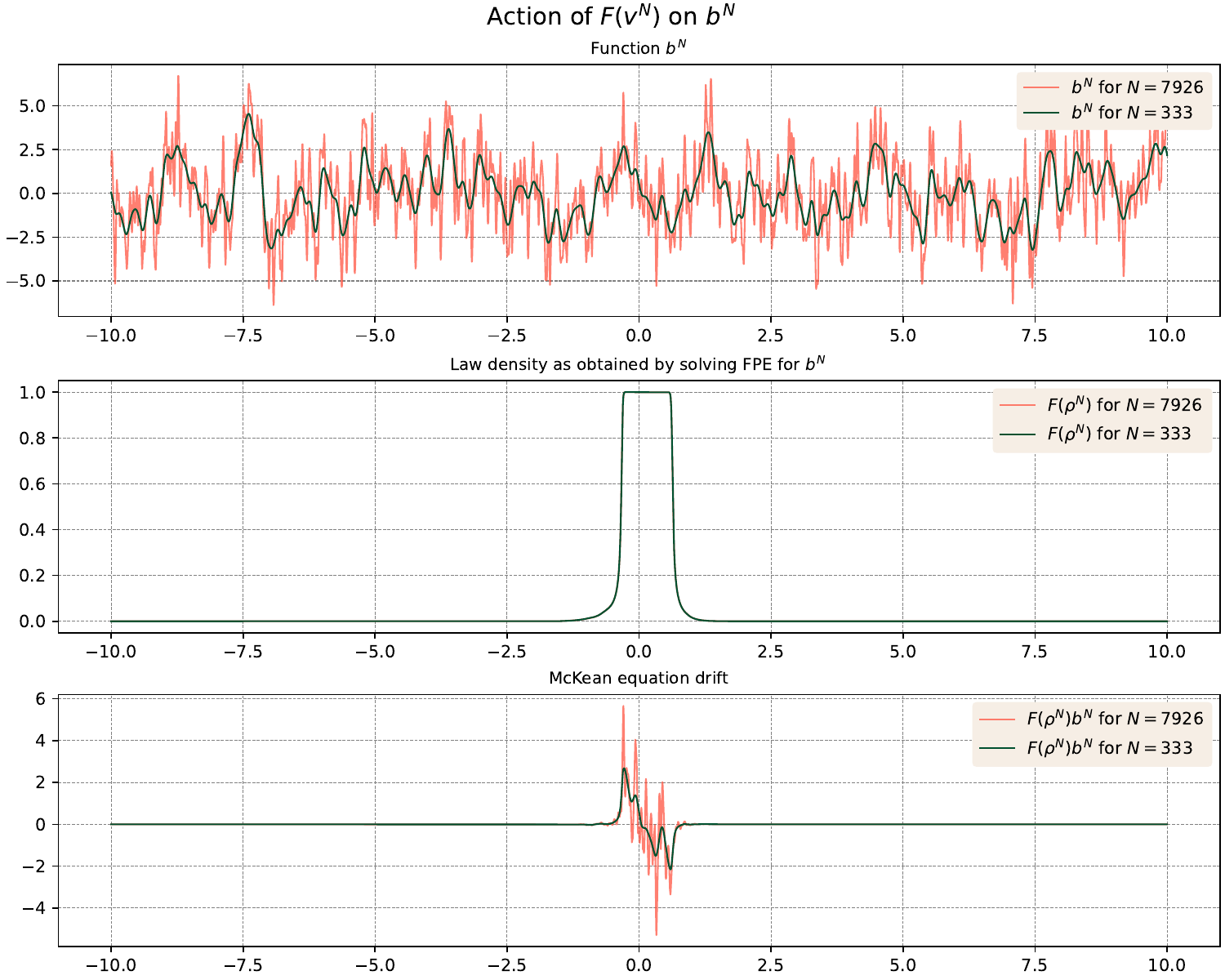}
\end{minipage}    
     \caption{Left panel:  $F(x) = \sin(x)$. Right panel: $F(x) = 1/(1 + \exp(-100(x - 0.2)))$.\\
    Top: the plot of $b^N$ for two different values of $N$ 
    (green line for a `small' $N$, red line for a `large' $N$) for $b\in C^{-\beta}$ with $\beta=0.49$.
   Middle: the plot of $F(\rho^N)$ for two different values of $N$. The  approximation of $\rho^N$ is obtained solving the PDE on a grid with $2^{11}$ time steps and $4\times10^3$ points in $x$.
Bottom: the plot of   the product  $F(\rho^N) b^N$ for two different values of $N$. }
    \label{fig:drift_mckean1}
\end{figure}
In Figure \ref{fig:mckean_rates1} we plot $B^N$ (bottom pictures) for two different values of $N$ (green and red lines) and two different values of $F$ (left and right  panels). The drift $b$ is in $ C^{-\beta}$ with $\beta= 0.49$ and its approximation $b^N$ is shown at the top, while the term $F(\rho^N)$ is plotted  in the middle. The choice of the function $F$ on the left panel is  made to obtain something similar to the identity (at least around zero), while the choice of $F$ on the right panel is an approximation of an indicator function. Notice that the drift $B^N$ is nearly zero (or zero in numerical computations) outside of a bounded interval due to the fact that the main mass of $\hat \rho^N$ is concentrated on a bounded interval (see also Figure \ref{fig:densities_mckean1}). This is unlike $b^N$ which is supported on the whole real line. The overall roughness of the drift $B^N$ is the same as that of $b^N$ since $F(\rho^N)$ is smooth. However, the fact that the drift has practically a compact support makes the SDE very easy to solve numerically whenever the path goes outside the support of $B^N$. We think this may partially explain why we observe that convergence rates for McKean SDEs are slightly higher than those for non-McKean SDEs, c.f. \cite{chaparro2023} (formally equivalent to $F\equiv 1$), even though their drifts $B^N$ and $b^N$ have the same regularity.

Once we have the drift $B^N = F(\hat\rho^N)b^N$, the implementation of the Euler scheme is the one presented for the linear SDE in \cite{chaparro2023}, with the only other difference regarding the initial condition.  Indeed, we cannot use a Dirac delta for the McKean-Vlasov equation, but instead we have to  choose a law which has a suitable density.  For simplicity we stick to a standard normal distribution.
We solve the SDE and store only the values at the terminal time since these are what we will use to calculate the error of the scheme. We use $10^4$ sample paths.
We make a costly computation ($m=2^{11}$ time steps) to get the finest approximation to the SDE, which we treat as the real solution and compare it to each of the coarser approximations ($m= 2^7, 2^8, 2^9$) obtained with less time steps.
We compare the values to calculate the strong error and then compute a linear regression using the $\log$ base 10 of the number of time steps as the explanatory variable and the error as the response, so that the slope obtained in the linear regression is the convergence rate.
This process is done for the different values of $\beta\in(0,1/2)$ that we consider, and the resulting rate is shown in Figure \ref{fig:mckean_rates1}.

\begin{figure}[htbp]  
\figtitle{Theoretical and empirical rate of convergence}
    \begin{minipage}{0.49\textwidth}
    \centering
    \includegraphics[width=0.99\textwidth]{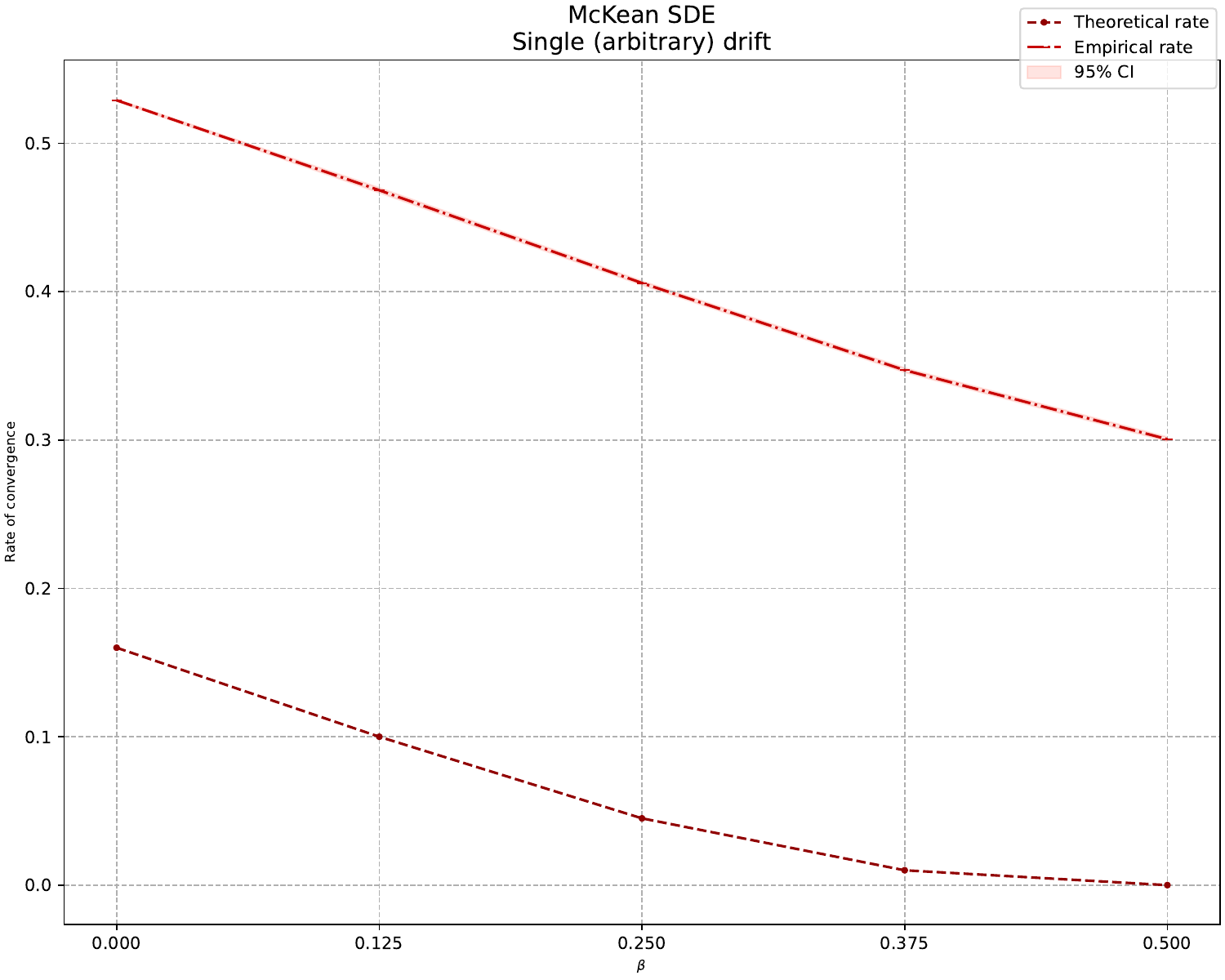}
\end{minipage}
\begin{minipage}{0.49\textwidth}
    \centering
    \includegraphics[width=0.99\textwidth]{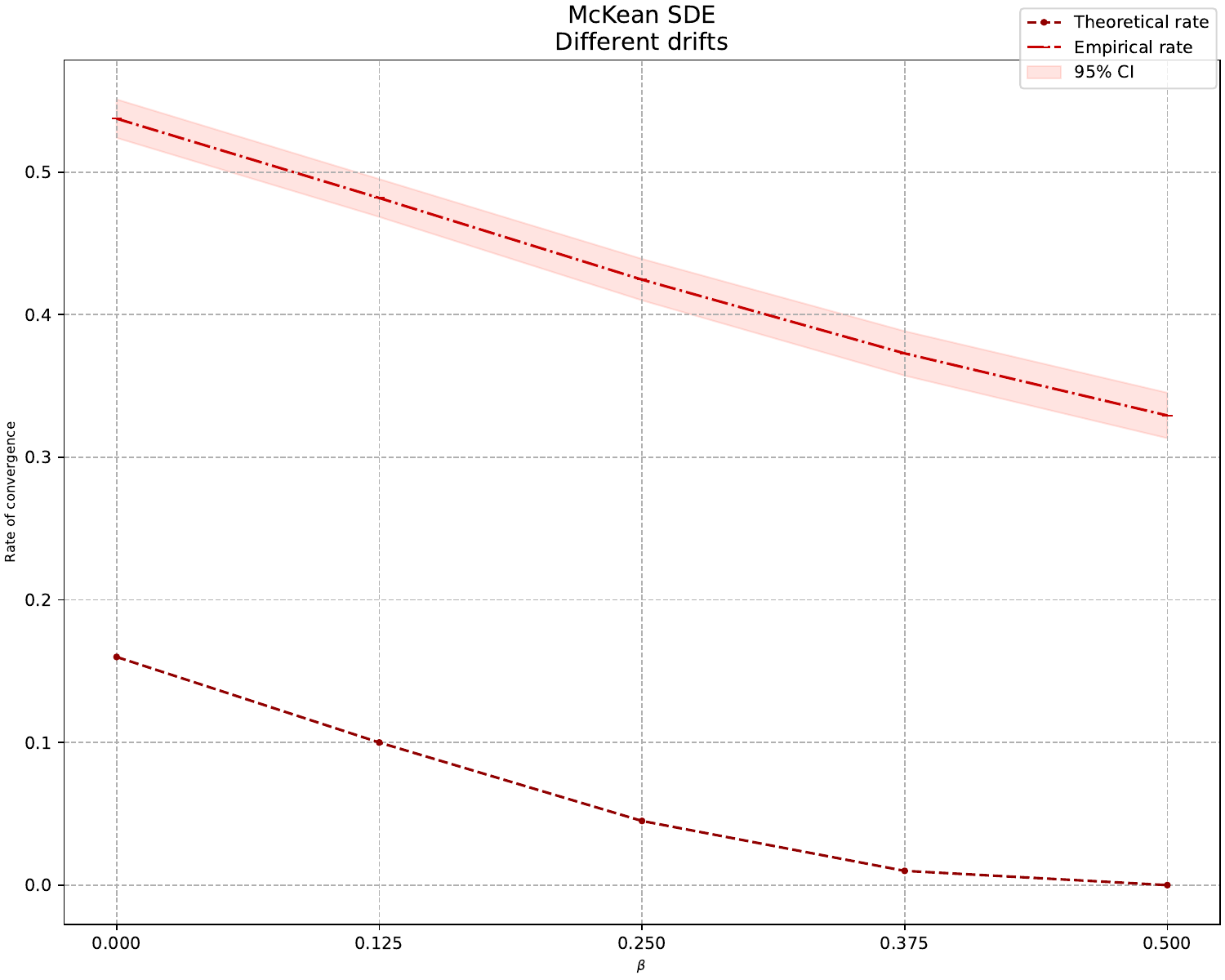}
\end{minipage} 
 \caption{Plot of theoretical (broken line) and empirical (dotted-broken line) convergence rate (vertical axis) against parameter $\beta$ (horizontal axis) obtained with  $F(\hat\rho^N)b^N$ with $b\in \mathcal C^{-\beta}$, $X_0 \sim \mathcal N(0, 1)$ and $F(x) = \sin(x)$. Left panel: we used a singular fixed drift, and  10,000 sample paths of a Brownian motion, run  40 times to obtain the empirical rate and its 95\% confidence inteval (shaded area around empirical rate).  Right panel:  same as before but we used multiple drifts, in particular 40 drifts (a different drift  for evey run).}
  \label{fig:mckean_rates1}
\end{figure}

We remark that using the fBm to produce the approximation $b^N$ makes the drift inherently random and effectively each time we generate a new fBm we are using a different drift which we claim lives in the same space $C^{-\beta}$ with $-\beta < H - 1$ where  $H$ is the Hurst parameter.
However, it is important to notice that this technique is not without flaws,
since the approximation procedure of the distributional drift uses the scale parameter $1/N$ and this can have a `regularization' effect which affects how volatile the approximation $b^N$ of the distribution is. 
We can observe this by looking at Figure \ref{fig:mckean_rates1}.
In Figure \ref{fig:mckean_rates1}, left panel, we see an example where we ran the Euler-Maruyama scheme 40 times (i.e\ 40 runs), with $10^4$ sample paths each run and a fixed drift for each of the values of $\beta =0.01,  0.125, 0.25, 0.375, 0.49$.
Fixing a drift and running the Euler scheme means that we use a single trajectory of fBm for each $\beta$, transform it into our drift $b$, and use different Brownian motion drivers in each of the runs.
In Figure \ref{fig:mckean_rates1}, left panel,  we can observe that the 95\% confidence interval is barely visible, meaning that the rates obtained for the same drift are not far apart considering different Brownian motions.
On the other hand, in Figure \ref{fig:mckean_rates1}, right panel  we do not fix the drift,  but instead we choose 40 different dritfs $b$ for each of the 40 runs, we notice that within the same values of $\beta$ we have some variation of the numerical rate. Of course this is to be expected due to the random nature of our technique for drift generation, since what is really being used to compute the error are effectively different SDEs by virtue of having different drifts.
Nonetheless, the confidence interval is still small, which indicates that our way to generate drifts is meaningful.

The usage of fractional Brownian motion as a base for our drift gives us a powerful method to study drifts with different regularities, and the randomness involved in the generation of a path of a stochastic process  provides as many different examples of drifts as we want, all with the same regularity. This allows us to explore in depth the behaviour of our algorithm.
Another natural choice is to take $b = h'$, where $h$ is a deterministic Hölder continuous function such as the Weierstrass function. However, our smoothing procedure consisting of the convolution with the heat kernel seemed to produce approximate drifts $b^N$ that were too smooth to identify numerically the effect of the roughness of $b$. Further experiments in this direction are left for a future work.

We turn our attention to the comparison between the solution to the Fokker-Planck equation at terminal time, $\hat\rho^N_T$, and  the empirical density of $\hat X^{N, m}_T$. 
\begin{figure}[htbp]
\figtitle{MVSDE densities for $b \in C_TC^{-\beta}$ for different non-linearity $F$}
    \begin{minipage}{0.49\textwidth}  
    \centering    
    \figsubtitle{$\beta = 0.49$}
    \includegraphics[width=0.99\textwidth]{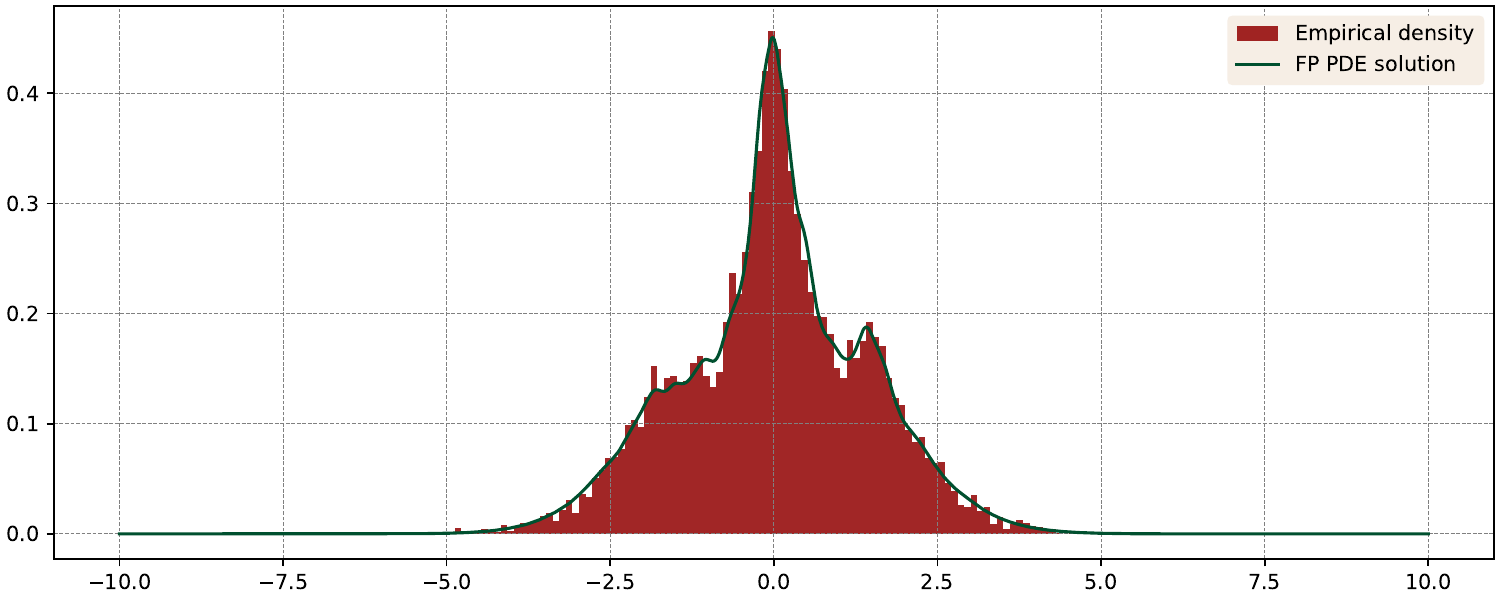}\par\smallskip
    \includegraphics[width=0.99\textwidth]{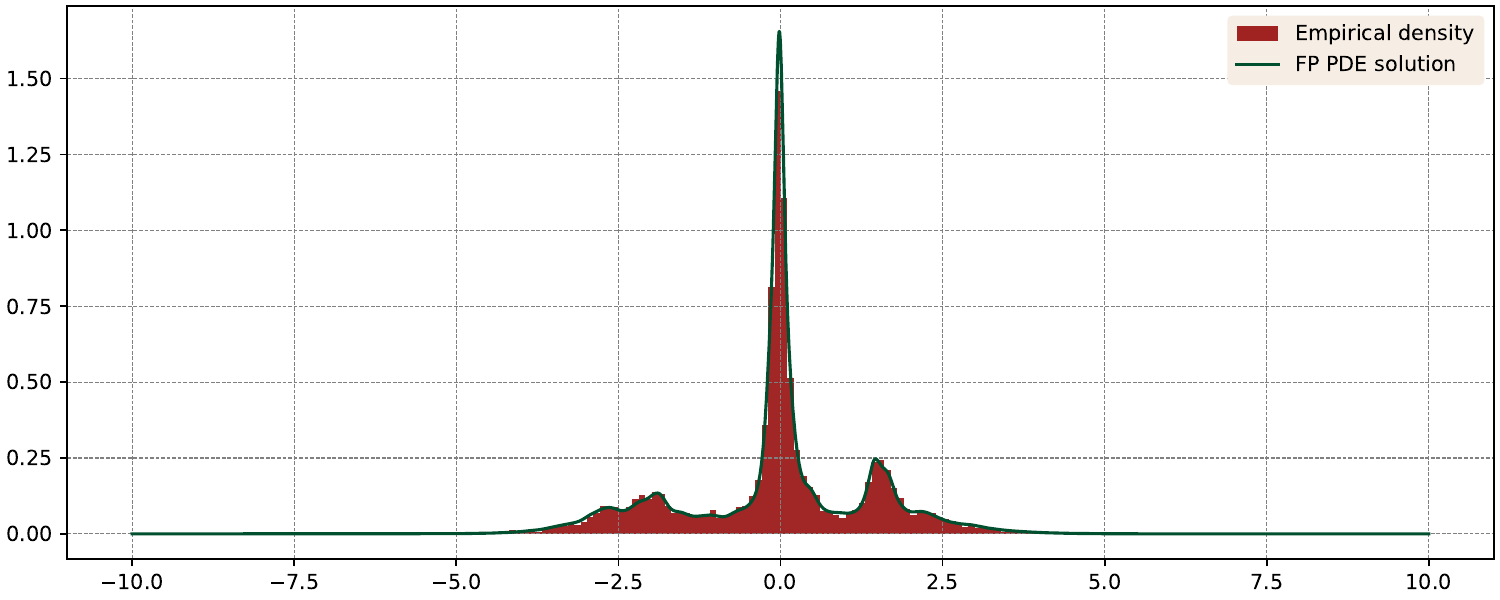}\par\smallskip
    \includegraphics[width=0.99\textwidth]{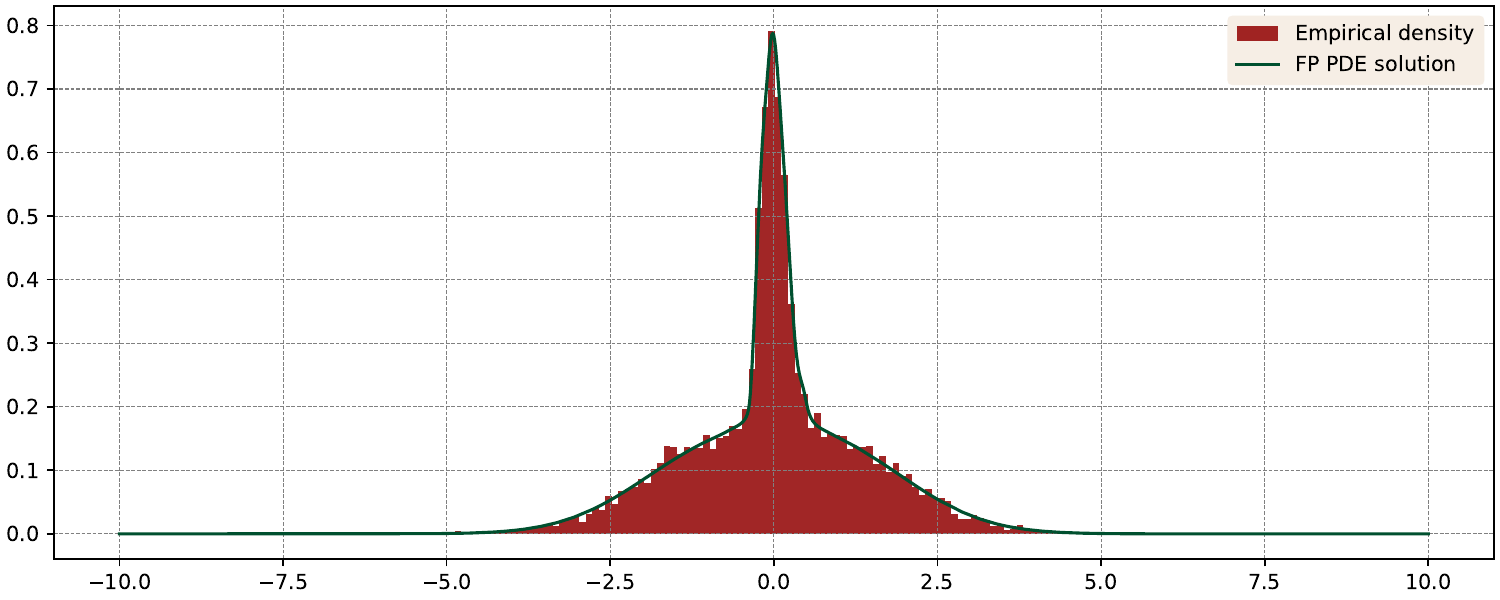}\par\smallskip
    \includegraphics[width=0.99\textwidth]{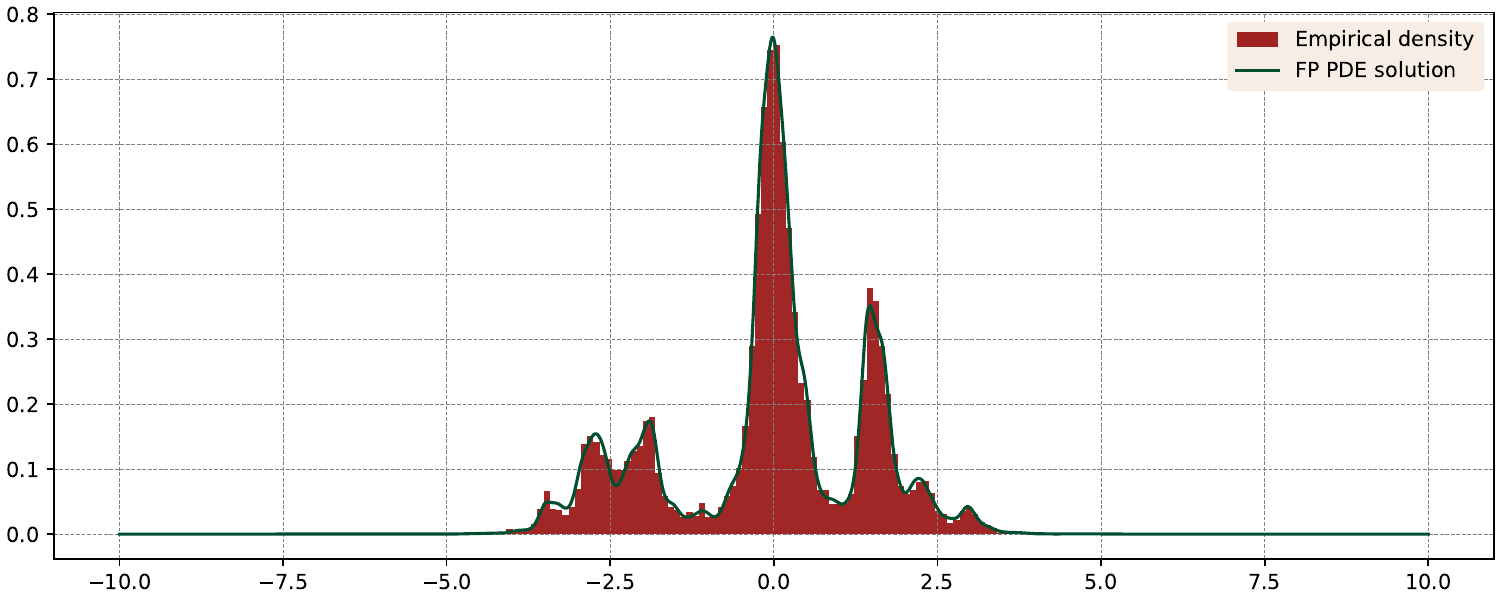}
    \end{minipage} 
    \begin{minipage}{0.49\textwidth}
     \centering
    \figsubtitle{$\beta = 0.01$}
    \includegraphics[width=0.99\textwidth]{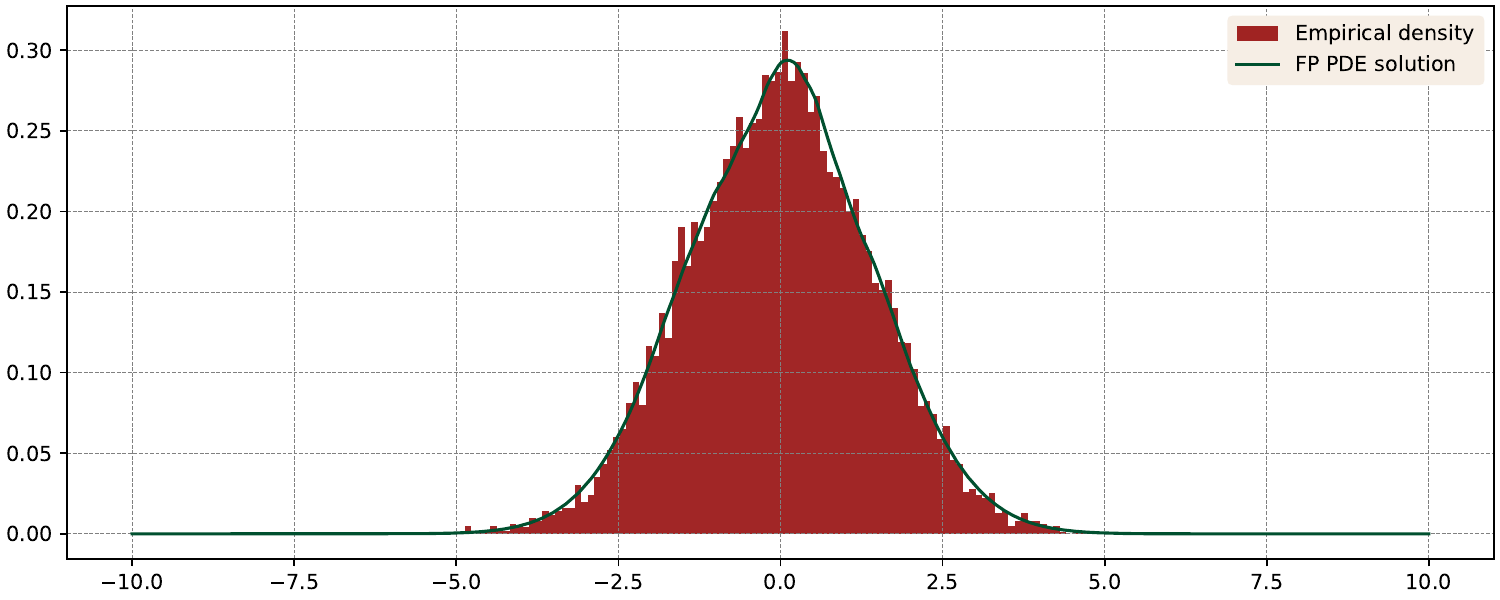}\par\smallskip
    \includegraphics[width=0.99\textwidth]{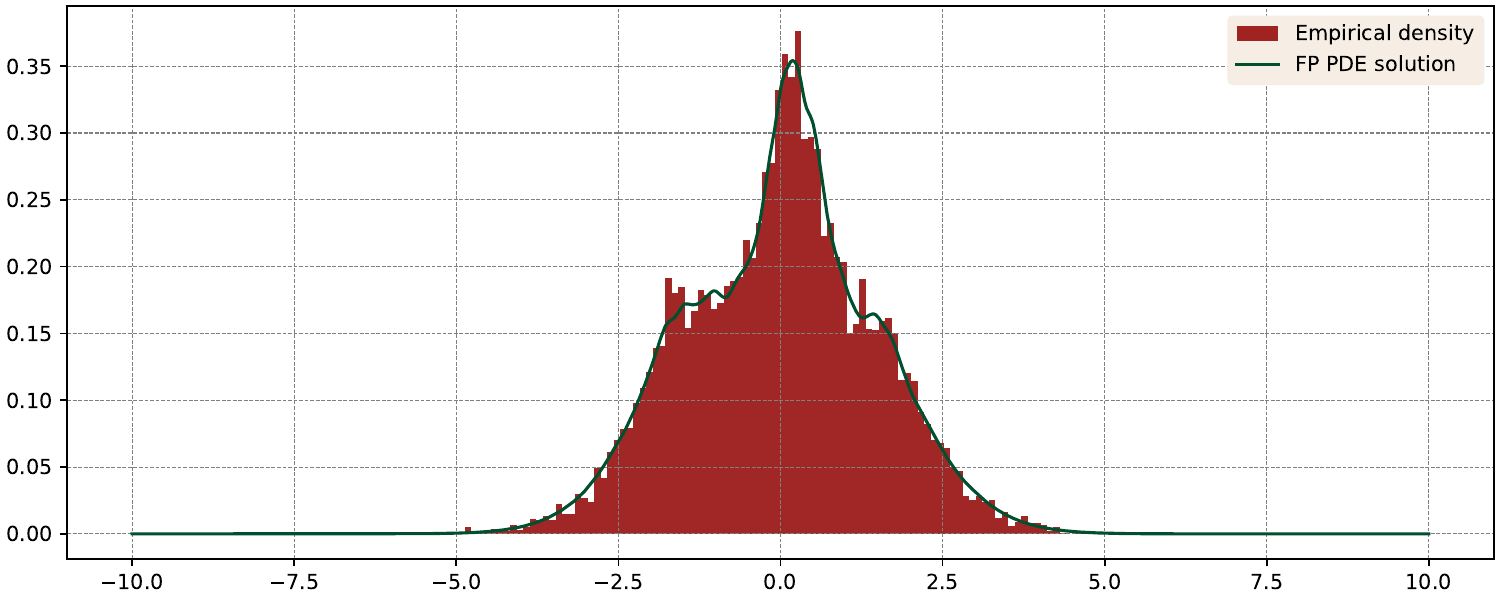}\par\smallskip
    \includegraphics[width=0.99\textwidth]{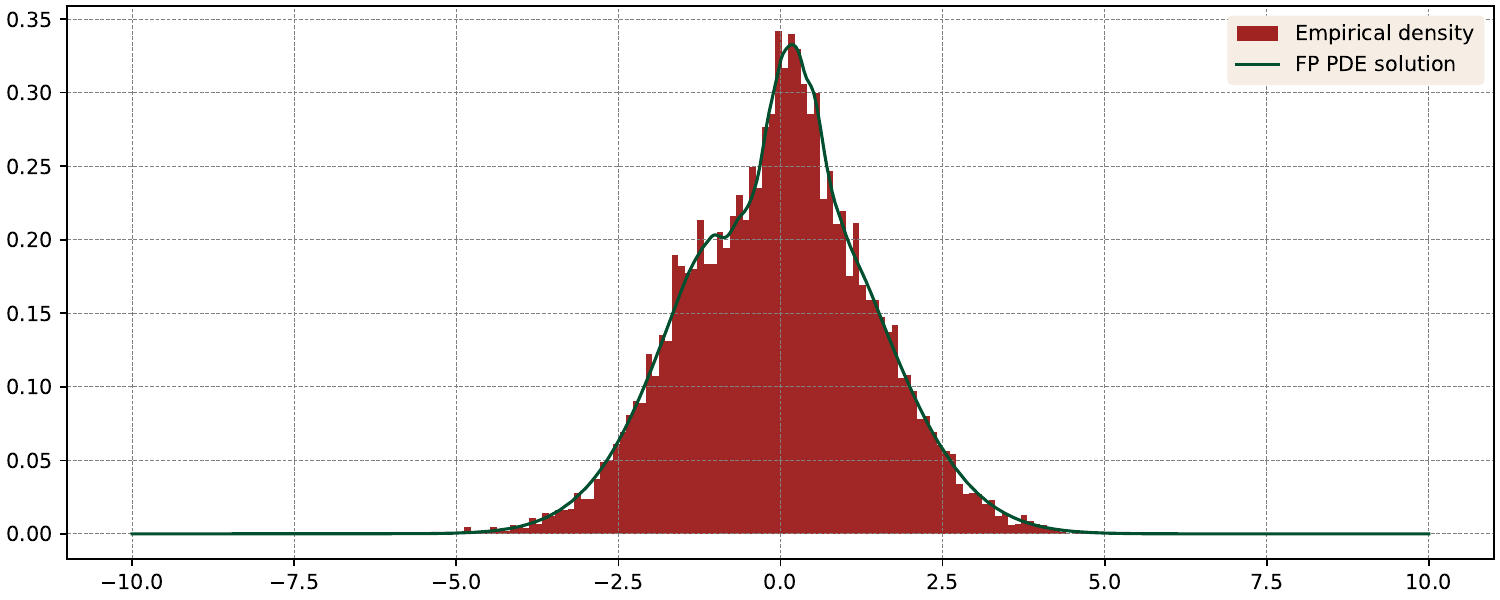}\par\smallskip
    \includegraphics[width=0.99\textwidth]{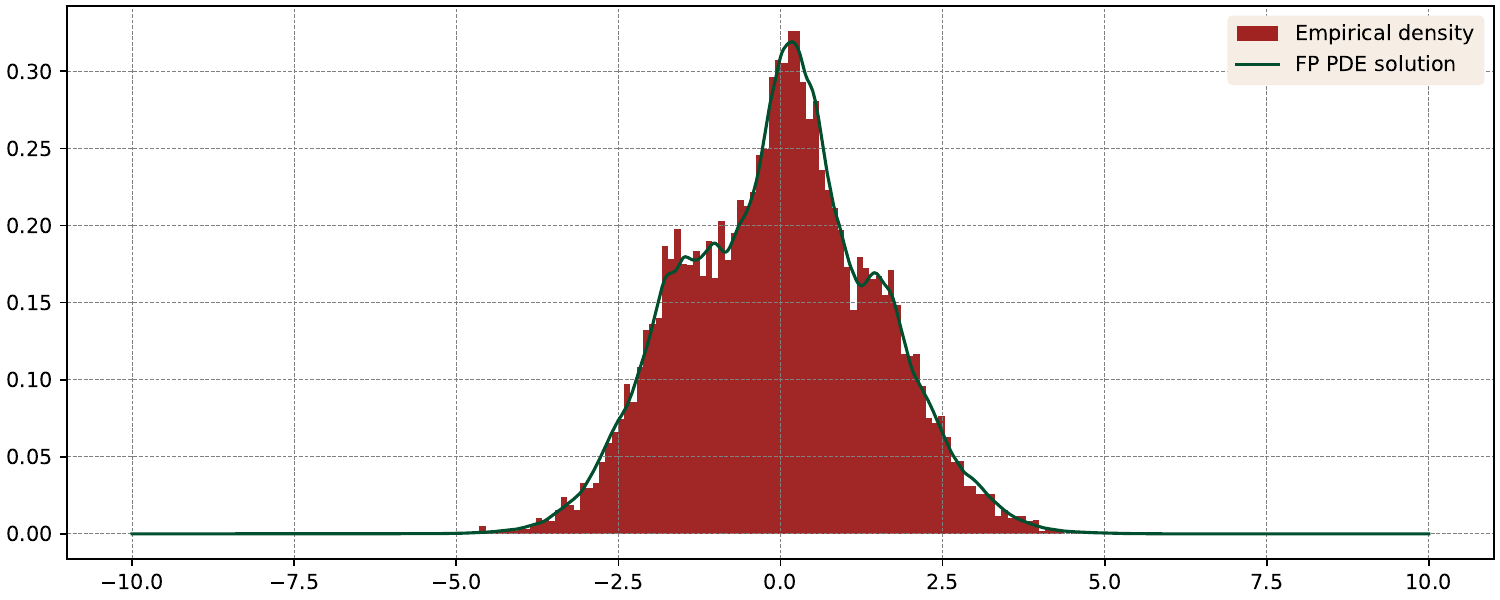}
    \end{minipage}    
    \caption{Representation of $\hat\rho^N$ for $b^N \in C^{-\beta}$ with different values of $F$ and of $\beta$. Left panels $\beta= 0.49$, right panels $\beta= 0.01$. From top to bottom $F(x) = \sin(x)$, $F(x) = 5\sin(x)$, $F(x) = 1/(1 + \exp(-100(x - 0.2)))$, $F(x) = \cos(x)$. SDE solved with $2^{11}$ time steps and $10^4$ sample paths. PDE solved on a grid with $2^{11}$ time steps and $4\times10^3$ points in $x \in [-10, 10]$.}
    \label{fig:densities_mckean1}
\end{figure}
In  Figure \ref{fig:densities_mckean1} we observe the comparison between $\hat\rho^N_T$ (solid line) and the density histogram of $\hat X^{N,m}_T$ (histogram bars)  for 4 different examples of nonlinearity $F$ and two different values of $\beta$.  This allows us to  explore the effect of changing $F$ on the law density of the solution on the same instance of the approximation $b^N$. 
In each example, the solution to the McKean-Vlasov equation is obtained through an Euler-Maruyama scheme with $2^{11}$ time steps, and $10^4$ sample paths, whereas the Fokker-Planck equation is obtained with $2^{11}$ time steps. The number of points in the $x$ variable is  $4\times10^{3}$.
We can visually attest a close fit of the solution of the Fokker-Planck PDE with the histogram. In order to check this fit,  we proceed with a Kolmogorov-Smirnov test for the goodness of fit, whose results are shown in Table \ref{table:ks2}.
\begin{table}[ht]
    \centering
    \begin{tabular}{c|cc|cc}
	& $\beta= 0.49 $ & &  $\beta = 0.01$ \\
        \hline					
         Time steps $m$  &K-S statistic & $p$-value  &K-S statistic & $p$-value \\
        \hline
        $2^7$  &0.0065
	&0.7861
	& 0.0055
	& 0.9186
\\
        $2^9$  & 0.0059
	& 0.8772
	& 0.0057
	& 0.9001
\\
        $2^{11}$ & 0.0061
	& 0.8514
	& 0.0057
	& 0.8964
\\
        \hline
\end{tabular}\\[5pt]
    \caption{Results of the Kolmogorov-Smirnov test for the goodness of fit of the PDE approximation of the law density with respect to the empirical density for McKean-Vlasov SDEs with drift in the space $C^{-\beta}$ for $\beta=0.49, 0.01$ and time steps $ 2^7, 2^9, 2^{11}$ and $F(x) = \cos(x)$.}
    \label{table:ks2}
\end{table}
As we can see, the Kolmogorov-Smirnov test does not allow rejecting the hypothesis that the sample of the solution has a density given by the solution of the Fokker-Planck PDE. Notice that the $p$-values for $\beta=0.01 $ are higher than those for $\beta=0.49$, and this is probably due to the fact that the original $b$ drift is smoother in this case.  
Notice moreover that the $p$-value seems to decrease slightly as we increase the number of time steps, which may at first seem to contradict the fact that both approximation of the SDE and the PDE must be better as we increase the time steps.
However, one must remember that, as the amount of time steps $m$ on the iterative methods increases,  the equation will be more difficult to solve since the approximation $b^{N(m)}$ will be rougher. Thus, as $m$ increases,  there may also be a need to increase the resolution in space of the grid, which instead is kept fixed in our implementations.

\vspace{10pt}
{\noindent\bf Acknowledgement:} The author E.\ Issoglio acknowledges partial financial support from PRIN-PNRR2022 (P20224TM7Z) CUP: D53D23018780001 and from EU - Next Generation EU - PRIN2022 (202277N5H9) CUP: D53D23005670006.


\printbibliography

\end{document}